\documentclass[l
eqno,11pt]{amsart}

\usepackage{leftidx}
\usepackage{comment}
\usepackage{amsmath}
\usepackage{graphicx, color}
\usepackage{amscd}
\usepackage{amsxtra}
\usepackage{amsfonts}
\usepackage{amssymb}
\usepackage{mathrsfs}
\usepackage{mathtools}
\usepackage{ulem}
\usepackage{enumitem}
\usepackage[colorlinks=true,linkcolor=blue]{hyperref}
\usepackage{todonotes}
\usepackage{mathtools}

\setlength{\oddsidemargin}{-0.4cm}
\setlength{\evensidemargin}{-0.4cm}
 \setlength{\textwidth}{17.5cm} \setlength{\textheight}{21.5cm}
\setlength{\voffset}{-1cm}

\newtheorem{thm}{Theorem}[section]

\newtheorem{lem}[thm]{Lemma}

\newtheorem{defn}[thm]{Definition}
\newtheorem{rem}[thm]{Remark}
\newtheorem{exam}[thm]{Example}
\numberwithin{equation}{section}

\newcommand\be{\begin{equation}}
\newcommand\ee{\end{equation}}
\newcommand\R{\mathbb R}
\newcommand\N{\mathbb N}
\newcommand\rn{\mathbb R^n}
\newcommand\rnn{\mathbb R^{n+1}}
\newcommand\rnp{\mathbb R^{n+1}_+}
\newcommand\X{\mathbf {X}}
\newcommand\Y{\mathbf {Y}}
\newcommand\Z{\mathbf {Z}}
\newcommand\V{\mathbf {V}}

\newcommand{\n}{\nabla_2}
\newcommand{\tr}{\mathbf{Tr}\hspace{0.03cm} u}

\allowdisplaybreaks


\title[]
{Strongly nonlinear Robin problems for  harmonic and polyharmonic functions in the half-space
}

\author{Andrea Cianchi}
\address{Dipartimento di Matematica e Informatica \lq \lq U. Dini''\\
Universit\`a di Firenze\\  Viale Morgagni 67/A, 50134 Firenze,
Italy}
 \email{andrea.cianchi@unifi.it}

\author{Gael Y.  Diebou}
\address{Department of Mathematics\\University of Toronto\\ 40 St George Street, Toronto, ON, M5S 2E4, Canada.}
\email{gaely.diebou@utoronto.ca}

\author{Lenka Slav\'ikov\'a}

\address{Department of Mathematical Analysis
\\  Faculty of Mathematics and Physics,
Charles University
\\  Sokolovsk\'a~83,
186~75 Praha~8,
Czech Republic}
\email{slavikova@karlin.mff.cuni.cz}

\subjclass[2000]{35J25, 35J40.}
\keywords{Harmonic functions, Polyharmonic functions, Robin problems, Orlicz spaces}

\begin{document}

\begin{abstract} Existence and global regularity results for  boundary-value  problems of Robin type for harmonic and polyharmonic functions in $n$-dimensional half-spaces are offered. 
The Robin condition on the normal derivative on the boundary of the half-space is prescribed by a nonlinear function $\mathcal N$ of the relevant harmonic or polyharmonic functions. General Orlicz type growths for the function $\mathcal N$ are considered. For instance, functions $\mathcal N$ of classical power type, their perturbations by logarithmic factors, and exponential functions are allowed. New sharp boundedness properties in Orlicz spaces of some classical operators from harmonic analysis, of independent interest, are critical for our approach.
\end{abstract}

\maketitle


\section{Introduction}\label{intro}

We deal with the existence and global regularity of harmonic and polyharmonic functions in $n$-dimensional half-spaces, subject to general nonlinear Robin type boundary conditions. 

 The problems under consideration for harmonic functions have the form
\begin{align}\label{eq:main-eq}
		\begin{cases}
			\Delta u=0 & \quad \mbox{ in }\,\,\rnp\\
			-\dfrac{\partial u}{\partial x_{n+1}}
   = \mathcal N(u)+f & \quad \mbox{ on }\,\,\partial\rnp.
		\end{cases}
	\end{align}
 Here,    $\mathcal N: \mathbb R \to \mathbb R$ is a locally Lipschitz continuous function,  $f:\rn \to \mathbb R$ is a locally integrable function, $\rnp$ denotes the half-space  of those points in $\mathbb R^{n+1}$ whose last component is positive, and $n\geq 2$.

If $\mathcal N=0$, then \eqref{eq:main-eq} is a classical Neumann boundary value problem treated in \cite{Arm}. The case when $f= 0$ and $\mathcal N$ is a power-type nonlinearity of the form 
\begin{equation}\label{power}
\mathcal N(t)= |t|^{p-1}t  
\end{equation}
has also been investigated for subcritical ($p< (n+1)/n$) and critical  ($p=(n+1)/n$) exponents, see  \cite{Hu,QR} and the references  therein. 
In general, even for $\mathcal N$ as in \eqref{power}, the solvability of problem \eqref{eq:main-eq} is very much dependent on the inhomogeneous term $f$. Under suitable assumptions on the latter,  results on the existence, uniqueness, regularity, and qualitative properties of solutions are available in the literature \cite{DL, FMM, Di1}.

The punctum of the present contribution is that the function $\mathcal N$ need not have a polynomial growth. For instance, the exponential type behavior 
\begin{equation}\label{exp}
\mathcal N(t) \approx e^{t^\alpha} \quad \text{near infinity}
\end{equation}
for some $\alpha >0$ is included in our discussion, and was the original motivation for this work. Elliptic equations with exponential Robin boundary conditions arise in diverse areas. In Riemannian geometry, the problem of finding conformal metrics in the two dimensional half-space (and, more generally, on manifolds with boundary) with constant Gauss curvature and boundary geodesic curvature is related to the solvability of problems of the type \eqref{eq:main-eq}, 
 with $\mathcal N(t)=\lambda e^t$ for some $\lambda\in \R$ \cite{Ch,N,Zh}. Certain models in mathematical physics also call into play exponential Robin boundary conditions. They describe the corrosion and oxidation phenomena of materials in electrochemistry \cite{MV,VX}. 

The problem \eqref{eq:main-eq}, with
$$\mathcal N(t)=\lambda\, te^{t^2},$$
for some $\lambda>0$, is prototypical and can help  grasp the main features and difficulties of the general setting under consideration. It is the Euler-Lagrange equation of the energy functional 
\begin{align}\label{trace}
E_{\lambda}(u)=\dfrac{\lambda}{2}\int_{\rnp}|\nabla u|^2dxdx_{n+1}-\dfrac{\lambda}{2}\int_{\rn}e^{u^2}  -1\,dx.
\end{align}
This functional is well defined for $n=1$, thanks to a borderline trace embedding, which ensures that the integral over $\mathbb R^2$ on the right-hand side of \eqref{trace} converges for every $u\in W^{1,2}(\mathbb R^2)$.
As a consequence, variational methods apply. 
However, this is not guaranteed in higher dimensions, and classical direct methods of the calculus of variations fail. This drawback also surfaces when dealing with functions $\mathcal N$ with more general non-standard growths.

The approach to the problem \eqref{eq:main-eq}  adopted in this paper has a non-variational nature and is based on a combination of  potential theoretic techniques,  sharp non-standard  results from the theory of Sobolev type spaces, and fixed-point arguments. Novel developments on these methods in the framework of Orlicz spaces are a central step in our analysis.   


Our existence result on the problem \eqref{eq:main-eq} ensures that it admits a solution $u$, provided that the datum $f$ has a sufficiently small norm in a suitable space modeled on the function $\mathcal N$. 
The unconventional Sobolev type  space for the solution also depends on $\mathcal N$ and is equipped with a  norm which depends on both the gradient of functions in $\rnp$ and their trace  on $\partial\rnp$.
Due to the generality of the  admissible nonlinearities $\mathcal N$, the ambient spaces for $f$ and $u$ are naturally defined in terms of Orlicz norms. 

Stronger integrability properties of $f$ are shown to be reflected in a higher degree of integrability of the gradient of $u$ and of the trace of $u$ on $\partial\rnp$. Their integrability is again described in terms of finiteness of Orlicz norms. This is the content of our second result about the problem \eqref{eq:main-eq}.

Finally,  we prove that weak differentiability of the datum $f$ and its membership in an Orlicz-Sobolev space on $\rn$ endowed with a sufficiently strong norm guarantee that  $u$ is, in fact, a classical -- namely in $C^{\infty}(\rnp)\cap C^1(\overline{\rnp})$ -- solution to the problem \eqref{eq:main-eq}. The latter result also requires that $\mathcal N$ be differentiable, and the Orlicz-Sobolev space for $f$ is also related to the growth of the derivative $\mathcal N'$.

To give the flavour of the conclusions that can be deduced from these results, consider nonlinearities $\mathcal N$ with an exponential growth near infinity as in \eqref{exp}. Since the ambient space $\rnp$ has infinite measure, also the behaviour of $\mathcal N$ near zero is relevant. Assume, for instance, that $\mathcal N$ has a polynomial decay at zero, namely
\begin{align}
    \label{2024-404}
    \mathcal N(t)  =   |t|^{p-1} t\, e^{t^\alpha},
\end{align}
for some $p>n'$ and $\alpha >0$, where $n'=\frac n{n-1}$. As shown in Example \ref{ex3}, the problem \eqref{eq:main-eq} admits a weak solution $u$, provided that  
$$f \in L^{\frac n{p'}}(\rn)\cap L^n(\log L)^{n-1-\frac n\alpha}(\rn)$$ and has a sufficiently small norm. 
Here, $L^n(\log L)^{n-1-\frac n\alpha}(\rn)$ denotes a Zygmund space. Under the stronger Sobolev regularity assumption  that   $$
    f\in W^{1,\frac n{p'}}(\rn)\cap W^1L^n(\log L)^{n-1+\beta}(\rn),
$$  for some $\beta>0$, with a norm small enough, one has that  $u \in C^{\infty}(\rnp)\cap C^1(\overline{\rnp})$, an hence it is also a classical solution.  Observe that the intersection spaces for $f$ agree, up to equivalent norms, with suitable Orlicz spaces and Orlicz-Sobolev spaces.

The existence and regularity results about the problem \eqref{eq:main-eq}  outlined above admit analogs for parallel problems for polyharmonic equations. These problems  read:
\begin{align}\label{eq:gen-eq}
\begin{cases}
\Delta^{m} u=0& \quad \mbox{ in }\,\,\mathbb{R}^{n+1}_{+}\\
\dfrac{\partial  \Delta^{k}u}{\partial x_{n+1}}=0, \,\ k=0, \dots , m-2, & \quad \mbox{ on }\,\,\partial\mathbb{R}^{n+1}_{+}
\\ 
(-1)^{m}\dfrac{\partial \Delta^{m-1}u}{\partial x_{n+1}}=\mathcal N(u)+f   & \quad  \mbox{ on }\,\,\partial\mathbb{R}^{n+1}_{+}
\end{cases}
\end{align}
for $2m<n+1$, where $\Delta^{m}$ denotes the $m$-th order Laplace operator, obtained on iterating the Laplacian $m$ times. For power type nonlinearities $\mathcal N$ as in \eqref{power}, this problem can be regarded as the inhomogeneous counterpart of the higher-order boundary conformally invariant $Q$-curvature problem \cite{SX}.  In the range $\frac{n}{n-2m+1}<p<\infty$, and
for broad classes of boundary data $f$,
the solvability of problem \eqref{eq:gen-eq} follows as a special case of results of  \cite{Di}.  An analysis of the problem \eqref{eq:gen-eq} for non-polynomial nonlinearities $\mathcal N$ seems to be missing in the literature and is addressed in this paper.

The paper is structured as follows. The next section is devoted to notations and background from the theory of Young functions and Orlicz and 
Orlicz-Sobolev spaces. Our main results are stated in Section \ref{S:3}. The subsequent Section \ref{S:aux}
contains technical results about Sobolev conjugates of Young functions satisfying specific assumptions.  Boundedness properties of some classical operators from harmonic analysis, in Orlicz spaces, are established in Section \ref{S:4}. They are central in view of our approach, which makes substantial use of representation formulas via integral operators.
The proofs of the main results about  the problems \eqref{eq:main-eq}
and \eqref{eq:gen-eq}
are collected in Sections \ref{second-order} and \ref{higher-order}, respectively. Although problem \eqref{eq:main-eq} is a special case of \eqref{eq:gen-eq} (the condition on the second line being absent in this case), we prefer to offer a self-contained treatment of the former. It involves simpler notation, and 
readers who are just interested in second-order equations will find a proof that does not need a detour through the higher-order case. 

\section{Functional background}  

\subsection{Young functions}\label{SS:young}

The class of Orlicz spaces extends that of Lebesgue spaces in that the role of powers in the definition of the norm is played by more general Young functions. 
A
function $A: [0, \infty ) \to [0, \infty ]$ is called a Young
function if it is convex (non trivial), left-continuous and
vanishes at $0$. 
Any Young function takes the form
\begin{equation}\label{young}
A(t) = \int _0^t a(\tau ) \,\mathrm{d}\tau \qquad \hbox{for $t \geq
0$},
\end{equation}
for some non-decreasing, left-continuous function $a: [0, \infty )
\to [0, \infty ]$ which is neither identically equal to $0$ nor to
$\infty$. 
\\ The function
\begin{align}
    \label{2024-5}
    \frac{A(t)}{t} \quad \text{is non-decreasing.}
\end{align}
One has that
\begin{equation}\label{aA}
A(t) \leq  ta(t) \leq A(2t)\qquad \hbox{for $t
> 0$.}
\end{equation}
Moreover,
\begin{equation}\label{Ak}
\lambda A(t) \leq A(\lambda t) \quad \hbox{for $\lambda \geq 1$ and $t \geq0$.}
\end{equation}
The Young conjugate $\widetilde{A}$ of $A$  is defined by
$$\widetilde{A}(t) = \sup \{\tau t-A(\tau ):\,\tau \geq 0\} \qquad {\rm for}\qquad  t\geq 0\,.$$
Note the representation formula
\begin{equation}\label{youngconj}
\widetilde A(t) = \int _0^t a^{-1}(\tau ) d\tau \qquad \quad
\hbox{for $t \geq 0$},
\end{equation}
 where $a^{-1}$ denotes the (generalized) left-continuous inverse of the
function $a$ appearing in \eqref{young}. 
One can show that
\begin{equation}\label{AAtilde}
t \leq A^{-1}(t) \widetilde A^{-1}(t) \leq 2t \qquad \hbox{for $ t
\geq 0$,}
\end{equation}
where $A^{-1}$ and $\widetilde A^{-1}$ stand for the generalized
right-continuous inverses of $A$ and $\widetilde A$, respectively.
		\\
A Young function $A$ is said to satisfy the $\Delta_2$-condition --  briefly $A \in \Delta_2$ -- globally if there exists a constant $c$ such that
\begin{equation}\label{delta2}
A(2t) \leq c A(t) 
\end{equation}
 for $t \geq 0$.
 \\
The function $A$ is said to satisfy the $\nabla_2$-condition --  briefly $A \in \nabla_2$ -- globally if there exists a constant $c>2$ such that
\begin{equation}\label{nabla2}
A(2t) \geq c A(t) 
\end{equation}
for $t \geq 0$.
\\ One has that 
\begin{equation}\label{equivdelta2}
\text{$A \in \Delta_2$ if and only if there exists $p\geq 1$ such that $\sup_{t>0}\frac{ta(t)}{A(t)} \leq p$,}
\end{equation}
and
\begin{equation}\label{equivnabla2}
\text{$A \in \nabla_2$ if and only if there exists $p> 1$ such that $\inf_{t>0}\frac{ta(t)}{A(t)} \geq p$.}
\end{equation}
The $\Delta_2$-condition and the $\nabla_2$-condition  are said to be satisfied near infinity or near $0$ if there exists $t_0>0$ such that \eqref{delta2} or \eqref{nabla2} hold for $t\in (t_0, \infty)$ or for $t\in [0, t_0)$, respectively. Characterizations analogous to 
 \eqref{equivdelta2} and \eqref{equivnabla2} hold, with $\sup_{t>0}$ and $\inf_{t>0}$ replaced by $\sup$ and $\inf$ over the corresponding interval of values of $t$.

\begin{lem}\label{L:lemma_young}
Let $A$ be a Young function having the form \eqref{young} and let $K>1$.  
Then:
\\ (i) $A\in \Delta_2$ if and only if there exists a constant $c_1$ such that
\begin{equation}\label{E:delta2}
a(t) \geq \frac{1}{c_1}a(Kt) \quad \text{for} \quad t\geq 0.
\end{equation}
\\ (ii)  $A\in \nabla_2$ if and only if there exists a constant $c_2$ such that
\begin{equation}\label{E:nabla2}
a(t) \leq  \frac{1}{K}a(c_2t) \quad \text{for} \quad t\geq 0. 
\end{equation}
\end{lem}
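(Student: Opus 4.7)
The plan is to use the characterizations \eqref{equivdelta2} and \eqref{equivnabla2} of the $\Delta_2$ and $\nabla_2$ conditions in terms of the ratio $ta(t)/A(t)$, together with the sandwich inequality \eqref{aA}. For each of the four implications, the strategy is to convert between a pointwise bound on the density $a$ at two scales $t$ and $Kt$ (or $c_2t$) and a bound on the integral quantity $ta(t)/A(t)$, exploiting the monotonicity of $a$ to pass between integral and pointwise estimates.

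For the forward implication of (i), assuming $A \in \Delta_2$, I iterate \eqref{delta2} finitely many times (a number of iterations depending on $K$) to obtain a constant $C$ such that $A(2Kt) \leq C A(t)$. Combining $Kt \cdot a(Kt) \leq A(2Kt)$ and $A(t) \leq ta(t)$ from \eqref{aA} then yields $a(Kt) \leq (C/K)\,a(t)$, so \eqref{E:delta2} holds with $c_1=\max\{1,C/K\}$. For the converse, I bound $A(t)$ from below by restricting the integral to $[t/K,t]$ and applying the hypothesis at scale $t/K$ in the form $a(t/K)\geq a(t)/c_1$; this yields $A(t) \geq \frac{K-1}{Kc_1}\,ta(t)$, so $ta(t)/A(t)$ is uniformly bounded and $A\in\Delta_2$ by \eqref{equivdelta2}.

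For the forward implication of (ii), I invoke \eqref{equivnabla2} in the equivalent differential form $a(t)/A(t)\geq p/t$ for some $p>1$; integrating $(\log A)'$ yields the polynomial lower bound $A(\lambda t) \geq \lambda^p A(t)$ for all $\lambda \geq 1$. Applying it with $\lambda=c_2/2$ (assuming $c_2\geq 2$) and combining with $A(2t)\geq ta(t)$ from \eqref{aA} gives
$$c_2 t\, a(c_2 t) \;\geq\; p\,A(c_2 t) \;\geq\; p(c_2/2)^p A(2t) \;\geq\; p(c_2/2)^p\, ta(t),$$
hence $a(c_2 t) \geq p\,2^{-p}\,c_2^{p-1}\,a(t)$. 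Since $p>1$, the coefficient $p\,2^{-p}\,c_2^{p-1}$ exceeds $K$ once $c_2$ is chosen large enough, establishing \eqref{E:nabla2}.

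The converse of (ii) is where I expect the main obstacle. Starting from the hypothesis $a(s)\leq K^{-1}a(c_2 s)$, iteration yields $a(t/c_2^k)\leq K^{-k}a(t)$ for every $k\in\N$. I will then decompose $A(t)$ along the geometric partition $\bigcup_{k\geq 0}[t/c_2^{k+1},\,t/c_2^k]$ and bound $a$ on each piece by its value at the right endpoint, obtaining
$$A(t) \;\leq\; \bigl(1-c_2^{-1}\bigr)\,ta(t)\sum_{k=0}^{\infty}(Kc_2)^{-k} \;=\; \frac{K(c_2-1)}{Kc_2-1}\,ta(t),$$
where the geometric series converges because $Kc_2>1$. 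The multiplicative constant $\frac{K(c_2-1)}{Kc_2-1}$ is strictly less than $1$ precisely because $K>1$, and its reciprocal is a number $p>1$ with $ta(t)\geq p A(t)$; then \eqref{equivnabla2} delivers $A\in\nabla_2$. The real subtlety throughout is the bookkeeping of constants that guarantees the strict inequalities ($c_1\geq 1$ in \eqref{E:delta2}, and, most importantly, $p>1$ in the $\nabla_2$ characterization); it is this strictness that forces the geometric-sum argument in the final step, instead of a one-shot comparison between two scales.
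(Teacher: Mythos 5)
Your proof is correct, but it takes a genuinely different route from the paper, particularly in Part~(ii). The paper's argument for Part~(ii) is short and proceeds by duality: it observes that $A\in\nabla_2$ if and only if $\widetilde A\in\Delta_2$, notes from~\eqref{youngconj} that the density of $\widetilde A$ is $a^{-1}$, applies Part~(i) to $\widetilde A$ to get $a^{-1}(Kt)\leq c_2\,a^{-1}(t)$, and then asserts that this is equivalent to~\eqref{E:nabla2} by passing back through inverses. Your approach avoids the Young conjugate entirely and argues directly from the ratio characterizations~\eqref{equivdelta2}--\eqref{equivnabla2}: for the forward direction you integrate the differential form $a(t)/A(t)\geq p/t$ to obtain the polynomial lower bound $A(\lambda t)\geq\lambda^pA(t)$ and feed this back through~\eqref{aA}; for the converse you decompose $A(t)$ along the geometric partition $[t/c_2^{k+1},t/c_2^k]$, iterate the hypothesis to get $a(t/c_2^k)\leq K^{-k}a(t)$, sum the geometric series, and observe that the resulting constant $\frac{K(c_2-1)}{Kc_2-1}$ is strictly below $1$ precisely because $K>1$. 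Both proofs are valid; yours is more self-contained and elementary (no appeal to the $\Delta_2/\nabla_2$ conjugate duality or to manipulations with generalized inverses), at the cost of being longer and requiring the geometric-sum bookkeeping. One small point worth making explicit: the hypothesis~\eqref{E:nabla2} automatically forces $c_2>1$, since $c_2\leq1$ together with the monotonicity of $a$ and $K>1$ would give $a(t)\leq\frac1K a(c_2t)\leq\frac1K a(t)$ and hence $a\equiv0$, which is excluded; this is what licenses the geometric partition of $(0,t]$.
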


\begin{proof}
Part (i) is well-known, and easily verified, thanks to equations \eqref{young} and \eqref{aA}.
As for Part (ii), one has that $A \in \nabla_2$ if and only if $\widetilde A \in \Delta_2$. Thus, by the formula \eqref{youngconj} and Part (i),
 $A \in \nabla_2$ if and only if $a^{-1}(Kt) \leq c_2 a^{-1}(t)$ for some constant $c_2>0$. 
The latter condition is
 in turn equivalent to~\eqref{E:nabla2}.
\end{proof}

\par\noindent
A  Young function $A$ is said to dominate another Young function $B$
globally   if there exists a positive constant $c$  such that
\begin{equation}\label{B.5bis}
B(t)\leq A(c t) 
\end{equation}
for $t \geq 0$. The function $A$ is said to dominate $B$ near infinity if there
exists $t_0\geq 0$ such that \eqref{B.5bis} holds for $t \geq t_0$. If $A$ and $B$ dominate each other globally [near infinity], then they are called equivalent globally [near infinity]. Equivalence in this sense will be denoted by
$$A \simeq B.$$
This terminology and notation will also be adopted for merely nonnegative functions, which are not necessarily Young functions.
\\
The global upper and lower
Matuszewska-Orlicz indices $I(A)$ and $i(A)$ of a (non-necessarily  Young) function $A$, which is  strictly positive and finite-valued in $(0,\infty)$, can be defined as
\begin{align}\label{index}
I(A)=\lim_{\lambda \to \infty} \frac{\log \left(\displaystyle\sup_{t>0} \frac{A(\lambda t)}{A(t)}\right)}{\log \lambda} \quad \text{and}
\quad
i(A)=\lim_{\lambda \to 0^+} \frac{\log \left(\displaystyle\sup_{t>0} \frac{A(\lambda t)}{A(t)}\right)}{\log \lambda}.
\end{align}
The indices $I_0(A)$ and $i_0(A)$ near zero are given by
\begin{align}\label{index0}
I_0(A)=\lim_{\lambda \to \infty} \frac{\log \left(\displaystyle\limsup_{t\to 0^+} \frac{A(\lambda t)}{A(t)}\right)}{\log \lambda} \quad \text{and}
\quad
i_0(A)=\lim_{\lambda \to 0^+} \frac{\log \left(\displaystyle\limsup_{t\to 0^+} \frac{A(\lambda t)}{A(t)}\right)}{\log \lambda}.
\end{align}


The optimal $n$-dimensional Sobolev conjugate $A_{\frac n\alpha}$, of order $\alpha \in (0,n)$, of a Young function $A$
has a central role in our results. 
 It was introduced  in \cite{Cia_CPDE} (and in \cite{Cia_IUMJ} in an equivalent form) for $\alpha =1$ and in \cite{Cia_Forum} for $\alpha \in \N$; its role in embeddings for fractional Orlicz-Sobolev spaces for non-integer $\alpha$ was discovered in \cite{ACPS1}.
  The function $A_{\frac n\alpha}$ is defined as follows.
 Assume that
\begin{equation}\label{conv-alpha}
\int _0\bigg(\frac t{A (t)}\bigg)^{\frac {\alpha}{n-\alpha}}\, dt <
\infty.
\end{equation}
Let 
$H_{A,\frac{n}{\alpha}} : [0, \infty ) \to [0, \infty)$ be the function given by 
\begin{equation}\label{H-alpha}
 H_{A,\frac{n}{\alpha}}(t)= \bigg(\int _0^t \bigg(\frac \tau{A(\tau)}\bigg)^{\frac {\alpha}{n-\alpha}}\, d\tau\bigg)^{\frac {n-\alpha}{n}} \quad \hbox{for $t \geq 0$.}
\end{equation}
Then the Sobolev conjugate $A_{\frac{n}{\alpha}}$ obeys: 
\begin{equation}\label{A-alpha}
A_{\frac{n}{\alpha}} (t)= A (H_{A,\frac{n}{\alpha}}^{-1}(t)) \quad \hbox{for $t \geq 0$.}
\end{equation}
Note that $H_{A,\frac n\alpha}$, and hence $A_{\frac n\alpha}$, are finite-valued provided that
\begin{equation}\label{divinf}
\int^\infty \bigg(\frac t{A (t)}\bigg)^{\frac \alpha{n-\alpha}}\, dt =
\infty.
\end{equation}
On the other hand, if 
\begin{equation}\label{convinf}
\int^\infty \bigg(\frac t{A (t)}\bigg)^{\frac \alpha{n-\alpha}}\, dt <
\infty,
\end{equation} 
then
\begin{align}\label{H-1inf}
    H_{A,\frac n\alpha}^{-1}(t) =\infty \quad \text{for} \quad t>  \bigg(\int _0^\infty \bigg(\frac \tau{A(\tau)}\bigg)^{\frac {\alpha}{n-\alpha}}\, d\tau\bigg)^{\frac {n-\alpha}{n}},
\end{align}
whence 
\begin{align}\label{Aninf}
A_{\frac n\alpha}(t)=\infty
\end{align}
for the same values of $t$.

Finally, we associate with a Young function $A$ the Young function $A_\diamond : [0, \infty) \to [0, \infty)$ 
 given by
\begin{equation}\label{G}
A_\diamond (t)= { A(t)^{\frac{n+1}n}} \qquad \text{for $t \geq 0$.}
\end{equation}
The Young function $A_\diamond$ arises in connection with the study of boundedness properties of the Poisson extension operator, whose definition is recalled in Section~\ref{S:4} below.


\subsection{Function spaces}\label{SS:spaces}

Let $\Omega$ be a measurable set in $\rn$.
The Orlicz space $L^A (\Omega)$, associated with a Young function $A$,
 is the Banach function
space of those   measurable functions $u : \Omega \to \mathbb R$
which render the
 Luxemburg norm
\begin{equation}\label{lux}
 \|u\|_{L^A(\Omega )}= \inf \left\{ \lambda >0 :  \int_{\Omega }A
\left( \frac{|u|}{\lambda} \right) dx  \leq 1 \right\}\,
\end{equation}
finite. In particular, $L^A (\Omega)= L^p (\Omega)$ if $A(t)= t^p$ for some
$p \in [1, \infty )$, and $L^A (\Omega)= L^\infty (\Omega)$ if $A(t)=0$ for
$t\in [0, 1]$ and $A(t) = \infty$ for $t>1$.
\\  For some steps of our proofs, we shall need to consider the functional \eqref{lux} associated with a non-decreasing and left-continuous function $A: [0, \infty) \to [0, \infty)$ which is not necessarily convex. 
If $A$ is a function of this kind, then the functional $\|u\|_{L^A(\Omega )}$ is still positively homogeneous, although it need not be a norm. The set of functions $u$ for which $\|u\|_{L^A(\Omega )}<\infty$ will be still denoted by $L^A(\Omega )$. In the rest of this section, unless otherwise stated, capital letters $A$, $B$, etc. denote Young functions. It will be explicitly mentioned if they are allowed to be just non-decreasing and left-continuous.
  \\
 The H\"older type inequality
\begin{equation}\label{holder}
\int _\Omega |u v|\,dx  \leq 2\|u\|_{L^A(\Omega )}
\|v\|_{L^{\widetilde A}(\Omega )}
\end{equation}
holds for every $u \in L^A(\Omega )$ and $v\in L^{\widetilde A}(\Omega )$.
\\ More generally,  if $A$, $B$ are non-decreasing and left-continuous functions, and $C$ is a Young function such that
\begin{equation}\label{ABC}
A^{-1}(t)B^{-1}(t) \leq k C^{-1}(t) \quad \text{for $t \geq 0$,}
\end{equation}
for some constant $k$, then 
\begin{equation}\label{holdergen}
\|uv\|_{L^C(\Omega )}  \leq 2k \|u\|_{L^A(\Omega )}
\|v\|_{L^{B}(\Omega )}
\end{equation}
for every $u \in L^A(\Omega )$ and $v \in L^B(\Omega )$.
  If the inequality \eqref{ABC} only holds for $0\leq t \leq t_0$, for some $t_0>0$, then the inequality \eqref{holdergen} holds under the additional assumption that $\|u\|_{L^\infty(\Omega)} < A^{-1}(t_0)$ and $\|v\|_{L^\infty(\Omega)} < B^{-1}(t_0)$. The inequality \eqref{holdergen} is stated in \cite{oneil} in the case when $A$, $B$ and $C$ are Young functions and \eqref{ABC} holds, with $k=1$, for every $t \geq 0$. The variants appearing here can be verified   via a close inspection of the proof of  \cite{oneil}.
\par\noindent If $A$ dominates $B$ globally, then
\begin{equation}\label{normineq}
\|u \|_{L^B(\Omega )} \leq c \|u \|_{L^A(\Omega)}
\end{equation}
for every $u \in L^A(\Omega )$, where $c$ is the same constant as in
\eqref{B.5bis}. 

Assume now that $\Omega$ is an open set and let $k\in \mathbb N$. We  set
$$C^k(\Omega) = \{  u\in C(\Omega): 
\hbox{$u$ has  continuous bounded derivatives up to the order $k$}\}.$$
Classically, $C^k(\Omega)$ is a Banach space, endowed with the norm 
$$\|u\|_{C^k(\Omega)}= \sum_{m=0}^k\|D ^m u\|_{C(\Omega)}.$$
Here, $D^mu$ denotes the vector of all partial derivatives of $u$ of order $m$. When $m=1$ we also use the simplified notation $Du$ for $D^1u$. Also, $D^0u$ stands for $u$.
\\  The notation $C^k_c(\Omega)$ is adopted  for the subspace of those functions in $C^k(\Omega)$ which are compactly supported in $\Omega$. 
\\   The space $C^k(\overline \Omega)$ consists of  the restriction to  $\overline \Omega$ of functions in $C^k(\Omega')$ for some open set $\Omega'\supset \overline \Omega$. Accordingly, $C^k_c(\overline \Omega)$ denotes the set of   functions in $C^k(\overline \Omega)$  whose support is bounded. 

The Orlicz-Sobolev space $W^{1,A}(\Omega)$ is defined as
\begin{align}\label{w1a}
W^{1,A}(\Omega) = \{  u \in L^A(\Omega) :\ 
\hbox{$u$ is  weakly differentiable and $|D u|\in L^A(\Omega)$}\},
 \end{align}
and is  a Banach space endowed with the norm
\begin{equation}\label{w1anorm}
\|u\|_{W^{1,A}(\Omega)} =  \|u \|_{L^A(\Omega)} + \|D u \|_{L^A(\Omega)}.
\end{equation}
Here, and in what follows, we use the notation $\|D u \|_{L^A(\Omega)}$ as a shorthand for $\|\, |D u|\, \|_{L^A(\Omega)}$.
\\  The homogeneous Orlicz-Sobolev space $V^{1,A}(\Omega)$ is instead defined as
\begin{align}\label{v1a}
V^{1,A}(\Omega) = \{  u:\ 
\hbox{$u$ is  weakly differentiable and $|D u|\in L^A(\Omega)$}\},
 \end{align}
and is  equipped  with the seminorm
\begin{equation}\label{v1anorm}
\|u\|_{V^{1,A}(\Omega)} =  \|D u \|_{L^A(\Omega)}.
\end{equation}
 In the case when $L^A(\Omega) =L^p(\Omega)$ for some $p \in [1, \infty)$, the definitions above reproduce the classical Sobolev space $W^{1,p}(\Omega)$ and its homogeneous counterpart 
$V^{1,p}(\Omega)$. 
\\ 
An embedding theorem for the space $V^{1,A}(\rn)$ reads as follows \cite{Cia_IUMJ, Cia_CPDE}. Assume that $A$ fulfills the condition   \eqref{conv-alpha} with $\alpha=1$ and let $A_n$ be defined by  \eqref{A-alpha} with $\alpha=1$.
Then, there exists a constant $c=c(n)$ such that
\begin{equation}\label{emb}
 \|u \|_{L^{A_n}(\rn )} \leq c\|\nabla u \|_{L^A(\rn )}
\end{equation}
for every function $u\in V^{1,A}(\rn)$ such that $|\{|u|>t\}|<\infty$ for every $t>0$. Hence,  the inequality \eqref{emb} holds for every $u \in W^{1,A}(\rn)$.
\\
In particular, if $A$ grows so fast near infinity for the condition \eqref{convinf} to be satisfied, then, owing to \eqref{Aninf},
\begin{equation}\label{embinf}
 \|u \|_{L^{\infty}(\rn )} \leq c\|\nabla u \|_{L^A(\rn )}
\end{equation}
for some constant $c=c(n,A)$ and every function $u$ as in \eqref{emb}. Moreover, \begin{align}\label{cont}
    u\in C(\rn).
\end{align}
 We  also need to introduce Orlicz-Sobolev  spaces of functions on $\rnp$, which decay near infinity, and whose trace on 
 $\partial \rnp$ belongs to a given Orlicz space.
 To this purpose, we begin by setting 
\begin{align}\label{abuse}
\overset{\circ} V {}^{1,A}( \rnp) = \big\{u_{{\big| \rnp}}:& \,\text{$u$ belongs to the closure of $C^\infty_c(\rnn)$} \\ \nonumber  & \text{ with respect to the seminorm in $V^{1,A}(\rnn)$}\big\}.
\end{align}
In particular, the trace   $\tr  \in L^1_{\rm loc} (\rn)$ over $\rn$ of every function $u \in \overset{\circ} V {}^{1,A}(\rnp)$ is well defined for every Young function $A$. This is a consequence of the fact that  $u \in V^{1,A}(\mathbb R^{n+1})\subset V^{1,1}_{\rm loc}(\mathbb R^{n+1})$ and $\tr $ is well defined as a function in $ L^1_{\rm loc} ( \rn)$ for any $u \in V^{1,1}_{\rm loc}(\mathbb R^{n+1})$. Notice that, since
$$\partial \rnp = \rn \times \{0\} \approx \rn,$$
here, and in what follows, 
    $\partial \rnp$ is identified with   $\rn$.
\\  Given two Young functions $A$ and $B$, we define the space
\begin{equation}\label{V1AB}
 V^{1,(A,B)}(\rnp, \rn)=   \big\{u \in \overset{\circ} V {}^{1,A}( \rnp):   \,\,\tr \in L^B  (\rn)\big\}
\end{equation}
endowed with the norm
\begin{equation}\label{normV1AB}
\|u\|_{  V ^{1,(A,B)}(\rnp, \rn)}=    \|\nabla u\|_{L^A(\rnp)}+ \|\tr \|_{L^B(\rn)}.
\end{equation}

Higher-order 
Orlicz-Sobolev spaces  come into play to deal with 
 polyharmonic problems.  They require the use of spaces defined in terms of $k$-th order gradients of a function $u$ defined as
\begin{equation}\label{higher-der}
\nabla^k u=\begin{cases}
\Delta^{k/2}u &\mbox{if } k \mbox{ is even}  \\
\nabla\Delta^{\frac{k-1}{2}} u &\mbox{if } k \mbox{ is odd}.
\end{cases}    
\end{equation}
Notice that  $\nabla^k u$ differs from $D^ku$, as defined above.  In particular, 
 $\nabla^k u \in \R$ if $k$ is even, whereas $\nabla^k u \in \R^n$ if $k$ is odd.
\\
The homogeneous Orlicz-Sobolev space $V^{k,A}(\Omega)$ is defined as
\begin{align}\label{vka}
V^{k,A}(\Omega) = \{  u:\ 
\hbox{$\nabla^{k}u$ exists in the weak sense and $|\nabla^k u|\in L^A(\Omega)$}\},
 \end{align}
and is  equipped  with the seminorm
\begin{equation}\label{vkanorm}
\|u\|_{V^{k,A}(\Omega)} =  \|\nabla^k u \|_{L^A(\Omega)}.
\end{equation}
 In analogy with \eqref{abuse},
 we set
 \begin{align}\label{kabuse}
\overset{\circ} V {}^{k,A}( \rnp) = \big\{u_{{\big| \rnp}}:& \,\text{$u$ belongs to the closure of $C^\infty_0(\rnn)$} \\ & \nonumber \text{ with respect to the seminorm in $V^{k,A}(\rnn)$}\big\}.
\end{align}
A similar argument as in the first-order case shows that the trace   $\tr  \in L^1_{\rm loc} (\rn)$  of every function $u \in \overset{\circ} V {}^{k,A}(\rnp)$ is well defined for every $k \in \N$ and every Young function $A$.
\\ Furthermore,
given two Young functions $A$ and $B$, we define the space
\begin{equation}\label{VkAB}
 V^{k,(A,B)}(\rnp, \rn)=   \big\{u \in \overset{\circ} V {}^{k,A}( \rnp):   \,\,\tr \in L^B  (\rn)\big\}
\end{equation}
endowed with the norm
\begin{equation}\label{normVkAB}
\|u\|_{  V ^{k,(A,B)}(\rnp, \rn)}=    \|\nabla^k u\|_{L^A(\rnp)}+ \|\tr \|_{L^B(\rn)}.
\end{equation}
  In what follows, when there is no ambiguity,  the value of the trace  $\tr$ of a function $u$ defined in $\rnp$ at a point $x \in \rn$ will simply be denoted by $u(x,0)$.

\section{Main results}\label{S:3}

Our main results admit a unified formulation for  problems \eqref{eq:gen-eq} of arbitrary order $m$. Because of the importance of second-order problems and of the ease of the corresponding notation, we enucleate the pertaining statements in  Subsection \ref{S:second}. The higher-order case is discussed in the subsequent Subsection \ref{S:higher}.

\subsection{Harmonic functions}\label{S:second}

In order to formulate the assumptions in  our  main results we need to introduce some notation and definitions.
\\
Let $\mathcal N: \mathbb R \rightarrow \mathbb R$ be the function appearing in  the problem~\eqref{eq:main-eq}. We  
assume that
there exists a finite-valued Young function $A$ such that 
\begin{equation}\label{eq:N1}
 |\mathcal N(t)| \leq A(|t|^{n'})  \quad \text{for $t \in \R$,}
\end{equation}
and 
\begin{equation}\label{eq:N2}
|\mathcal N(t)-\mathcal N(s)|\leq c|t-s|\bigg(\frac{A( |\theta t|^{n'})}{|t|} +\frac{A( |\theta s|^{n'})} {|s|}\bigg) \quad  \text{for $t ,s \in \R$,}
\end{equation}
for some positive constants $c$ and $\theta$. Here, and in similar occurrences in what follows, the function $\frac{A(t^{n'})}{t}$ is extended as $0$ by continuity for $t=0$. 
For simplicity of notation, we denote this function by $D$. Namely,  $D: [0,\infty) \rightarrow [0,\infty)$ is given by 
\begin{equation}\label{D}
 D(t)= \begin{cases}\displaystyle \frac{A(t^{n'})}{t} & \quad  \text{if $t>0$}
\\
0 & \quad  \text{if $t=0$.}
\end{cases}
\end{equation}
Next, 
define the   finite-valued Young function $E$ as 
\begin{equation}\label{E}
E(t) = \int_0^{A^{-1}(t)} a(s)^n\, ds  \qquad \text{for $t \geq 0$,}
\end{equation}
where $a$ denotes the function from equation \eqref{young}. 
The conditions   \eqref{conv-alpha} and \eqref{divinf} are fulfilled with  $\alpha=1$ and   $A$ replaced with $E$. This is proved in  Lemma \ref{Enequiv}, Section \ref{S:aux}. In that lemma, the Sobolev conjugate  $E_n$ of $E$, defined as in   \eqref{A-alpha} with $\alpha=1$, is also shown to obey:
\begin{equation}\label{nov1}
E_n(t) \simeq  \int_0^{t^{n'}} a(s)^n\, ds  \qquad \text{for $t \geq 0$.}
\end{equation}
Finally, let $E_\diamond$ 
 be the Young function associated with $E$ as in~\eqref{G}. Namely,
 \begin{align}
     \label{2024-260}
     E_\diamond (t) = E(t)^{\frac{n+1}{n}} \qquad \text{for $t \geq 0$.}
 \end{align}
%
%
%
\\ The functions $E_\diamond$ and $E_n$ provide us with the appropriate Orlicz-Sobolev-trace ambient space for weak solutions to the problem \eqref{eq:main-eq}.

\begin{defn}\label{def:weak-sol1} {\rm\bf{ [Weak solutions to problem \eqref{eq:main-eq}]}} 
Assume that $f\in L^{1}_{loc}(\rn)$ and  the condition \eqref{eq:N1} is fulfilled for some Young function $A$. Let $E$, $E_n$ and $E_\diamond$ be the functions given by \eqref{E}, \eqref{nov1}, and \eqref{2024-260}. We say that a function $u\in V^{1,(E_\diamond,E_n)}({\rnp}, \rn)$ is a weak solution to the problem \eqref{eq:main-eq} if  
\begin{align*}
\int_{\rnp}\nabla u\cdot\nabla \varphi \, dxdx_{n+1}- \int_{\rn}(\mathcal N(u)(x,0)+f(x))\varphi(x,0)\, dx=0
\end{align*}
for every $\varphi\in C^{\infty}_c\big(\overline{\rnp}\big)$.
\end{defn}

Our existence result for weak solutions to the problem \eqref{eq:main-eq} reads as follows.
 
\begin{thm}\label{thm1} {\rm\bf{ [Existence of weak solutions to problem \eqref{eq:main-eq}]}}
Assume that   $\mathcal N$ satisfies the assumptions  \eqref{eq:N1}  and \eqref{eq:N2} for some Young function $A$ such that
\begin{equation}\label{A} \tfrac 1{c}a(A^{-1}(2t)) \leq a(A^{-1}(t))\leq \tfrac 12 a(A^{-1}(ct)) \,\,\, \text{for \,$t \geq 0$,}
\end{equation}
and for some constant $c>0$.  Let $E$, $E_n$ and $E_\diamond$ be the functions given by \eqref{E}, \eqref{nov1}, and \eqref{2024-260}. Then, there exist  constants $\sigma _0$ and $c_0$ such that if $$\|f\|_{L^{E}(\rn)}\leq \sigma_0,$$ then the problem \eqref{eq:main-eq} admits a weak  solution 
$u \in C^\infty(\rnp) \cap V^{1,(E_\diamond,E_n)}({\rnp}, \rn)$ fulfilling 
\begin{equation}\label{nov4}
 \|u\|_{V^{1,(E_\diamond ,E_n)}({\rnp},  \rn)} \leq \sigma_0 c_0.
\end{equation}
Assume, in addition, that 
$\mathcal N(t)>0$ for $t>0$.  If $f >0$  a.e., then $u>0$ as well.
\end{thm}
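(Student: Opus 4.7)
The plan is to recast \eqref{eq:main-eq} as a fixed-point equation on a small ball in $V^{1,(E_\diamond,E_n)}(\rnp,\rn)$ and invoke the Banach contraction principle. The two ingredients are: (a) a continuous linear solution operator $\mathcal T$ for the Neumann-type problem $\Delta u=0$ in $\rnp$, $-\partial_{x_{n+1}}u=g$ on $\partial\rnp$, mapping $L^E(\rn)$ into the ambient Orlicz-Sobolev-trace space; and (b) quantitative control of the Nemytskii map $v\mapsto \mathcal N(v(\cdot,0))$ from $L^{E_n}(\rn)$ into $L^E(\rn)$ provided by \eqref{eq:N1}--\eqref{eq:N2} and the very definition of $E$. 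Assumption \eqref{A}, via Lemma \ref{L:lemma_young}, will be used to ensure that the Young functions $E$, $E_n$, $E_\diamond$ satisfy both $\Delta_2$ and $\nabla_2$, which is what converts the modular inequalities into uniform norm estimates.

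\textbf{Linear operator.} For $g\in L^E(\rn)$ I set $\mathcal Tg(x,x_{n+1})=\int_{\rn}K(x-y,x_{n+1})\,g(y)\,dy$ for the standard Neumann-Poisson kernel $K$ on the half-space; then $\mathcal Tg$ is harmonic in $\rnp$ and has Neumann data $g$ on $\partial\rnp$. Its trace is, up to constants, $I_1 g$ (the Riesz potential of order $1$), while $\nabla \mathcal Tg$ is controlled by the Poisson extension of $g$. Granting the Orlicz mapping properties of $I_1$ and the Poisson extension proved in Section~\ref{S:4}, one obtains
\[
\|\mathcal T g\|_{V^{1,(E_\diamond,E_n)}(\rnp,\rn)}
=\|\nabla\mathcal Tg\|_{L^{E_\diamond}(\rnp)}+\|(\mathcal Tg)(\cdot,0)\|_{L^{E_n}(\rn)}
\le C\,\|g\|_{L^E(\rn)}.
\]

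\textbf{Nonlinear estimates and fixed point.} I then define $\mathcal F(v):=\mathcal T\bigl(\mathcal N(v(\cdot,0))+f\bigr)$ on the closed ball $B_R\subset V^{1,(E_\diamond,E_n)}(\rnp,\rn)$. Growth: by \eqref{eq:N1} we have pointwise $|\mathcal N(v(\cdot,0))|\le A(|v(\cdot,0)|^{n'})$, and the definition \eqref{E}--\eqref{nov1} is precisely arranged so that $E\circ A(t^{n'})\simeq E_n(t)$; inserting this into the Luxemburg functional and using the $\Delta_2$ character of $E,E_n$ gives
\[
\|\mathcal N(v(\cdot,0))\|_{L^E(\rn)}\le \omega\bigl(\|v(\cdot,0)\|_{L^{E_n}(\rn)}\bigr),
\]
for a continuous modulus $\omega$ with $\omega(0)=0$. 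Contraction: by \eqref{eq:N2} and the definition of $D$ in \eqref{D},
\[
|\mathcal N(v_1)-\mathcal N(v_2)|\le c\,|v_1-v_2|\bigl(D(|\theta v_1|)+D(|\theta v_2|)\bigr);
\]
applying the generalized Hölder inequality \eqref{holdergen} to the right-hand side with $C=E$, $A$ being the target of $v_1-v_2$, and $B$ the Young function associated to $D$ -- whose interaction with $A$ is governed by \eqref{ABC} thanks to the identity $t\,D(t)=A(t^{n'})$ -- yields
\[
\|\mathcal N(v_1(\cdot,0))-\mathcal N(v_2(\cdot,0))\|_{L^E(\rn)}\le \kappa(R)\,\|v_1-v_2\|_{V^{1,(E_\diamond,E_n)}(\rnp,\rn)},
\]
with $\kappa(R)\to 0$ as $R\to 0^+$. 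Choosing $R$ small first and then $\sigma_0=\|f\|_{L^E}$ even smaller makes $\mathcal F$ a contraction of $B_R$ into itself; the Banach fixed-point theorem produces $u\in B_R$ with $u=\mathcal F(u)$, whence \eqref{nov4}. Harmonicity in $\rnp$ is immediate from the representation formula, so $u\in C^\infty(\rnp)$, and $u$ is a weak solution because $\mathcal T$ reproduces the Neumann data by construction. For the final assertion, I would iterate from $u_0\equiv 0$: since the kernel $K$ is nonnegative and $\mathcal N(t)>0$ for $t>0$, $f>0$ a.e.\ inductively forces each $u_k$ to be positive, and the positivity transfers to the limit $u$.

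\textbf{Main obstacle.} The delicate point is Step 3: turning the pointwise estimate \eqref{eq:N2}, which is not of product-Lipschitz type because of the nonlinear factor $D(|\theta v|)$, into a genuine Lipschitz estimate in the correct Orlicz norms with contraction constant $\to 0$ as $R\to 0$. This requires (i) identifying the Young function naturally associated with $D$, (ii) verifying the compatibility condition \eqref{ABC} between that function, $E_n$ and $E$ using only that $E(A(t^{n'}))\simeq E_n(t)$, and (iii) exploiting \eqref{A} to convert Luxemburg-norm bounds between these three spaces into genuinely small factors on small balls -- here \eqref{A} plays the role that $\Delta_2\cap\nabla_2$ plays in the power-type theory. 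The rest of the argument (linear bounds, harmonicity, positivity via Perron-type iteration) is then standard modulo the Orlicz estimates furnished by Sections \ref{S:aux} and \ref{S:4}.
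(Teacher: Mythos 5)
Your proposal follows the same strategy as the paper: define $\mathcal L u = T_1(\mathcal N(u)+f)$ (your $\mathcal T$ and $\mathcal F$ coincide with the paper's $T_1$ and $\mathscr L$), establish the linear Orlicz bounds via the Riesz potential and Poisson extension, turn \eqref{eq:N1}--\eqref{eq:N2} into small-ball estimates in $L^E(\rn)$ through the generalized H\"older inequality against the Young function built from $D$, apply Banach's fixed-point theorem, and obtain positivity from the Picard iteration $u_{j+1}=T_1(\mathcal N(u_j))+T_1f$. One caveat: for the growth estimate you suggest that $E(A(t^{n'}))\simeq E_n(t)$ alone yields the modulus bound, but a direct substitution into the Luxemburg functional does not give smallness because $A$ is superlinear -- the paper needs the same multiplicative decomposition $A(t^{n'})=t\,D(t)$ plus H\"older (Lemma~\ref{lem:L:B-norm}) and the vanishing of $\|D(\theta|u|)\|_{L^{B_{E,n}}}$ (Lemma~\ref{L:lim0}) already at the self-mapping step, which is exactly the machinery you correctly flag as the crux in your ``Main obstacle'' paragraph.
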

%

The next theorem  amounts to a Sobolev regularity result for the solution $u$ to the problem \eqref{eq:main-eq} under a stronger integrability assumption on the datum $f$. It tells us that the membership of $f$ in an Orlicz space whose norm is stronger, in a qualified sense, than $L^E(\rn)$ 
is reflected into
 stronger integrability properties of the derivatives of $u$. 
\\
The Orlicz ambient space for the datum $f$ is described via a Young function 
 $F$ such that 
\begin{equation}\label{E:overlineE}
\int_{0} \left(\frac{t}{F(t)}\right)^{\frac{1}{n-1}}\,dt<\infty. 
\end{equation}
The relevant Orlicz space is associated with the Young function $E\vee F$, given by
$$(E\vee F) (t) =\max\{E(t), F(t)\} \qquad \text{for $t\geq 0$.}$$
Note that
$$L^{E\vee F}(\rn)= L^E(\rn) \cap L^F(\rn).$$
The   Sobolev conjugate $F_n$ of $F$, defined as in  \eqref{A-alpha} with $\alpha=1$, and the function $F_\diamond$ associated with $F$ as in~\eqref{G}, also  play a role. 

\begin{thm}\label{thm2}{\rm\bf{ [Sobolev regularity of solutions to problem \eqref{eq:main-eq}]}}
Assume that $A$, $E$ and $\sigma _0$ are as in Theorem~\ref{thm1}. Let $F$ be a Young function satisfying~\eqref{E:overlineE} and such that $F \in \Delta_2 \cap \nabla_2$. Assume  that there exists  a constant $\kappa $ such that
\begin{equation}\label{H}
\frac{F_n^{-1}(t)}{F^{-1}(t)} \leq \kappa \frac{E_n^{-1}(t)}{E^{-1}(t)} \quad \text{for} \quad t>0.
\end{equation}
Let $f\in L^{E\vee F}(\rn) $. There exists a constant  $\sigma_1 \in (0,\sigma_0)$ such that, if 
$\|f\|_{L^E(\rn)} <\sigma_1$, 
then the solution $u$ to the problem~\eqref{eq:main-eq} provided by Theorem~\ref{thm1} satisfies
\begin{equation}\label{nov6}
u \in  V^{1,(F_\diamond,F_n)}({\rnp}, \rn).
\end{equation}
\end{thm}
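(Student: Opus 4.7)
The plan is to realize the solution $u$ as a fixed point of an integral operator acting on boundary traces, and then to \emph{rerun} the contraction argument from the proof of Theorem~\ref{thm1} in the Orlicz scale attached to $F$, identifying the new fixed point with $u$ via uniqueness. The starting point is the representation of a harmonic function in $\rnp$ with prescribed normal derivative on the boundary: writing $v := u(\cdot,0)$ and $g := \mathcal N(v)+f$, one has $v = \mathcal I g$, where $\mathcal I$ acts as a constant multiple of the Riesz potential of order one on $\rn$, while $u$ itself is recovered from $g$ by a Poisson-type extension $\mathcal P$. The boundedness of $\mathcal I \colon L^F(\rn) \to L^{F_n}(\rn)$ and of $\nabla \mathcal P \colon L^F(\rn) \to L^{F_\diamond}(\rnp)$ is exactly the kind of Orlicz estimate assembled in Section~\ref{S:4}; here the hypotheses $F \in \Delta_2 \cap \nabla_2$ and \eqref{E:overlineE} are used.

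The construction of $E$ out of $\mathcal N$ already yields a composition bound
\begin{equation*}
\|\mathcal N(v)\|_{L^E(\rn)} \lesssim \Phi\bigl(\|v\|_{L^{E_n}(\rn)}\bigr),
\end{equation*}
for some nondecreasing $\Phi$ with $\Phi(0)=0$, which is what drives Theorem~\ref{thm1}. The role of hypothesis~\eqref{H} is precisely to provide the analogous $F$-scale estimate
\begin{equation*}
\|\mathcal N(v)\|_{L^F(\rn)} \lesssim \Psi\bigl(\|v\|_{L^{F_n}(\rn)}\bigr),
\end{equation*}
for $v$ of uniformly small norm in $L^{E_n}$, together with a Lipschitz variant derived from \eqref{eq:N2}. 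I would verify this by computing the Luxemburg functional $\int_{\rn} F\bigl(A(|v|^{n'})\bigr)\,dx$ via the Sobolev-conjugate identity \eqref{A-alpha} for $F$, converting the ratio $F_n^{-1}/F^{-1}$ into the analogous ratio for $E$ through \eqref{H}, and invoking $F \in \Delta_2 \cap \nabla_2$ to absorb the resulting rescalings.

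With these ingredients in hand, introduce the operator $\mathcal T w := \mathcal I(\mathcal N(w)+f)$ on the ball
\begin{equation*}
B_R = \bigl\{ w \in L^{E_n}(\rn) \cap L^{F_n}(\rn) : \ \|w\|_{L^{E_n}} \le \sigma_0 c_0,\ \|w\|_{L^{F_n}} \le R\bigr\},
\end{equation*}
equipped with the combined norm, and choose $\sigma_1\in(0,\sigma_0)$ and $R$ small enough that $\mathcal T$ is a self-map and a contraction of $B_R$. Its unique fixed point $\tilde v$ is also a fixed point of the $E$-scale map, so the uniqueness built into Theorem~\ref{thm1} forces $\tilde v = u(\cdot,0)$. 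The $F_n$-membership of this trace, combined with the mapping property $\nabla \mathcal P \colon L^F(\rn) \to L^{F_\diamond}(\rnp)$ applied to $g = \mathcal N(u(\cdot,0))+f \in L^F(\rn)$, yields~\eqref{nov6}. The main obstacle is the composition inequality underlying the second step: since $F$ has no intrinsic tie to $\mathcal N$, the bound $\|A(|v|^{n'})\|_{L^F} \lesssim \Psi(\|v\|_{L^{F_n}})$ must be squeezed out of the purely comparative hypothesis~\eqref{H}, and this is the only place where the Orlicz technicalities go beyond those already developed for Theorem~\ref{thm1}.
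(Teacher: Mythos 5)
Your proposal is correct and follows essentially the same route as the paper: the crux in both is the $F$-scale H\"older-type estimate $\|\mathcal N(v)\|_{L^F} \lesssim \|v\|_{L^{F_n}}\,\|D(\cdot|v|)\|_{L^{B_{E,n}}}$, which the paper derives from hypothesis~\eqref{H} via Lemmas~\ref{EF} and~\ref{BF}, with the crucial feature that the ``small factor'' $\|D(\cdot|v|)\|_{L^{B_{E,n}}}$ is tamed by the $E_n$-norm (already small from Theorem~\ref{thm1}), not by the $F_n$-norm. Your ``$\Psi$'' phrasing blurs this slightly, but you do stipulate that the estimate only holds for $v$ of small $L^{E_n}$-norm, so there is no gap. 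The one stylistic difference: you propose running a fresh Banach contraction on a combined ball in $L^{E_n}\cap L^{F_n}$ and then identifying its fixed point with $u(\cdot,0)$ by uniqueness, whereas the paper bypasses the identification step by directly showing that the Picard iterates $\{u_j\}$ from the proof of Theorem~\ref{thm1}, whose limit is already known to be $u$, form a Cauchy sequence in $V^{1,(F_\diamond,F_n)}(\rnp,\rn)$ --- the contraction constant again coming from the $E_n$-smallness of each $u_j$. Both implementations are valid; the paper's avoids the self-map-on-a-product-ball bookkeeping and the uniqueness transfer, but otherwise they are the same argument.
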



\begin{rem}{\rm
If  $F$ is a Young function whose Matuszewska-Orlicz index satisfies $I(F)<n$, then the inequality~\eqref{H} holds whatever $E$ is. This is a consequence of Lemma \ref{index-lemma} from  Section \ref{S:aux}.}
\end{rem}

Our last main result about the problem \eqref{eq:main-eq} proposes additional assumptions 
 on the data $\mathcal N$ and $f$ for the  solution discussed so far to be classical, 
namely smooth in $\rnp$ and continuously differentiable up to its boundary.
\\ The function $\mathcal N$ is required to be differentiable, at least for small values of its argument, with a derivative whose modulus satisfies an appropriate growth condition.
\\ The function $f$ is assumed to belong to an Orlicz-Sobolev space defined by some Young function $M$
  which agrees with $E$ for small values of its argument, and is suitably modified for large values. The function  $M$ has to be  finite-valued and obey:
\begin{equation}\label{M}
\begin{cases}
  M(t) = E(t)\,\,\, \text{near $0$,}
  \\ \\
\displaystyle \int ^{\infty}\bigg(\dfrac{t}{M(t)}\bigg)^{\frac{1}{n-1}}dt<\infty.  
\end{cases}
\end{equation}
Observe that  the second condition in \eqref{M} implies that the Sobolev conjugate $M_n$ of $M$ fulfils:
\begin{align}\label{aug1}
M_n(t) = \infty \quad \text{for large $t$.}
\end{align}

\begin{thm}\label{thm:reg2}{\rm\bf{ [Smooth solutions to problem \eqref{eq:main-eq}]}}
Let $A$  be as in Theorem~\ref{thm1}, let $D$ be the function defined by \eqref{D}, and let $M$ be a Young function satisfying \eqref{M}.
Assume that:
\\ (i) The function $D$ admits the lower bound:
 \begin{align}
    \label{2024-160}
     D(t)\geq  \phi(t) \psi (t) \quad \text{for \,$t > 0$,}
\end{align}
for some non-decreasing continuous functions $\phi, \psi : (0, \infty) \to (0, \infty)$  such that 
\begin{equation}\label{E:convexity}
\phi(\varepsilon t) \leq   c_0\, \varepsilon^\gamma \phi\left(t\right) \quad \text{for $\varepsilon \in (0,1)$   and $t\in (0, t_0)$},
\end{equation}
 for some positive constants  $\gamma$, $c_0$ and $t_0$.
 \\ (ii)
 The function $\mathcal N$ is differentiable and
\begin{equation}\label{eq:N3}
|\mathcal N'(t)-\mathcal N'(s)| \leq c \,\phi(\theta|t-s|)\, (\psi(\theta |t|)+\psi(\theta |s|)) \,\,\, \text{for $t$ and $s$ near 0,}
\end{equation}
for some positive constants $\theta$ and $c$.
\\ Then, there exists a constant $\eta>0$ such that, if $f\in W^{1,M}(\rn)$ and $\|f\|_{W^{1,M}(\rn)}\leq \eta$, then the weak solution $u$ to problem~\eqref{eq:main-eq} provided by Theorem~\ref{thm1} is  a classical solution, in the sense that
\begin{equation}\label{nov10}
u \in  C^{\infty}(\rnp)\cap C^1\big(\overline{\rnp}\big).
\end{equation}
\end{thm}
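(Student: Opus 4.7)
The interior smoothness $u\in C^\infty(\mathbb{R}^{n+1}_+)$ is automatic from $\Delta u=0$ by classical elliptic regularity, so the content of the theorem is the continuous extension of $u$ and $\nabla u$ up to $\partial\mathbb{R}^{n+1}_+$. The basic tool is the Neumann-to-Dirichlet map for the half-space: writing $g(x):=\mathcal N(u(x,0))+f(x)$, the weak formulation and the Green function of the Neumann problem yield the boundary integral representation
\[
u(x,0)=c_n\, I_1 g(x)\qquad\text{and}\qquad \nabla_x u(x,0)=c_n\, I_1(\nabla g)(x),
\]
where $I_1$ is the Riesz potential of order $1$ on $\mathbb R^n$, the second identity resulting from an integration by parts once $g$ is shown to be weakly differentiable. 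Establishing \eqref{nov10} therefore reduces to proving $g\in W^{1,M}(\mathbb R^n)$, since by \eqref{M} the space $W^{1,M}(\mathbb R^n)$ embeds into $L^\infty(\mathbb R^n)\cap C(\mathbb R^n)$ via \eqref{embinf}--\eqref{cont}, and the same embedding applied to $I_1(\nabla g)$ gives the continuity of $\nabla_x u(\cdot,0)$.

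The heart of the proof is an Orlicz-Sobolev bootstrap for the trace, carried out by re-running the fixed-point scheme of Theorem~\ref{thm1} in a smaller ball of a stronger space, namely the set of functions $v$ on $\mathbb R^n$ with $\nabla v\in L^M(\mathbb R^n)$ of norm at most some $\delta$ (together with the weaker $L^{E_n}$ and $L^{E_\diamond}$ controls already furnished by Theorem~\ref{thm1}). The iteration sends $v$ to the boundary trace of the harmonic extension whose Neumann datum is $\mathcal N(v)+f$. The required contraction relies on the boundedness of $I_1\colon L^M\to L^{M_n}$ and of the associated singular integrals on $L^M$ (legitimate because $M\in\Delta_2\cap\nabla_2$, by the results of Section~\ref{S:4}), together with the crucial nonlinear Orlicz estimate
\[
\|\mathcal N'(v)\nabla v\|_{L^M(\mathbb R^n)}\leq C(\|v\|_{L^\infty})\,\|\nabla v\|_{L^M(\mathbb R^n)},\qquad C(\|v\|_{L^\infty})\to 0 \text{ as }\|v\|_{L^\infty}\to 0,
\]
and on the analogous Lipschitz-in-$v$ version derived from \eqref{eq:N3}. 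To establish it, \eqref{eq:N3} applied with $s=0$ (noting $\mathcal N'(0)=0$, which follows from \eqref{eq:N1} since $A(|t|^{n'})/t\to 0$ as $t\to 0$) gives the pointwise bound $|\mathcal N'(v)|\lesssim \phi(\theta|v|)\psi(\theta|v|)\leq D(\theta|v|)=A((\theta|v|)^{n'})/(\theta|v|)$ via \eqref{2024-160}; the power-type scaling \eqref{E:convexity} then lets us extract a factor $\varepsilon^\gamma$ with $\varepsilon=\|v\|_{L^\infty}$ by writing $\phi(\theta|v|)=\phi(\varepsilon\cdot\theta|v|/\varepsilon)$. Since $M=E$ near $0$ and $\varepsilon$ is small (by \eqref{embinf} applied to $v$), a generalized Hölder inequality \eqref{holdergen} matching $D$ against the Orlicz gauge defining $E$ closes the estimate with a small prefactor.

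Once the contraction yields a fixed point $\tilde u$ in the stronger ball, uniqueness (inside the Theorem~\ref{thm1} ball, to which the stronger ball is a subset once $\eta$ is chosen small enough) identifies $\tilde u$ with $u$. Consequently, $u(\cdot,0)\in W^{1,M}(\mathbb R^n)\hookrightarrow L^\infty\cap C(\mathbb R^n)$, $\nabla_x u(\cdot,0)=c_nI_1(\nabla g)\in C(\mathbb R^n)$, and $\partial_{x_{n+1}} u(\cdot,0)=-\mathcal N(u(\cdot,0))-f\in C(\mathbb R^n)$ by continuity of the composition. Since each scalar component of $\nabla u$ is harmonic in $\mathbb R^{n+1}_+$ with continuous and bounded boundary values, the Poisson integral representation extends it continuously to $\overline{\mathbb R^{n+1}_+}$, establishing \eqref{nov10}. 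The principal obstacle is the middle paragraph: aligning the non-standard Orlicz gauges $A$, $E$, $D$ and $M$ in the composition/product estimate so that the contraction closes with a small constant is delicate, and depends essentially on the structural assumptions \eqref{2024-160}, \eqref{E:convexity} and on the matching $M=E$ near the origin.
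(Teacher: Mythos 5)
Your high-level plan agrees with the paper's: you reduce \eqref{nov10} to showing that the boundary datum $g=\mathcal N(u(\cdot,0))+f$ lands in $W^{1,M}(\rn)$ and then propagate this to $C^1(\overline{\rnp})$ via the Riesz/Poisson representation (the paper packages exactly this step in Lemma~\ref{lem:reg}). The paper also runs a bootstrap on the Picard iterates, just as you propose, and your observations that $\mathcal N'(0)=0$ and that the interior smoothness is automatic are correct.

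However, there is a genuine gap in the middle step that makes the iteration as you describe it fail to close. You take the ball $\{\nabla v\in L^M(\rn):\|\nabla v\|_{L^M}\le\delta\}$ and rely on the estimate $\|\mathcal N'(v)\nabla v\|_{L^M}\le\|\mathcal N'(v)\|_{L^\infty}\|\nabla v\|_{L^M}$. But the fixed-point map sends $v$ to (the trace of) $T_1(\mathcal N(v)+f)$, whose gradient is $I_1\bigl(\mathcal N'(v)\nabla v+\nabla f\bigr)$, and $I_1$ maps $L^M(\rn)$ into $L^{M_n}(\rn)$, not back into $L^M(\rn)$. Since $L^{M_n}(\rn)\not\hookrightarrow L^M(\rn)$ on the whole space (near zero $M_n$ is much smaller than $M$), the iterate leaves your ball and the scheme does not close. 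The correct ambient space for the trace gradient is $L^{M_n}(\rn)$ (this is precisely the choice of $\Y$ in the paper's proof), and then the key step is the \emph{Orlicz} H\"older pairing $\|\mathcal N'(v)\nabla v\|_{L^M}\lesssim\|\nabla v\|_{L^{M_n}}\|\mathcal N'(v)\|_{L^{B_{M,n}}}$ built on the conjugate gauge $B_{M,n}$ of \eqref{2024-32}--\eqref{E:B}; a bare $L^\infty$ bound on $\mathcal N'(v)$ does not suffice to pass from $L^{M_n}$ to $L^M$. Controlling $\|\mathcal N'(v)\|_{L^{B_{M,n}}}$ and the difference $\|\mathcal N'(u_j)-\mathcal N'(u_{j-1})\|_{L^{B_{M,n}}}$ is exactly where the auxiliary functions $Q_1=M_n(2\phi^{-1})$, $Q_2=M_n(2\psi^{-1})$, the inequality $Q_1^{-1}Q_2^{-1}\le B_{M,n}^{-1}$ near zero (Lemma~\ref{lem:MnD}), and the scaling \eqref{E:convexity} enter. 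A related issue is that \eqref{eq:N3} is H\"older of order $\gamma$, not Lipschitz, so one does not obtain a geometric contraction: the paper's estimate has the form $\|u_{j+1}-u_j\|_{\Y}\le\frac12\|u_j-u_{j-1}\|_{\Y}+c\,2^{-j\gamma}$, and the summability of the $2^{-j\gamma}$ tail is what replaces the contraction. Your sketch labels this the ``delicate'' part but does not supply it, and the $L^\infty$ shortcut you propose in its place is not adequate.
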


\begin{rem}\label{asympt}
{\rm Plainly, only the asymptotic behaviour of the functions $\phi$ and $\psi$ in Theorem \ref{thm:reg2} is relevant.
}
\end{rem}

\begin{rem}
{\rm The condition \eqref{E:convexity} can be equivalently formulated by requiring that the index $i_0(\phi)$, defined as in \eqref{index0}, satisfies:
\begin{align}
   \label{2024-8}
   i_0(\phi)>0.
\end{align}
}
\end{rem}

 \begin{rem}{\rm
In analogy with the inequality~\eqref{eq:N2}, one may be tempted to simplify the assumption~\eqref{eq:N3} as
\[
|\mathcal N'(t)-\mathcal N'(s)| \leq c |t-s| \left(\frac{D(\theta |t|)}{|t|}+\frac{D(\theta |s|)}{|s|}\right).
\]
However, the latter condition is informative only when $D(t)/t$ is non-decreasing, a property which is not fulfilled 
for various important 
choices of the function $\mathcal N$. 
By contrast, the assumption \eqref{eq:N3} is sufficiently general  for Theorem \ref{thm:reg2}
to be applied in most situations of interest.
\\ In particular,  if $D(t)/t$ is non-increasing, the condition \eqref{eq:N3}
is typically satisfied with 
$\phi(t)=D(t)$ and $\psi(t)=1$ in~\eqref{2024-160}. Notice that the assumption~\eqref{E:convexity}  then holds with $\gamma=n'-1$. Indeed, by the convexity of $A$, if $\varepsilon \in (0,1)$ then  
\begin{equation}\label{E:D-epsilon}
\frac{D(t)}{\varepsilon^{n'-1}}=\frac{A(t^{n'})}{\varepsilon^{n'} \cdot \frac{t}{\varepsilon}}
\leq \frac{A\left(\left(\frac{t}{\varepsilon}\right)^{n'}\right)}{\frac{t}{\varepsilon}}
=D(t/\varepsilon) \quad \text{for $t>0$}.
\end{equation}
Specific  choices of the functions $\phi$ and $\psi$  in~\eqref{2024-160} are described in the  examples below.}
\end{rem}

\begin{exam}\label{ex1}{\rm
Assume that $\mathcal N\in C^1(\mathbb R)$ and
\begin{equation}\label{oct1}
\mathcal N(t)  = \begin{cases}   |t|^{p_0-1} t & \quad \text{for $t$ near $0$}
\\
|t|^{p_1-1} t & \quad \text{for $|t|$ near $\infty$,}
\end{cases}
\end{equation}
for some $p_0, p_1 >n'$. Then, the conditions \eqref{eq:N1} and \eqref{eq:N2} are fulfilled with 
\begin{equation}\label{oct2}
A(t)  = \begin{cases}   t^{\frac{p_0}{n'}} & \quad \text{for $t$ near $0$}
\\
t^{\frac{p_1}{n'}}  & \quad \text{for $t$ near $\infty$.}
\end{cases}
\end{equation}
Hence, from equations \eqref{E} and \eqref{nov1} one infers that
\begin{equation}\label{2024-400}
 E(t)  \simeq \begin{cases}   t^{\frac{n}{p_0'}} & \quad \text{for $t$ near $0$}
\\
t^{\frac{n}{p_1'}}  & \quad \text{for $t$ near $\infty$,}
\end{cases}
\end{equation}
\begin{equation}\label{oct3}
 E_\diamond(t)  \simeq \begin{cases}   t^{\frac{n+1}{p_0'}} & \quad \text{for $t$ near $0$}
\\
t^{\frac{n+1}{p_1'}}  & \quad \text{for $t$ near $\infty$,}
\end{cases}
\end{equation}
and
\begin{equation}\label{oct4}
E_n(t)  \simeq \begin{cases}   t^{n(p_0-1)} & \quad \text{for $t$ near $0$}
\\
t^{n(p_1-1)}  & \quad \text{for $t$ near $\infty$.}
\end{cases}
\end{equation}
 Moreover, if $p_0 \geq 2$, then the  bound in~\eqref{2024-160} holds with
\[
\phi(t) \simeq t \quad \text{and} \quad \psi(t)\simeq t^{p_0-2} \quad \text{for $t$ near $0$},
\]
whereas, for $p_0 \leq 2$, an admissible choice is
\[
\phi(t)\simeq t^{p_0-1} \quad \text{and} \quad \psi(t)\simeq 1 \quad \text{for $t$ near $0$}.
\]}
\end{exam}

\begin{exam}\label{ex2}{\rm
Assume that $\mathcal N\in C^1(\mathbb R)$ and
\begin{equation}\label{oct5}
\mathcal N(t)  = \begin{cases}   |t|^{p_0-1} t (\log 1/|t|)^{\beta_0}& \quad \text{for $t$ near $0$}
\\
|t|^{p_1-1} t (\log |t|)^{\beta_1}& \quad \text{for $|t|$ near $\infty$,}
\end{cases}
\end{equation}
for some $p_0, p_1 >n'$ and $\beta_0, \beta_1 \in \mathbb R$. Then, the conditions \eqref{eq:N1} and \eqref{eq:N2} are fulfilled with 
\begin{equation}\label{oct6}
A(t)  \simeq \begin{cases}   t^{\frac{p_0}{n'}} (\log 1/t)^{\beta_0} & \quad \text{for $t$ near $0$}
\\
t^{\frac{p_1}{n'}}  (\log t)^{\beta_1} & \quad \text{for $t$ near $\infty$.}
\end{cases}
\end{equation}
Hence,
\begin{equation}\label{2024-401}
 E(t)  \simeq \begin{cases}   t^{\frac{n}{p_0'}} (\log 1/t)^{\frac {n\beta_0}{p_0}}  & \quad \text{for $t$ near $0$}
\\
t^{\frac{n}{p_1'}}  (\log t)^{\frac{n\beta_1}{p_1}} & \quad \text{for $t$ near $\infty$,}
\end{cases}
\end{equation}
\begin{equation}\label{oct7}
 E_\diamond(t)  \simeq \begin{cases}   t^{\frac{n+1}{p_0'}} (\log 1/t)^{\frac {\beta_0}{p_0}(n+1)}  & \quad \text{for $t$ near $0$}
\\
t^{\frac{n+1}{p_1'}}  (\log t)^{\frac{\beta_1}{p_1}(n+1)} & \quad \text{for $t$ near $\infty$,}
\end{cases}
\end{equation}
and
\begin{equation}\label{oct8}
E_n(t)  \simeq \begin{cases}   t^{n(p_0-1)} (\log 1/t)^{n\beta_0} & \quad \text{for $t$ near $0$}
\\
t^{n(p_1-1)}   (\log t)^{n\beta_1}  & \quad \text{for $t$ near $\infty$.}
\end{cases}
\end{equation}
 As for the condition \eqref{2024-160}, if either $p_0 > 2$, or $p_0=2$ and $\beta \leq 0$, then it holds with
\[
\phi(t) \simeq t \quad \text{and} \quad \psi(t) \simeq t^{p_0-2} (\log 1/t)^{\beta_0} \quad \text{for $t$ near $0$},
\]
whereas, if either  $p_0 > 2$, or $p_0=2$ and $\beta \geq 0$, then \eqref{2024-160} is fulfilled with 
\[
\phi(t) \simeq t^{p_0-1}(\log 1/t)^{\beta_0} \quad \text{and} \quad \psi(t) \simeq 1 \quad \text{for $t$ near $0$}.
\]}
\end{exam}

\begin{exam}\label{ex3}{\rm
Assume that  
\begin{equation}\label{oct9}
\mathcal N(t)  =   |t|^{p-1} t\, e^{t^\alpha}
\end{equation}
for some $p >n'$ and $\alpha  >0$. Then, the conditions \eqref{eq:N1} and \eqref{eq:N2} are fulfilled with 
\begin{equation}\label{oct10}
A(t)   \simeq  t^{\frac p{n'}} e^{t^{\frac \alpha{n'}}}.
\end{equation}
Therefore,
\begin{equation}\label{2024-402}
E(t)  \simeq \begin{cases}   t^{\frac{n}{p'}}  & \quad \text{for $t$ near $0$}
\\
t^{n}  (\log t)^{n-1- \frac n\alpha} & \quad \text{for $t$ near $\infty$,}
\end{cases}
\end{equation}
\begin{equation}\label{oct11}
E_\diamond(t)  \simeq \begin{cases}   t^{\frac{n+1}{p'}}  & \quad \text{for $t$ near $0$}
\\
t^{n+1}  (\log t)^{(n+1)(\frac 1{n'}-\frac{1}\alpha)} & \quad \text{for $t$ near $\infty$}
\end{cases}
\end{equation}
and
\begin{equation}\label{oct12}
E_n(t)  \simeq
\begin{cases}   t^{n(p-1)}   & \quad \text{for $t$ near $0$}
\\
e^{t^\alpha}  & \quad \text{for $t$ near $\infty$.}
\end{cases}
\end{equation}
 If $p \geq 2$, then the inequality ~\eqref{2024-160} is satisfied with
\[
\phi(t) \simeq t \quad \text{and} \quad \psi(t) \simeq t^{p-2}  \quad \text{for $t$ near $0$};
\]
if $p \leq 2$, it instead holds with
\[
\phi(t) \simeq t^{p-1} \quad \text{and} \quad \psi(t) \simeq 1 \quad \text{for $t$ near $0$}.
\]}
\end{exam}

\subsection{Polyharmonic functions}\label{S:higher}
The results from the previous subsection admit analogs for the polyharmonic problem 
~\eqref{eq:gen-eq} of arbitrary order $2m$, with $m\in \N$. Their statements make use of functions $A$, $D$ and $E$, depending on $m$, and having the same role as those introduced for $m=1$.  
\\
Set \begin{align}\label{ell}
    \ell=2m-1,
\end{align}
and let $\mathcal N: \mathbb R \rightarrow \mathbb R$ be as in~\eqref{eq:gen-eq}. We
assume that
there exists a   finite-valued Young function $A$ such that 
\begin{equation}\label{eq:N1new}
 |\mathcal N(t)| \leq A(|t|^{\frac{n}{n-\ell}})  \quad \text{for $t \in \R$,}
\end{equation}
and 
\begin{equation}\label{eq:N2new}
|\mathcal N(t)-\mathcal N(s)|\leq c|t-s|\bigg(\frac{A( |\theta t|^{\frac{n}{n-\ell}})}{|t|} +\frac{A( |\theta s|^{\frac{n}{n-\ell}})} {|s|}\bigg) \quad  \text{for $t ,s \in \R$,}
\end{equation}
for some positive constants $c$ and $\theta>0$. The function $D$ is accordingly defined as
\begin{equation}\label{Dk}
 D(t)= \begin{cases}\displaystyle \frac{A(t^{\frac{n}{n-\ell}})}{t} & \quad  \text{if $t>0$}
\\
0 & \quad  \text{if $t=0$,}
\end{cases}
\end{equation}
and the 
 Young function $E: [0, \infty) \to [0, \infty)$ as 
\begin{equation}\label{Enew}
E(t) = \int_0^{A^{-1}(t)} a(s)^{\frac{n}{\ell}}\, ds  \qquad \text{for $t \geq 0$,}
\end{equation}
where $a$ denotes the function appearing in \eqref{young}. 
Lemma \ref{Enequiv} ensures that the condition \eqref{conv-alpha} 
is fulfilled with $A$ replaced with $E$, and its Sobolev conjugate $E_{\frac{n}{\ell}}$, defined as in \eqref{A-alpha}, obeys
\begin{equation}\label{nalpha}
E_{\frac{n}{\ell}}(t) \simeq  \int_0^{t^{\frac{n}{n-\ell}}} a(s)^{\frac{n}{\ell}}\, ds  \qquad \text{for $t \geq 0$.}
\end{equation}
The Young function
$E_\diamond$ 
 is defined via $E$ as in~\eqref{G}.

\begin{defn}\label{def:weak-sol}  {\rm\bf{ [Weak solutions to problem \eqref{eq:gen-eq}]}}   Let $m\geq 2$ and let $\ell$ be given by \eqref{ell}. Assume that $f\in L^{1}_{loc}(\rn)$ 
and that the condition \eqref{eq:N1new} is satisfied for some Young function $A$. 
Let $E$, $E_{\frac{n}{\ell}}$ and $E_\diamond$ be the functions  given by \eqref{Enew}, \eqref{nalpha}, and \eqref{2024-260}.
We say that $u\in V^{\ell,(E_\diamond,E_{\frac{n}{\ell}})}({\rnp}, \rn)$ is a weak solution to the problem \eqref{eq:gen-eq} if  
\begin{align*}
\int_{\rnp}\nabla^m u\cdot \nabla^m \varphi \,dxdx_{n+1}-\int_{\rn}(\mathcal N(u)(x,0)+f(x))\varphi(x,0)\, dx=0
\end{align*}
for every $\varphi\in C^{\infty}_c\big(\overline{\rnp}\big)$ such that $(\partial_{x_{n+1}}\Delta^{k}\varphi)(\cdot,0)=0$ for all $k=0,...,m-2$.
\end{defn}
The existence result for solutions to the problem ~\eqref{eq:gen-eq} is the subject of the following theorem.

 \begin{thm}\label{thm:g}{\rm\bf{ [Existence of weak solutions to problem \eqref{eq:gen-eq}]}}   Let $m\geq 2$ and let $\ell$ be given by \eqref{ell}.
 Assume that the function $\mathcal N$ satisfies the assumptions  \eqref{eq:N1new}  and \eqref{eq:N2new} for some Young function $A$ fulfilling \eqref{A}. Let $E$, $E_{\frac{n}{\ell}}$ and $E_\diamond$ be the functions  given by \eqref{Enew}, \eqref{nalpha}, and \eqref{2024-260}.
 %
Then, there exist positive constants $\sigma _0$ and $c$ such that, if $$\|f\|_{L^{E}(\rn)}\leq \sigma_0,$$ 
then the problem \eqref{eq:gen-eq} admits a weak solution 
$u \in C^\infty(\rnp) \cap V^{\ell,( E_\diamond,E_{\frac{n}{\ell}})}(\rnp, \rn)$ satisfying 
\begin{equation}\label{nov4k}
 \|u\|_{V^{\ell,( E_\diamond ,E_{\frac{n}{\ell}})}(\rnp,  \rn)} \leq \sigma_0 c.
\end{equation}
Assume, in addition, that 
$\mathcal N(t)>0$ for $t>0$.  If $f >0$   a.e., then $u>0$ as well.
\end{thm}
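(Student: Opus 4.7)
The plan is to replicate the strategy behind Theorem \ref{thm1} in the polyharmonic setting, with the Poisson extension replaced by the solution operator $\mathcal{G}$ that inverts the linear part of \eqref{eq:gen-eq} with a single prescribed boundary datum. Writing $\ell = 2m-1$, let $\mathcal{G}$ send a function $g$ on $\mathbb{R}^n$ to the function $w$ in $\rnp$ solving $\Delta^m w = 0$ with $\partial_{x_{n+1}} \Delta^k w|_{\partial \rnp} = 0$ for $k = 0, \dots, m-2$ and $(-1)^m \partial_{x_{n+1}} \Delta^{m-1} w|_{\partial \rnp} = g$. Explicit iterated Poisson formulas represent $\mathcal{G} g$ as a convolution with a positive kernel of Riesz type of order $\ell$ in the boundary direction. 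Consequently, the trace $(\mathcal{G} g)(\cdot, 0)$ agrees, up to a dimensional constant, with the Riesz potential $I_\ell g$, while $\nabla^\ell \mathcal{G} g$ is of Calder\'on--Zygmund type in $\rnp$. The Orlicz-space boundedness results of Section \ref{S:4}, applied with this kernel order and in the $(n+1)$-dimensional ambient space, should yield
\begin{align*}
\|(\mathcal{G} g)(\cdot,0)\|_{L^{E_{n/\ell}}(\rn)} + \|\nabla^\ell \mathcal{G} g\|_{L^{E_\diamond}(\rnp)} \leq C \|g\|_{L^{E}(\rn)},
\end{align*}
under hypothesis \eqref{A}, which via Lemma \ref{Enequiv} secures \eqref{nalpha} and the relevant $\Delta_2$/$\nabla_2$ features of $E$.

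With $\mathcal{G}$ in hand, I would set up a Banach fixed-point argument on the trace. Define
\begin{align*}
T \colon L^{E_{n/\ell}}(\rn) \to L^{E_{n/\ell}}(\rn), \qquad T(v) = [\mathcal{G}(\mathcal{N}(v)+f)](\cdot, 0),
\end{align*}
and restrict $T$ to a closed ball $\overline{B_R}$ of $L^{E_{n/\ell}}(\rn)$. The first key point is that $\mathcal{N}$ maps $L^{E_{n/\ell}}(\rn)$ to $L^{E}(\rn)$: the growth bound \eqref{eq:N1new} yields $|\mathcal{N}(v)| \leq A(|v|^{n/(n-\ell)})$, and the shape of $E_{n/\ell}$ in \eqref{nalpha} is tuned precisely so that $v \in L^{E_{n/\ell}}$ forces $A(|v|^{n/(n-\ell)}) \in L^{E}$ with a modular-type bound. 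Combining this with $\|f\|_{L^E} \leq \sigma_0$ and the boundedness of $\mathcal{G}$ shows that for $v \in \overline{B_R}$ one has $\|T(v)\|_{L^{E_{n/\ell}}} \leq C_1 (\omega(R) + \sigma_0)$, where $\omega(R) \to 0$ as $R \to 0$. Choosing $R$ and then $\sigma_0$ small enough, $T(\overline{B_R}) \subset \overline{B_R}$.

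For the contraction estimate, I would use the Lipschitz-type bound \eqref{eq:N2new} together with the generalized H\"older inequality \eqref{holdergen} and the $\Delta_2$-structure encoded in \eqref{A}. These combine to produce
\begin{align*}
\|\mathcal{N}(v_1) - \mathcal{N}(v_2)\|_{L^E(\rn)} \leq C_2\, \Theta\!\left(\|v_1\|_{L^{E_{n/\ell}}} + \|v_2\|_{L^{E_{n/\ell}}}\right) \|v_1 - v_2\|_{L^{E_{n/\ell}}(\rn)},
\end{align*}
for a continuous modulus $\Theta$ with $\Theta(0)=0$. Composing with $\mathcal{G}$ and shrinking $R$ so that $C_1 C_2 \Theta(2R) < 1/2$ turns $T$ into a strict contraction. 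Banach's theorem produces a fixed point $u_0$, and $u := \mathcal{G}(\mathcal{N}(u_0) + f)$ is the sought weak solution, with \eqref{nov4k} following from the two continuity estimates. Interior smoothness $u \in C^\infty(\rnp)$ is immediate from classical elliptic regularity for $\Delta^m u = 0$, and positivity under $\mathcal{N}(t) > 0$ for $t > 0$ and $f > 0$ is preserved throughout the iteration $v_{k+1} = T(v_k)$ starting from $v_0 = [\mathcal{G} f](\cdot, 0) > 0$, since the kernel of $\mathcal{G}$ is positive and $\mathcal{N}$ preserves sign.

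The main obstacle I anticipate is the rigorous derivation of the Orlicz continuity of $\mathcal{G}$ with the proper $(n+1)$-dimensional target space for $\nabla^\ell \mathcal{G} g$. In the harmonic case one invokes only the standard Poisson kernel, but here one must either realize $\mathcal{G}$ as a carefully ordered composition of Poisson-type operators or exhibit it as convolution with a higher-order Green-type kernel and then verify the two-sided boundedness between $L^E(\rn)$ and the pair $(L^{E_\diamond}(\rnp),\, L^{E_{n/\ell}}(\rn))$; this is exactly the input to be drawn from Section \ref{S:4}. Once that operator-theoretic step is secured, the fixed-point argument goes through almost verbatim as in the second-order proof.
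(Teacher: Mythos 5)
Your proposal follows essentially the same route as the paper: the fixed-point operator you call $T$ is the trace of the paper's operator $\mathscr{L}u = T_\ell(\mathcal{N}(u)+f)$ acting on $V^{\ell,(E_\diamond,E_{n/\ell})}(\rnp,\rn)$, the boundedness of the extension operator that you flag as the main obstacle is exactly Lemma \ref{lem:T-bounds}, and the modulus $\Theta$ controlling both the self-map and contraction estimates is supplied by Lemma \ref{L:lim0} via the generalized H\"older inequality \eqref{E:holder} with the factorization $\mathcal{N}(v) = D(|v|)\cdot|v|$. One step you leave implicit, which the paper carries out in some detail, is the verification that a fixed point actually satisfies Definition \ref{def:weak-sol}: this requires an iterated integration by parts against the kernel $K_\ell$ together with the boundary conditions on the test functions, as in \eqref{IBPpoly}--\eqref{claim}.
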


The Sobolev regularity of the solution to the problem \eqref{eq:gen-eq} calls into play 
 a Young function $F$ such that 
\begin{equation}\label{F}
\int_{0} \left(\frac{t}{F(t)}\right)^{\frac{\ell}{n-\ell}}\,dt<\infty,
\end{equation}
its Sobolev conjugate $F_{\frac n \ell}$ defined as in  \eqref{A-alpha}, and the Young function $F_\diamond$ associated with $F$ as in~\eqref{G}. 

\begin{thm}\label{thm4}{\rm\bf{ [Sobolev regularity of solutions to problem \eqref{eq:gen-eq}]}}   Let $m\geq 2$ and let $\ell$ be given by \eqref{ell}.
 Assume that $A$, $E$ and $\sigma _0$ are as in Theorem~\ref{thm:g}. Let $F$ be a Young function satisfying \eqref{F} and such that $F \in \Delta_2 \cap \nabla_2$. Assume that there exists a constant $\kappa$ such that 
 \begin{equation}\label{HNew}
\frac{F_{\frac n \ell}^{-1}(t)}{F^{-1}(t)} \leq \kappa \frac{E_{\frac n \ell}^{-1}(t)}{E^{-1}(t)} \quad \text{for $t>0$.}
\end{equation}
Let $f\in L^{E\vee F}(\rn) $. Then, there exists a constant  $\sigma_1 \in (0,\sigma_0)$ such that, if $\|f\|_{L^E(\rn)} <\sigma_1$, 
then the solution $u$ to the problem~\eqref{eq:gen-eq} provided by Theorem~\ref{thm:g} satisfies
\begin{equation}\label{2024-11}
u \in  V^{\ell,(F_\diamond,F_{\frac n \ell})}({\rnp}, \rn).
\end{equation}
\end{thm}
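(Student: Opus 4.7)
The strategy is to upgrade the regularity of the weak solution $u$ produced by Theorem~\ref{thm:g} via a second fixed-point argument, carried out in a smaller Orlicz--Sobolev--trace ambient space. The proof will proceed in four steps.

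First, I would recast the weak formulation of \eqref{eq:gen-eq} as a fixed-point identity $u=\mathcal T[\mathcal N(u)+f]$, where $\mathcal T$ is the solution operator to the linear polyharmonic Robin problem with prescribed top-order boundary datum $g=\mathcal N(u)+f$. Thanks to the integral representation formulas developed in Section~\ref{S:4}, $\mathcal T$ decomposes into two pieces: the boundary trace of $\mathcal T g$ on $\partial \rnp$ is, up to normalising constants, the Riesz-type potential $I_\ell * g$ of order $\ell$, while the $\ell$-th order gradient $\nabla^\ell (\mathcal T g)$ on $\rnp$ is essentially the Poisson extension of $g$ across $\partial\rnp$.

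Second, I would establish the mapping property $\mathcal T : L^F(\rn)\to V^{\ell,(F_\diamond,F_{\frac n \ell})}(\rnp,\rn)$. The boundedness $I_\ell : L^F(\rn)\to L^{F_{\frac n \ell}}(\rn)$ is an Orlicz version of the fractional integration theorem; it follows from \eqref{HNew} together with $F\in\Delta_2\cap\nabla_2$, which are exactly what is needed in order for the Orlicz-space boundedness of the Riesz potential to be available on $L^F$. The boundedness of the Poisson extension, from $L^F(\rn)$ to $L^{F_\diamond}(\rnp)$, is built into the $\diamond$-construction \eqref{G} and is one of the outputs of Section~\ref{S:4}.

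Third, I would set up a contraction argument on the intersection space
\[
\mathcal X := V^{\ell,(E_\diamond,E_{\frac n \ell})}(\rnp,\rn)\cap V^{\ell,(F_\diamond,F_{\frac n \ell})}(\rnp,\rn),
\]
equipped with the sum of the two norms, and define $\Phi(v):=\mathcal T[\mathcal N(v)+f]$. By the proof of Theorem~\ref{thm:g}, $\Phi$ already contracts a small ball of $V^{\ell,(E_\diamond,E_{\frac n \ell})}$. What is needed in addition is the estimate $\|\mathcal N(v)\|_{L^F(\rn)}\lesssim \|v\|_{V^{\ell,(F_\diamond,F_{\frac n \ell})}}$, along with the analogous Lipschitz bound in differences, for $v$ of small $\mathcal X$-norm. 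This comes from the pointwise bounds \eqref{eq:N1new}--\eqref{eq:N2new}, the embedding of the trace into $L^{F_{\frac n \ell}}(\rn)$, an Orlicz--H\"older-type estimate in the spirit of \eqref{holdergen}, and, crucially, the compatibility \eqref{HNew}. Combining this with the mapping property from Step~2 and with $f\in L^{E\vee F}(\rn)$, one deduces that for $\sigma_1$ sufficiently small (and smaller than $\sigma_0$) the map $\Phi$ sends a small $\mathcal X$-ball into itself and is a contraction there. Uniqueness of the fixed point inside the larger space $V^{\ell,(E_\diamond,E_{\frac n \ell})}$, as furnished by the proof of Theorem~\ref{thm:g}, then forces the fixed point $\tilde u\in\mathcal X$ to coincide with $u$, yielding \eqref{2024-11}.

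The main obstacle will be the nonlinear Orlicz-composition estimate in Step~3 --- converting the pointwise growth condition $|\mathcal N(t)|\le A(|t|^{n/(n-\ell)})$ and the functional-analytic assumption \eqref{HNew} into a rigorous inequality of the form $\|\mathcal N(v)\|_{L^F(\rn)}\lesssim \|v(\cdot,0)\|_{L^{F_{\frac n \ell}}(\rn)}$ for small $v$. This is the place where all three hypotheses on $F$ are used at once: $F\in\Delta_2$ to manage the growth of $F$ under composition with a nonlinear function, $F\in\nabla_2$ to handle the conjugate side when applying \eqref{holdergen}, and \eqref{HNew} to match the Sobolev scaling of $F$ with the nonlinear structure $A$ governing $\mathcal N$.
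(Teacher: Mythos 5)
Your overall strategy --- recast the problem as a fixed-point identity for $T_\ell$, transfer the boundedness properties of $T_\ell$ to the $F$-scale spaces via Lemma~\ref{lem:T-bounds}, control $\|\mathcal N(v)\|_{L^F}$ through an Orlicz--H\"older inequality mediated by \eqref{HNew}, and then re-run the iteration scheme of Theorem~\ref{thm:g} in the smaller space --- is the same as the paper's, which shows that the Picard iterates $\{u_j\}$ from \eqref{Picard} form a Cauchy sequence in $\Z=V^{\ell,(F_\diamond,F_{n/\ell})}(\rnp,\rn)$ and identifies the limit with $u$.

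There is, however, a genuine gap in the way you set up Step~3. You propose a contraction on a \emph{small ball} of the intersection space $\mathcal X$. But the hypotheses only give $\|f\|_{L^E(\rn)}<\sigma_1$; the norm $\|f\|_{L^F(\rn)}$ is finite but has no smallness. Consequently $\Phi(0)=T_\ell f=u_1$ has $F$-norm $\|u_1\|_{V^{\ell,(F_\diamond,F_{n/\ell})}}\simeq\|f\|_{L^F(\rn)}$, which can be arbitrarily large, so $\Phi$ cannot send a small $\mathcal X$-ball into itself. The correct observation --- and the mechanism the paper exploits --- is that the contraction \emph{factor} in the $F$-norm comes entirely from the $E$-side: by Lemmas~\ref{EF} and \ref{BF},
\begin{equation*}
\|\mathcal N(v)-\mathcal N(w)\|_{L^F(\rn)}\leq c\,\|v-w\|_{L^{F_{n/\ell}}(\rn)}\bigl(\|D(\theta|v|)\|_{L^{B_{E,n/\ell}}(\rn)}+\|D(\theta|w|)\|_{L^{B_{E,n/\ell}}(\rn)}\bigr),
\end{equation*}
and the second factor is made small by Lemma~\ref{L:lim0} once $\|v\|_{L^{E_{n/\ell}}(\rn)}$ and $\|w\|_{L^{E_{n/\ell}}(\rn)}$ are small, with \emph{no} smallness of the $F$-norm required. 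You therefore need a set that is a small ball in the $E$-norm but only bounded (by a constant depending on $\|f\|_{L^F(\rn)}$) in the $F$-norm; alternatively, as the paper does, you can forgo a fresh invariant-set argument altogether and simply verify that the already-constructed iterates $\{u_j\}$ are a Cauchy sequence in $\Z$, using the $E$-side smallness established in Theorem~\ref{thm:g}.

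A secondary inaccuracy: in Step~2 you state that the boundedness $I_\ell : L^F(\rn)\to L^{F_{n/\ell}}(\rn)$ ``follows from \eqref{HNew} together with $F\in\Delta_2\cap\nabla_2$.'' In fact Lemma~\ref{Sobolev-Riesz} gives this boundedness from $F\in\nabla_2$ and \eqref{F} alone; \eqref{HNew} plays no role there. Its sole purpose is the H\"older estimate \eqref{2024-44} of Lemma~\ref{BF}, which is what lets you control $\|\mathcal N(v)\|_{L^F}$ by $\|v\|_{L^{F_{n/\ell}}}\,\|D(|v|)\|_{L^{B_{E,n/\ell}}}$ with the \emph{same} $B_{E,n/\ell}$ as in the $E$-scale analysis. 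You do invoke \eqref{HNew} correctly again in Step~3, but the Step~2 statement should be corrected.
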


 We conclude by focusing on the boundary regularity of solutions to the problem   \eqref{eq:gen-eq}.
 To this purpose, consider a    Young function $M$ as in \eqref{M}. Notice that the convergence of the integral in \eqref{M} implies that
\begin{equation}\label{Mk}
\int ^{\infty}\bigg(\dfrac{t}{M(t)}\bigg)^{\frac{\ell}{n-\ell}}dt<\infty.    
\end{equation}

\begin{thm}\label{thm:gen-eq-reg}{\rm\bf{ [Smooth solutions to problem \eqref{eq:gen-eq}]}}
  Let $m\geq 2$ and let $\ell$ be given by \eqref{ell}.  Let $A$  be as in Theorem~\ref{thm:g}, and let $D$ be the function defined by \eqref{Dk}.  
Assume that:
\\ (i) The function $D$ admits the lower bound \eqref{2024-160},
with $\phi$ fulfilling the condition \eqref{E:convexity}.
\\ (ii)  
 The function $\mathcal N$ is differentiable and satisfies the condition \eqref{eq:N3}.
\\ Then, there exists a constant $\eta>0$ such that, if $f\in W^{1,M}(\rn)$ and $\|f\|_{W^{1,M}(\rn)}\leq \eta$, then the weak solution $u$ to the problem \eqref{eq:gen-eq} provided by Theorem~\ref{thm:g} is  a classical solution, in the sense that
\begin{equation}\label{nov10k}
u \in  C^{\infty}(\rnp)\cap C^\ell(\overline{\rnp}).
\end{equation}
\end{thm}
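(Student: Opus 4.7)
The plan is to mirror the proof of the second-order analogue, Theorem~\ref{thm:reg2}, with appropriate higher-order modifications. Since Theorem~\ref{thm:g} already ensures that $u\in C^\infty(\rnp)$, the task reduces to showing that $u$ and its derivatives up to order $\ell=2m-1$ extend continuously to $\overline{\rnp}$. The overall strategy is to represent $u$ as the convolution of a higher-order Poisson-type kernel $\mathcal P_m$ (attached to the polyharmonic Neumann problem, whose mapping properties are developed in Section~\ref{S:4}) with the boundary datum $g:=\mathcal N(u(\cdot,0))+f$, and then to show that $g\in W^{1,M}(\rn)$.

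The first step is a bootstrap based on Theorem~\ref{thm4} to upgrade the integrability of the boundary trace $u(\cdot,0)$. One applies Theorem~\ref{thm4} with an intermediate polynomial Young function $F(t)=t^p$ where $p>n/\ell$, so that $F_{\frac n\ell}=\infty$ past some threshold; the hypotheses $F\in\Delta_2\cap\nabla_2$, \eqref{F} and \eqref{HNew} are then easily verified. The smallness requirement $\|f\|_{L^E(\rn)}<\sigma_1$ is guaranteed by choosing $\eta$ small, since $M=E$ near $0$ and $M$ dominates $E$ near infinity, so that $L^M(\rn)\hookrightarrow L^E(\rn)$. This yields $u(\cdot,0)\in L^\infty(\rn)$; a further iteration (with a different admissible $F$ equivalent to $M$ near $0$) allows one to place $\nabla_x u(\cdot,0)$ in $L^M(\rn)$.

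The second step is to prove that $\mathcal N(u(\cdot,0))\in W^{1,M}(\rn)$. The bound $\|u(\cdot,0)\|_{L^\infty(\rn)}\leq C$ together with \eqref{eq:N1new} and the identity $M=E$ near $0$ yields $\mathcal N(u(\cdot,0))\in L^M(\rn)$. By the chain rule, the weak derivative is
\[
\nabla_x\mathcal N(u(\cdot,0))=\mathcal N'(u(\cdot,0))\,\nabla_x u(\cdot,0)\quad\text{a.e.\ on }\rn,
\]
and since $u(\cdot,0)$ takes values in a small neighbourhood of $0$ (thanks to the smallness of $\eta$), the hypothesis~\eqref{eq:N3} together with the factorisation~\eqref{2024-160} and the index condition~\eqref{E:convexity} provides the uniform bound on $\mathcal N'(u(\cdot,0))$ required to estimate this product. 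Combining this with the $L^M(\rn)$ estimate on $\nabla_x u(\cdot,0)$ from the previous step yields $g=\mathcal N(u(\cdot,0))+f\in W^{1,M}(\rn)$, with norm bounded by a constant multiple of $\eta$.

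Finally, one feeds this information into the representation formula $u=\mathcal P_m * g$. The condition~\eqref{M}, which forces $M_n\equiv+\infty$ past a finite threshold (and a fortiori $M_{\frac n\ell}\equiv+\infty$), is precisely what allows the convolution with $\mathcal P_m$ and its derivatives up to order $\ell$ to send $W^{1,M}(\rn)$ into $C(\overline{\rnp})$, thus giving $u\in C^\ell(\overline{\rnp})$. The main obstacle is the second step: controlling the composition $\mathcal N(u(\cdot,0))$ in the unconventional Orlicz--Sobolev norm when $\mathcal N$ has, say, exponential growth and $A$ lies outside $\Delta_2$. The interplay between~\eqref{eq:N3}, the factorisation~\eqref{2024-160}, and the smallness assumption $\|f\|_{W^{1,M}(\rn)}\leq\eta$ is exactly what keeps $u(\cdot,0)$ in the regime where these estimates remain effective.
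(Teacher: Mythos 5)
Your overall blueprint (show $\mathcal N(u(\cdot,0))+f\in W^{1,M}(\rn)$, then convert this Sobolev information into boundary continuity) matches the paper's. The critical gap is in your first step, where you propose to obtain $\nabla_x u(\cdot,0)\in L^M(\rn)$ by iterating Theorem~\ref{thm4}. Theorem~\ref{thm4} concludes only $u\in V^{\ell,(F_\diamond,F_{\frac n\ell})}(\rnp,\rn)$, which gives $\nabla^\ell u\in L^{F_\diamond}(\rnp)$ and $\tr u\in L^{F_{\frac n\ell}}(\rn)$. It contains no information about first-order (tangential) weak derivatives of the trace, regardless of how $F$ is chosen; even $F(t)=t^p$, $p>n/\ell$, gives only $\tr u\in L^\infty(\rn)$. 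So the claim that ``a further iteration'' of Theorem~\ref{thm4} places $\nabla_x u(\cdot,0)$ in $L^M(\rn)$ does not go through, and without it your second step (estimating $\mathcal N'(u(\cdot,0))\,\nabla_x u(\cdot,0)$) has no $\nabla_x u(\cdot,0)$ to work with. Also, even with $\|u(\cdot,0)\|_{L^\infty}$ small, a sup-bound on $\mathcal N'(u(\cdot,0))$ is not enough: the Cauchy-in-$W^{1,M}$ estimate for $\mathcal N(u_j)-\mathcal N(u_{j-1})$ in the paper actually hinges on the Orlicz--H\"older inequality $\|uv\|_{L^M}\leq c\|u\|_{L^{M_{n/\ell}}}\|v\|_{L^{B_{M,n/\ell}}}$ and the decomposition $Q_1^{-1}Q_2^{-1}\leq B_{M,n/\ell}^{-1}$ (Lemmas~\ref{lem:L:B}, \ref{lem:MnD}, \ref{lem:L:B-norm}), not merely on boundedness of $\mathcal N'$.

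The paper closes this gap differently: it works directly with the boundary iterates $u_1=I_\ell f$, $u_{j+1}=I_\ell(\mathcal N(u_j))+u_1$ on $\rn$, and proves that $\{u_j\}$ is Cauchy in $W^{1,M_{\frac n\ell}}(\rn)$ and $\{\mathcal N(u_j)\}$ is Cauchy in $W^{1,M}(\rn)$. The point is that $I_\ell$ maps $W^{1,M}(\rn)\to W^{1,M_{\frac n\ell}}(\rn)$ (commuting with tangential derivatives plus Lemma~\ref{Sobolev-Riesz}), so the $W^{1}$-regularity of the trace is produced inductively within the iteration itself, not imported from the interior Sobolev result. Once $\mathcal N(u)\in W^{1,M}(\rn)$ is established, the paper also avoids building a higher-order trace-regularity lemma: it sets $w=\Delta^{m-1}u$, notes that $w$ solves a second-order Neumann problem with datum $\mathcal N(u)+f$, applies Lemma~\ref{lem:reg} to get $w\in C^1(\overline{\rnp})$, and then invokes classical elliptic regularity (ADN) to conclude $u\in C^\ell(\overline{\rnp})$. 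Your proposed direct convolution argument with $\mathcal P_m$ up to order $\ell$ would require a higher-order analogue of Lemma~\ref{lem:reg}, which the paper does not (and need not) prove.
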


\section{Auxiliary results on Young functions and their conjugates}\label{S:aux}

This section is devoted to some specific properties of Young functions in connection with their Sobolev conjugates, under the assumptions imposed in our main results.
Throughout the section, we assume that $\alpha\in (0,n)$.

\begin{lem}
\label{index-lemma}
Let $A$ be a finite-valued Young function fulfilling the condition \eqref{conv-alpha} and let $A_{\frac n\alpha}$ be its Sobolev conjugate given by \eqref{A-alpha}. Then
\begin{align}
    \label{2024-1}
    \frac{A^{-1}(t)}{A_{\frac n \alpha}^{-1}(t)}\leq t^{\frac \alpha n}\qquad \text{for $t >0$.}
\end{align}
Assume, in addition, that 
\begin{align}
    \label{2024-2}
    I(A)<{\frac n \alpha}.
\end{align}
Then, there exists a constant $c$ such that
\begin{align}
    \label{2024-3}
    \frac{A^{-1}(t)}{A_{\frac n \alpha}^{-1}(t)}\geq c \,t^{\frac \alpha n}\qquad \text{for $t >0$.}
\end{align}
\end{lem}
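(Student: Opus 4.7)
The plan is to reduce both inequalities to pointwise estimates on $H_{A,\frac n\alpha}$, since by \eqref{A-alpha} one has $A_{\frac n\alpha}^{-1}(t)=H_{A,\frac n\alpha}(A^{-1}(t))$, so if we set $s=A^{-1}(t)$ and $p=\alpha/(n-\alpha)$ the claimed bounds \eqref{2024-1} and \eqref{2024-3} translate into the two-sided estimate
\[
H_{A,\frac n\alpha}(s)\;\geq\;\tfrac{s}{A(s)^{\alpha/n}}\qquad\text{and}\qquad H_{A,\frac n\alpha}(s)\;\leq\;c''\,\tfrac{s}{A(s)^{\alpha/n}}
\]
for $s>0$, respectively. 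Note that $1/(p+1)=(n-\alpha)/n$, and $p/(p+1)=\alpha/n$, which explains why the exponent $\alpha/n$ is the right one.

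For the lower bound, which is unconditional, I would use property \eqref{2024-5}: since $A(\tau)/\tau$ is non-decreasing, the function $\tau/A(\tau)$ is non-increasing on $(0,\infty)$, so
\[
\int_0^s \Bigl(\tfrac{\tau}{A(\tau)}\Bigr)^{p}d\tau \;\geq\; \Bigl(\tfrac{s}{A(s)}\Bigr)^{p} s.
\]
Raising to the power $1/(p+1)=(n-\alpha)/n$ yields $H_{A,\frac n\alpha}(s)\geq s/A(s)^{\alpha/n}$, which is exactly \eqref{2024-1} after substituting $s=A^{-1}(t)$.

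For the upper bound, I would use the characterization of the upper Matuszewska--Orlicz index: the assumption $I(A)<n/\alpha$ allows one to pick $q\in(I(A),n/\alpha)$ such that, for some constant $c$,
\[
\tfrac{A(s)}{A(\tau)}\;\leq\; c\,\Bigl(\tfrac{s}{\tau}\Bigr)^{q}\qquad\text{for } 0<\tau\leq s,
\]
equivalently $\tau/A(\tau)\leq c\, s^{q}\tau^{1-q}/A(s)$. Plugging this into the definition of $H_{A,\frac n\alpha}$ and using that $p(1-q)>-1$ precisely because $q<n/\alpha$, the resulting power integral converges and evaluates to a constant multiple of $s^{p+1}/A(s)^{p}$. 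Taking the $1/(p+1)$-th power and using $p/(p+1)=\alpha/n$ produces $H_{A,\frac n\alpha}(s)\leq c''\, s/A(s)^{\alpha/n}$, from which \eqref{2024-3} follows by setting $s=A^{-1}(t)$.

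The main obstacle here is not the inequality manipulations, which are essentially bookkeeping of exponents, but rather invoking the correct characterization of $I(A)<n/\alpha$ in terms of a power majorant of $A$. The threshold $q<n/\alpha$ must be strict in order to guarantee integrability of $\tau^{p(1-q)}$ near $0$, and this is exactly what the hypothesis $I(A)<n/\alpha$ provides; in the borderline case $I(A)=n/\alpha$ one should expect an extra logarithmic factor in \eqref{2024-3}, and the argument would break down without the strict inequality.
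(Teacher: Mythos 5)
Your proof is correct and follows essentially the same route as the paper's. For the unconditional lower bound both arguments exploit the monotonicity of $t\mapsto A(t)/t$ to bound the integrand in $H_{A,\frac n\alpha}$ from below by its value at the right endpoint; for the upper bound both use the standard power-majorant characterization of $I(A)<\frac n\alpha$ (your exponent $q$ corresponds to the paper's $\frac n\alpha-\varepsilon$) and then observe that the resulting power integral converges near $0$ precisely because $q<\frac n\alpha$.
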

\begin{proof}
 The property \eqref{2024-5} implies that
 \begin{align}
     \label{2024-4}
     H_{A,\frac n \alpha}(t)& =  \bigg(\int _0^t \bigg(\frac \tau{A(\tau)}\bigg)^{\frac \alpha {n-\alpha}}\, d\tau\bigg)^{\frac {n-\alpha}{n}} \geq \bigg(\frac t{A(t)}\bigg)^{\frac \alpha n}\bigg(\int_0^t d\tau\bigg)^{\frac {n-\alpha}{n}}= \frac t{A(t)^{\frac \alpha n}} \quad \text{for $t>0$.}
 \end{align}
 Hence, 
 $$t\leq A(t)^{\frac \alpha n} H_{A,\frac n \alpha}(t) \quad \text{for $t>0$,}$$
 an equivalent form of \eqref{2024-1}.
\\ Next, observe that the assumption \eqref{2024-2}
is equivalent to the fact that for every $\varepsilon>0$ there exists a constant $c$ such that
\begin{align}
    \label{2024-6}
    \frac{A(t)}{t^{{\frac n \alpha}-\varepsilon}}\leq c \frac{A(\tau)}{\tau^{{\frac n \alpha}-\varepsilon}} \quad \text{if $t \geq \tau$.}
\end{align}
Thereby,
 \begin{align}
     \label{2024-7}
     H_{A,\frac n \alpha}(t)& =  \bigg(\int _0^t \bigg(\frac \tau{A(\tau)}\bigg)^{\frac \alpha {n-\alpha}}\, d\tau\bigg)^{\frac {n-\alpha}{n}}  = 
      \bigg(\int _0^t \bigg(\frac {\tau^{{\frac n \alpha}-\varepsilon}}{A(\tau)}\bigg)^{\frac \alpha {n-\alpha}}\tau^{-1+\frac {\varepsilon \alpha}{n-\alpha}}\, d\tau\bigg)^{\frac {n-\alpha}{n}} 
      \\ \nonumber & \leq c \frac{t^{1-\frac {\varepsilon \alpha} n}}{A(t)^{\frac \alpha n}}
 \bigg(\int _0^t \tau^{-1+\frac {\varepsilon \alpha}{n-\alpha}}\, d\tau\bigg)^{\frac {n-\alpha}{n}} = c' \frac{t}{A(t)^{\frac \alpha n}}
     \quad \text{for $t>0$,}
 \end{align}
for suitable constants $c$ and $c'$. Consequently,
$$c't\geq A(t)^{\frac \alpha n} H_{A,\frac n \alpha}(t) \quad \text{for $t>0$,}$$
whence \eqref{2024-3} follows.
\end{proof}

The statement of the next lemma involves the notion of quasi non-decreasing function. This means that the relevant function fulfills the definition of non-decreasing monotonicity, up to a multiplicative constant.

\begin{lem}\label{quotient}
Let $A$ be a finite-valued Young function fulfilling the condition \eqref{conv-alpha}.
\\   (i) Assume that $A\in \nabla_2$ globally.
Then, there exists $\delta >0$ such that the function 
\begin{equation}\label{nov20}
\dfrac{A_{\frac{n}{\alpha}}(t)}{t^{\frac{n}{n-\alpha}+\delta}} \quad \text{is non-decreasing in $(0, \infty)$.}
\end{equation}
(ii) Assume that $A\in \nabla_2$ near $0$, and satisfies the condition \eqref{convinf}.
Then, there exists $\delta >0$ such that the function 
\begin{equation}\label{nov20bis}
\dfrac{A_{\frac{n}{\alpha}}(t)}{t^{\frac{n}{n-\alpha}+\delta}} \quad \text{is quasi non-decreasing in $(0, \infty)$.}
\end{equation}
\end{lem}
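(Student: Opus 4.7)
The plan is to reduce both parts to the monotonicity in the variable $s$ of
\[
G_\delta(s) \,=\, \frac{A(s)}{H_{A,\frac{n}{\alpha}}(s)^{\frac{n}{n-\alpha}+\delta}}
\]
via the change of variable $t=H_{A,\frac{n}{\alpha}}(s)$, under which $A_{\frac{n}{\alpha}}(t)/t^{\frac{n}{n-\alpha}+\delta}=G_\delta(s)$ by \eqref{A-alpha}; on the complement of the image of $H_{A,\frac{n}{\alpha}}$ the conjugate $A_{\frac{n}{\alpha}}$ is infinite by \eqref{Aninf}, so the statement is vacuous there. Writing $H=H_{A,\frac{n}{\alpha}}$ and $F(s)=H(s)^{n/(n-\alpha)}=\int_0^s(\tau/A(\tau))^{\alpha/(n-\alpha)}\,d\tau$, a direct logarithmic differentiation yields
\[
\frac{G_\delta'(s)}{G_\delta(s)} \,=\, \frac{a(s)}{A(s)} - \Big(1+\delta\tfrac{n-\alpha}{n}\Big)\frac{F'(s)}{F(s)},
\]
so $G_\delta$ is non-decreasing if and only if $sa(s)/A(s)\ge(1+\delta\tfrac{n-\alpha}{n})\,sF'(s)/F(s)$.

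A universal upper bound on the right-hand side is available for every Young function: the integrand $g(\tau)=(\tau/A(\tau))^{\alpha/(n-\alpha)}$ is non-increasing because $A(\tau)/\tau$ is non-decreasing by \eqref{2024-5}, and hence $F(s)=\int_0^s g\ge s\,g(s)=sF'(s)$, giving $sF'(s)/F(s)\le 1$ for every $s>0$. For Part~(i), global $\nabla_2$ combined with \eqref{equivnabla2} yields $sa(s)/A(s)\ge p>1$ on $(0,\infty)$, so the criterion above is satisfied by any $\delta\le(p-1)n/(n-\alpha)$, which proves \eqref{nov20}.

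For Part~(ii), the local $\nabla_2$ assumption supplies $sa(s)/A(s)\ge p>1$ only on some interval $[0,s_0]$, so the preceding argument gives monotonicity of $G_\delta$ (with $\delta=(p-1)n/(n-\alpha)$) solely on that interval. To cover $s\ge s_0$ I will fall back on the case $\delta=0$, which is automatic for any Young function: indeed $sa(s)/A(s)\ge 1$ by \eqref{aA} and $sF'(s)/F(s)\le 1$ by the above, so $G_0=A/H^{n/(n-\alpha)}$ is non-decreasing on all of $(0,\infty)$. Decomposing $G_\delta=G_0\cdot H^{-\delta}$ and invoking \eqref{convinf} to get $H(\infty)<\infty$, one finds for $s_0\le s_1\le s_2$
\[
\frac{G_\delta(s_1)}{G_\delta(s_2)} \,=\, \frac{G_0(s_1)}{G_0(s_2)}\Big(\frac{H(s_2)}{H(s_1)}\Big)^\delta \,\le\, \Big(\frac{H(\infty)}{H(s_0)}\Big)^\delta \,=:\, C,
\]
since $G_0$ is non-decreasing and $H(s_i)\in[H(s_0),H(\infty))$. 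The mixed case $s_1\le s_0\le s_2$ follows by chaining, $G_\delta(s_1)\le G_\delta(s_0)\le CG_\delta(s_2)$, while the case $s_1\le s_2\le s_0$ is already monotone; this proves \eqref{nov20bis}.

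The main obstacle is the patching in Part~(ii): isolating the fact that the $\delta=0$ monotonicity stems from the Young function structure alone (hence remains at our disposal where $\nabla_2$ fails) and observing that \eqref{convinf} supplies exactly the boundedness of $H$ needed to absorb the otherwise uncontrolled factor $H^{-\delta}$ into a multiplicative constant. Everything else is routine log-differentiation and a one-variable monotonicity chase.
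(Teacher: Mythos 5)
Your proof is correct, and Part (i) is essentially the paper's argument in a different coordinate: you push the computation through the substitution $t = H_{A,\frac n\alpha}(s)$ and work with $G_\delta(s)=A(s)/H(s)^{\frac n{n-\alpha}+\delta}$, whereas the paper differentiates $A_{\frac n\alpha}(t)/t^{\frac n{n-\alpha}+\delta}$ directly and lands on the same ratio $H(s)a(s)/(A(s)H'(s))$. The logarithmic-derivative criterion, the bound $sF'(s)/F(s)\le 1$ coming from \eqref{2024-5}, and the use of \eqref{equivnabla2} to get $sa(s)/A(s)\ge 1+\varepsilon$ all match; the only difference is cosmetic. Where you genuinely diverge is Part (ii). The paper keeps the exponent $\frac{n}{n-\alpha}+\delta$ throughout, gets monotonicity on $(0,t_2)$ with $t_2 = \min\{t_0,t_1\}$, and then, when $t_2=t_1<t_0$, uses the monotonicity of $A_{\frac n\alpha}$ itself together with the two-sided bound $t_1\le t\le t_0$ to bound the quotient on $(t_1,t_0)$ by a constant and close the quasi-monotonicity. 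Your approach instead isolates the observation that the $\delta=0$ quotient $G_0=A/F$ is non-decreasing for \emph{every} Young function satisfying \eqref{conv-alpha} (via $sa/A\ge 1$ from \eqref{aA} and $sF'/F\le 1$), factors $G_\delta = G_0\, H^{-\delta}$, and uses \eqref{convinf} to conclude that $H^{-\delta}$ is merely pinched between two positive constants on $[s_0,\infty)$. This is cleaner in that it exposes exactly which hypothesis ($A\in\nabla_2$ near $0$) buys the strict exponent and which hypothesis (\eqref{convinf}) absorbs the defect at infinity, rather than shuttling through $t_0$ and $t_1$; on the other hand, the paper's patching is shorter to state since $A_{\frac n\alpha}$'s monotonicity is free. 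Both are correct and roughly of the same length. One small remark: your criterion is stated as an ``if and only if'', but only the sufficiency direction is needed (and valid a.e., which is all monotonicity requires since the functions involved are absolutely continuous); this is harmless but worth noting.
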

\begin{proof}
{Part (i).} Set 
\begin{align}
    \label{2024-27}
    t_0 = \bigg(\int _0^\infty\bigg(\frac t{A(t)}\bigg)^{\frac \alpha {n-\alpha}}\, dt\bigg)^{\frac {n-\alpha}{n}}\in (0, \infty].
\end{align}
One has that $A_{\frac{n}{\alpha}}(t)<\infty$ if $t\in [0, t_0)$. Moreover, if $t_0<\infty$, namely if \eqref{convinf} is in force, then 
$A_{\frac{n}{\alpha}}(t)=\infty$ for $t\in (t_0, \infty)$. Thus, it suffices to prove that, whatever $t_0$ is, 
\begin{align}
    \label{2024-261}
    \dfrac{A_{\frac{n}{\alpha}}(t)}{t^{\frac{n}{n-\alpha}+\delta}} \quad \text{is non-decreasing in $(0, t_0)$.}
\end{align}
 Since $A \in \n$, by the property \eqref{equivnabla2} there exists $\varepsilon >0$ such that 
\begin{equation}\label{nov21}
\dfrac{ta(t)}{A(t)}\geq 1+\varepsilon \quad \text{for $t\in (0,t_0)$,}
\end{equation}
where $a$ is the function appearing in the representation formula \eqref{young}.
Given $\delta >0$, one has that 
\begin{align}\label{nov22}
\frac{d}{dt} \bigg(\frac{A_{\frac{n}{\alpha}}(t)}{t^{\frac{n}{n-\alpha}+\delta}}\bigg) >0  \quad \text{for $t\in (0,t_0)$}
\end{align}
if and only if
\begin{align}\label{nov23}
\frac{tA_{\frac{n}{\alpha}}'(t)}{A_{\frac{n}{\alpha}}(t)}>  \frac{n}{n-\alpha}+\delta \quad \text{for $t\in (0,t_0)$.}
\end{align}
On the other hand,
\begin{align}\label{nov24}
\frac{tA_{\frac{n}{\alpha}}'(t)}{A_{\frac{n}{\alpha}}(t)} 
= \frac{t a(H_{A,\frac{n}{\alpha}}^{-1}(t))}{A(H^{-1}_{A,\frac{n}{\alpha}}(t)) H'_{A,\frac{n}{\alpha}}(H^{-1}_{A,\frac{n}{\alpha}}(t))} \quad \text{for $t\in (0,t_0)$.}
\end{align}
We have that
\begin{align}\label{nov25}
\frac{H_{A,\frac{n}{\alpha}}(s) a(s)}{A(s) H'_{A,\frac{n}{\alpha}}(s)} &=  \frac {n}{n-\alpha}s^{-\frac{\alpha}{n-\alpha}} a(s) A(s)^{\frac \alpha{n-\alpha}-1}\int_0^s \bigg(\frac r{A(r)}\bigg)^{\frac \alpha{n-\alpha}}dr 
\\  \nonumber & \geq 
\frac {n}{n-\alpha}(1+\varepsilon)   s^{-\frac{n}{n-\alpha}}  A(s)^{\frac \alpha{n-\alpha}}\int_0^s \bigg(\frac r{A(r)}\bigg)^{\frac \alpha{n-\alpha}}dr
\\  \nonumber & \geq 
\frac {n}{n-\alpha}(1+\varepsilon)   
\quad \text{for $s>0$,}
\end{align}
where the first inequality holds thanks to equation \eqref{nov21}, and the second one since, by \eqref{2024-5},
$$\frac 1s\int_0^s \bigg(\frac r{A(r)}\bigg)^{\frac \alpha{n-\alpha}}dr \geq  \bigg(\frac s{A(s)}\bigg)^{\frac \alpha{n-\alpha}} \quad \text{for $s>0$.}
$$
 Owing to equation \eqref{nov24}, an application of the inequality \eqref{nov25} with $s=H^{-1}_{A,\frac{n}{\alpha}}(t)$ yields \eqref{nov23}, whence \eqref{nov22} and \eqref{2024-261} follow.
 \\
 {Part (ii).} As noticed above, under the assumption \eqref{convinf}, one has that $t_0<\infty$. Since we are assuming that $A\in \n$   near $0$, there exist constants $\varepsilon$ and $t_1$
  such that
  \begin{align}
      \label{2024-265}
      \dfrac{ta(t)}{A(t)}\geq 1+\varepsilon \quad \text{for $t\in (0,t_1)$.}
  \end{align}
Set $t_2=\min\{t_0, t_1\}$.
 The argument offered in the proof of Part (i) tells us that 
\begin{equation*}
\dfrac{A_{\frac{n}{\alpha}}(t)}{t^{\frac{n}{n-\alpha}+\delta}} \quad \text{is increasing in $(0, t_2)$.}
 \end{equation*}
 If $t_2=t_0$, then \eqref{nov20}, and hence \eqref{nov20bis}, hold, inasmuch as $A_{\frac{n}{\alpha}}(t)=\infty$ for $t>t_0$. On the other hand, if $t_2=t_1$, then, by the monotonicity of $A_{\frac{n}{\alpha}}$,
 $$\dfrac{A_{\frac{n}{\alpha}}(t)}{t^{\frac{n}{n-\alpha}+\delta}}\leq \Big(\frac{t_0}{t_1}\Big)^{\frac{n}{n-\alpha}+\delta}
 \dfrac{A_{\frac{n}{\alpha}}(t_0)}{t_0^{\frac{n}{n-\alpha}+\delta}} 
 \qquad \text{if $t\in (t_1,t_0)$.}$$
 Altogether, the property \eqref{nov20bis} follows also in this case.
\end{proof}

Let $\widehat  H_{A,\frac{n}{\alpha}}   : [0, \infty) \to [0, \infty)$ be a variant of the function $H_{A,\frac{n}{\alpha}}$, defined with $\frac{t}{A(t)}$ replaced with $\frac{1}{a(t)}$, where $a$ is the function appearing in \eqref{young}.
Namely, 
\begin{equation}\label{hatH}
\widehat   H_{A,\frac{n}{\alpha}}(t)= \bigg(\int _0^t \bigg(\frac 1{a(\tau)}\bigg)^{\frac {\alpha}{n-\alpha}}\, d\tau\bigg)^{\frac {n-\alpha}{n}} \quad \hbox{for $t \geq 0$.}
\end{equation}
Moreover,
 let $ \widehat  A_{\frac{n}{\alpha}}$ be the Young function given by
\begin{equation}\label{hatAn}
  \widehat A_{\frac{n}{\alpha}}(t)= A\big(  \widehat H_{A,\frac{n}{\alpha}}^{-1}(t)\big) \quad \hbox{for $t \geq 0$.}
\end{equation}
The latter function is equivalent to the original Sobolev conjugate $A_{\frac{n}{\alpha}}$. This is the content of the following lemma. 

\begin{lem}\label{Anbis}
Let $A$ be a  finite-valued Young function fulfilling the condition \eqref{conv-alpha} and let $A_{\frac{n}{\alpha}}$ and $\widehat A_{\frac{n}{\alpha}}$ be the functions defined by  \eqref{A-alpha} and \eqref{hatAn}.
Then,  
\begin{equation}\label{equivsob}
 \widehat  A_{\frac{n}{\alpha}}(t/2)\leq     {A_{\frac{n}{\alpha}}}(t) \leq    \widehat A_{\frac{n}{\alpha}}(t) \quad  \hbox{for $t \geq 0$.}
\end{equation}
\end{lem}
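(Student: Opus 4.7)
The plan is to prove the two inequalities in \eqref{equivsob} by first comparing, pointwise, the integrands appearing in the definitions of $H_{A,\frac n\alpha}$ and $\widehat H_{A,\frac n\alpha}$, and then transferring these estimates to the Sobolev conjugates via the monotonicity of $A$ and of generalized inverses.

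First, from \eqref{aA}, namely $A(\tau) \leq \tau a(\tau) \leq A(2\tau)$ for $\tau>0$, I will read off two pointwise inequalities. The left inequality yields
\[
\frac{1}{a(\tau)} \leq \frac{\tau}{A(\tau)},
\]
while the right inequality, after the substitution $\tau=\sigma/2$, gives
\[
\frac{\sigma}{A(\sigma)} \leq \frac{2}{a(\sigma/2)} \qquad \text{for $\sigma>0$.}
\]
Raising both inequalities to the power $\alpha/(n-\alpha)$ and integrating, the first one leads directly to $\widehat H_{A,\frac n\alpha}(t) \leq H_{A,\frac n\alpha}(t)$. For the second, a change of variable $u=\tau/2$ produces
\[
H_{A,\frac n\alpha}(t)^{\frac{n}{n-\alpha}} \leq 2^{\frac{\alpha}{n-\alpha}+1}\, \widehat H_{A,\frac n\alpha}(t/2)^{\frac{n}{n-\alpha}},
\]
whence, after taking the $(n-\alpha)/n$-th power, $H_{A,\frac n\alpha}(t) \leq 2\, \widehat H_{A,\frac n\alpha}(t/2)$. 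Thus the two-sided bound
\[
\widehat H_{A,\frac n\alpha}(t) \leq H_{A,\frac n\alpha}(t) \leq 2\, \widehat H_{A,\frac n\alpha}(t/2), \qquad t\geq 0,
\]
will be established.

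Next, I will transfer these inequalities to the generalized left-continuous inverses, which reverse the order of non-decreasing functions. The first inequality gives $H_{A,\frac n\alpha}^{-1}(s) \leq \widehat H_{A,\frac n\alpha}^{-1}(s)$, and hence, by monotonicity of $A$,
\[
A_{\frac n\alpha}(s) = A\bigl(H_{A,\frac n\alpha}^{-1}(s)\bigr) \leq A\bigl(\widehat H_{A,\frac n\alpha}^{-1}(s)\bigr) = \widehat A_{\frac n\alpha}(s),
\]
which is the right inequality in \eqref{equivsob}. For the left one, the inequality $H_{A,\frac n\alpha}(t) \leq 2\widehat H_{A,\frac n\alpha}(t/2)$ means that the left-continuous inverse of $t\mapsto 2\widehat H_{A,\frac n\alpha}(t/2)$, which equals $s\mapsto 2\widehat H_{A,\frac n\alpha}^{-1}(s/2)$, is pointwise dominated by $H_{A,\frac n\alpha}^{-1}$. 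Using $\widehat H_{A,\frac n\alpha}^{-1}(s/2) \leq 2\widehat H_{A,\frac n\alpha}^{-1}(s/2) \leq H_{A,\frac n\alpha}^{-1}(s)$ and applying the non-decreasing function $A$ yields
\[
\widehat A_{\frac n\alpha}(s/2) = A\bigl(\widehat H_{A,\frac n\alpha}^{-1}(s/2)\bigr) \leq A\bigl(H_{A,\frac n\alpha}^{-1}(s)\bigr) = A_{\frac n\alpha}(s),
\]
completing the proof.

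The argument is essentially routine; the only mild subtlety is the bookkeeping of exponents and the factor of $2$ arising from the change of variable $u=\tau/2$, and the careful handling of generalized left-continuous inverses (where one must verify that the inverse of the rescaled function $t\mapsto 2\widehat H_{A,\frac n\alpha}(t/2)$ is $s\mapsto 2\widehat H_{A,\frac n\alpha}^{-1}(s/2)$). No deep fact beyond \eqref{aA} and the monotonicity of $A$ is needed.
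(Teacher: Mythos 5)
Your proposal is correct and takes essentially the same approach as the paper's proof, which tersely asserts that \eqref{aA} gives $\widehat H_{A,\frac n\alpha}(t)\leq H_{A,\frac n\alpha}(t)\leq 2\widehat H_{A,\frac n\alpha}(t)$ and that \eqref{equivsob} follows by passing to inverses. Your intermediate bound $H_{A,\frac n\alpha}(t)\leq 2\widehat H_{A,\frac n\alpha}(t/2)$ is a marginally sharper version of the paper's, obtained by the same substitution in \eqref{aA}, and your transfer through the generalized inverses and the monotonicity of $A$ is exactly the step the paper leaves implicit.
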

\begin{proof}
From  the inequalities \eqref{aA} one can deduce that
\begin{equation}
    \label{2024-53}
    \widehat   H_{A,\frac{n}{\alpha}}(t)\leq  H_{A,\frac{n}{\alpha}}(t)\leq 2\widehat   H_{A,\frac{n}{\alpha}}(t) \quad \text{for $t \geq 0$.}
\end{equation}
Equation \eqref{equivsob} follows from \eqref{2024-53}.
\end{proof}

We still need to introduce more
 functions  associated with 
 the Young function $A$. They are the function
 $\overline H_{A,\frac{n}{\alpha}} : (0, \infty) \to [0, \infty)$  given by
\begin{equation}\label{overA}
\overline{H}_{A,\frac{n}{\alpha}}(t)=\frac{t}{\widehat H_{A,\frac{n}{\alpha}}(t)} \qquad \text{for $t>0$,}
\end{equation}
and the Young function $B_{A,\frac{n}{\alpha}}$  defined as
\begin{equation}\label{E:B}
B_{A,\frac{n}{\alpha}}(t)=\int_0^t \frac{A(\overline{H}_{A,\frac{n}{\alpha}}^{-1}(s))}{s}\,ds \quad \text{for} \quad t>0.
\end{equation}
\\
Moreover, let $E$ be the function from \eqref{Enew}, with $\ell =\alpha$. Namely,
\begin{equation}\label{Ealpha}
E(t) = \int_0^{A^{-1}(t)} a(s)^{\frac{n}{\alpha}}\, ds  \qquad \text{for $t \geq 0$.}
\end{equation}
Then, we denote by
 $B_{E,\frac{n}{\alpha}}$ the function obtained from $E$, via a process analogous to the one which produces $B_{A,\frac{n}{\alpha}}$ from $A$. In particular, in \eqref{hatH}, the function $a(t)$ has to be replaced with $(a(A^{-1}(t)))^{\frac{n}{\alpha}-1}$, the left-continuous derivative of $E$. Namely,  
\begin{align}\label{2024-30}
  \widehat   H_{E,\frac{n}{\alpha}}(t)= \bigg(\int _0^t  \frac 1{a(A^{-1}(\tau))}\, d\tau\bigg)^{\frac {n-\alpha}{n}} \quad \hbox{for $t \geq 0$,}  
\end{align}
\begin{equation}\label{2024-31}
\overline{H}_{E,\frac{n}{\alpha}}(t)=\frac{t}{\widehat H_{E,\frac{n}{\alpha}}(t)} \qquad \text{for $t>0$,}
\end{equation}
 and 
\begin{equation}\label{2024-32}
B_{E,\frac{n}{\alpha}}(t)=\int_0^t \frac{E(\overline{H}_{E,\frac{n}{\alpha}}^{-1}(s))}{s}\,ds \quad \text{for} \quad t\geq 0.
\end{equation}
Also, we set 
\begin{equation}\label{hatEn}
  \widehat E_{\frac{n}{\alpha}}(t)= E\big(  \widehat H_{E,\frac{n}{\alpha}}^{-1}(t)\big) \quad \hbox{for $t \geq 0$.}
\end{equation}

\begin{lem}\label{lem:L:B}
Let $A$ be a   finite-valued Young function satisfying~\eqref{conv-alpha} and let  $\widehat H_{A,\frac{n}{\alpha}}$ be the function defined by equation \eqref{hatH}. Then, $\overline{H}_{A,\frac{n}{\alpha}}$
is non-decreasing,  $B_{A,\frac{n}{\alpha}}$ is a Young function and
\begin{equation}\label{E:equivalence_B}
A(\overline{H}_{A,\frac{n}{\alpha}}^{-1}(t/2)) \leq B_{A,\frac{n}{\alpha}}(t) \leq A (\overline{H}_{A,\frac{n}{\alpha}}^{-1}(t)) \quad \text{for} \quad t>0.
\end{equation}
\end{lem}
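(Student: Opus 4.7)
The plan is to reduce everything to the monotonicity of the integrand $b(s) := A(\overline H_{A,\frac{n}{\alpha}}^{-1}(s))/s$ appearing in the definition of $B_{A,\frac{n}{\alpha}}$.

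First, I would establish that $\overline H_{A,\frac{n}{\alpha}}$ is non-decreasing. Since $a$ is non-decreasing, the function $\tau \mapsto (1/a(\tau))^{\alpha/(n-\alpha)}$ is non-increasing, so its average over $(0,t)$, which equals $\widehat H_{A,\frac{n}{\alpha}}(t)^{n/(n-\alpha)}/t$, is non-increasing in $t$. Equivalently, $t^{(n-\alpha)/n}/\widehat H_{A,\frac{n}{\alpha}}(t)$ is non-decreasing, and multiplying by the non-decreasing factor $t^{\alpha/n}$ shows that
\[
\overline H_{A,\frac{n}{\alpha}}(t) \;=\; \frac{t}{\widehat H_{A,\frac{n}{\alpha}}(t)} \;=\; t^{\alpha/n}\cdot \frac{t^{(n-\alpha)/n}}{\widehat H_{A,\frac{n}{\alpha}}(t)}
\]
is non-decreasing, so its generalized left-continuous inverse $\overline H_{A,\frac{n}{\alpha}}^{-1}$ is well defined and non-decreasing.

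Next, I would verify that $b$ is non-decreasing. Since $\overline H_{A,\frac{n}{\alpha}}^{-1}$ is non-decreasing by the previous step, it suffices to show that $b\circ \overline H_{A,\frac{n}{\alpha}}$ is non-decreasing. Substituting $s=\overline H_{A,\frac{n}{\alpha}}(t)=t/\widehat H_{A,\frac{n}{\alpha}}(t)$ yields
\[
b\bigl(\overline H_{A,\frac{n}{\alpha}}(t)\bigr) \;=\; \frac{A(t)\,\widehat H_{A,\frac{n}{\alpha}}(t)}{t} \;=\; \frac{A(t)}{t}\cdot \widehat H_{A,\frac{n}{\alpha}}(t),
\]
and both factors on the right-hand side are non-decreasing in $t$: the first by \eqref{2024-5} (which reflects the convexity of $A$ and $A(0)=0$), and the second because $\widehat H_{A,\frac{n}{\alpha}}$ is, by definition, a positive power of an integral of a non-negative function. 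Consequently, $B_{A,\frac{n}{\alpha}}$, being the primitive of the non-decreasing non-negative function $b$, starting at $0$, is a non-negative convex function vanishing at $0$, hence a Young function.

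Finally, the two-sided bound \eqref{E:equivalence_B} follows directly from the monotonicity of $b$: for the upper bound,
\[
B_{A,\frac{n}{\alpha}}(t) \;=\; \int_0^t b(s)\,ds \;\leq\; t\,b(t) \;=\; A\bigl(\overline H_{A,\frac{n}{\alpha}}^{-1}(t)\bigr),
\]
while for the lower bound,
\[
B_{A,\frac{n}{\alpha}}(t) \;\geq\; \int_{t/2}^t b(s)\,ds \;\geq\; \tfrac{t}{2}\,b(t/2) \;=\; A\bigl(\overline H_{A,\frac{n}{\alpha}}^{-1}(t/2)\bigr).
\]
The only substantive step is the second one: once $b\circ \overline H_{A,\frac{n}{\alpha}}$ is rewritten as the product $(A(t)/t)\cdot \widehat H_{A,\frac{n}{\alpha}}(t)$, the monotonicity of each factor is transparent, and everything else is formal.
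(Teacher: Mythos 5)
Your proof is correct and follows essentially the same route as the paper: both establish the monotonicity of $\overline{H}_{A,\frac{n}{\alpha}}$, then reduce the remaining claims to the monotonicity of $s\mapsto A(\overline{H}_{A,\frac{n}{\alpha}}^{-1}(s))/s$, which in both cases is checked via the factorization $(A(t)/t)\,\widehat H_{A,\frac n\alpha}(t)$ after composing with $\overline{H}_{A,\frac n\alpha}$. The only (minor) difference is in the first step: the paper verifies that $\overline{H}_{A,\frac n\alpha}^{\,n/(n-\alpha)}$ is non-decreasing by differentiating and using $n/(n-\alpha)\geq 1$ together with the monotonicity of $a$, whereas you observe directly that the average of the non-increasing function $\tau\mapsto(1/a(\tau))^{\alpha/(n-\alpha)}$ is non-increasing and then rearrange powers; both versions encode the same inequality, and your variant simply avoids the derivative computation.
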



\begin{proof}
We start by showing that $\overline{H}_{A,\frac{n}{\alpha}}$ is non-decreasing. To this end, it suffices to show that the function
$$
t\mapsto \frac{t^{\frac{n}{n-\alpha}}}{\displaystyle\int_0^t \bigg(\frac{1}{a(s)}\bigg)^{\frac{\alpha}{n-\alpha}}\,ds}
$$
is non-decreasing on $(0,\infty)$. This is  true, since the derivative of this function times  $\big(\int_0^t 1/(a(s))^{\alpha/(n-\alpha)}\,ds\big)^2$ equals
$$
t^{\frac{n}{n-\alpha}}\left(\frac{n}{(n-\alpha)\,t} \int_0^t \left(\frac{1}{a(s)}\right)^{\frac{\alpha}{n-\alpha}}\,ds - \left(\frac{1}{a(t)}\right)^{\frac{\alpha}{n-\alpha}}\right), 
$$
and the last expression is nonnegative thanks to the facts that $\frac{n}{n-\alpha}\geq 1$ and $a(t)$ is non-decreasing. 

To show that $B_{A,\frac{n}{\alpha}}$ is a Young function we need to verify that $A(\overline{H}_{A,\frac{n}{\alpha}}^{-1}(s))/s$ is non-decreasing on $(0,\infty)$. Since $\overline{H}_{A,\frac{n}{\alpha}}$ is non-decreasing, it suffices to prove that $A(t)/\overline{H}_{A,\frac{n}{\alpha}}(t)$ is non-decreasing. The latter function can be rewritten as $(A(t)/t) \widehat H_{A,\frac{n}{\alpha}}(t)$, which is a non-decreasing function, inasmuch as   both $A(t)/t$ and $\widehat H_{A,\frac{n}{\alpha}}(t)$ enjoy this property.
\\
Finally, equation \eqref{E:equivalence_B} is an easy consequence of the increasing monotonicity of the function $A(\overline{H}_{A,\frac{n}{\alpha}}^{-1}(t))/t$. 
\end{proof}

\begin{lem}\label{prop1}
 Let $A$ be a   finite-valued Young function satisfying the condition \eqref{A}.  Then, the function $E$ defined by \eqref{Ealpha} is a Young function such that $E \in \Delta_2 \cap \nabla_2$. 
\end{lem}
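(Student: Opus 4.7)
The strategy is to write $E$ explicitly in the form~\eqref{young}, identify its left-continuous density, and then push the two inequalities in~\eqref{A} through to that density by raising them to a positive power. Lemma~\ref{L:lemma_young} will then immediately give both $\Delta_2$ and $\nabla_2$.

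First I would rewrite $E$ using the change of variable $\tau=A^{-1}(s)$. Since $A$ is finite-valued, $A^{-1}$ is a.e.\ differentiable with $(A^{-1})'(s)=1/a(A^{-1}(s))$, and applying this to~\eqref{Ealpha} gives
\begin{align*}
E(t)=\int_0^t a\bigl(A^{-1}(s)\bigr)^{\frac{n}{\alpha}-1}\,ds \qquad \text{for $t\geq 0$.}
\end{align*}
Thus the left-continuous density of $E$ in the sense of~\eqref{young} is $e(t):=a(A^{-1}(t))^{\frac{n}{\alpha}-1}$. The exponent $\tfrac{n}{\alpha}-1$ is strictly positive since $\alpha\in(0,n)$; and $e$ is a composition of non-decreasing maps raised to a positive power, so it is non-decreasing, left-continuous, nonnegative and finite. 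This shows that $E$ is a (finite-valued) Young function.

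To obtain $E\in\Delta_2$ I would apply Lemma~\ref{L:lemma_young}(i) with $K=2$. The left half of~\eqref{A} reads $a(A^{-1}(2t))\leq c\,a(A^{-1}(t))$ for $t\geq 0$, and raising both sides to the positive power $\tfrac{n}{\alpha}-1$ yields
\begin{align*}
e(2t)\leq c^{\frac{n}{\alpha}-1}\,e(t) \qquad \text{for $t\geq 0$,}
\end{align*}
which is exactly condition~\eqref{E:delta2} for $e$ with $K=2$ and $c_1=c^{\frac{n}{\alpha}-1}$. Hence $E\in\Delta_2$.

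Symmetrically, for $E\in\nabla_2$ I would invoke Lemma~\ref{L:lemma_young}(ii) with the choice $K=2^{\frac{n}{\alpha}-1}$, which is strictly greater than $1$ precisely because $\alpha<n$. The right half of~\eqref{A} is $a(A^{-1}(t))\leq \tfrac{1}{2}\,a(A^{-1}(ct))$, and raising to the same positive power gives
\begin{align*}
e(t)\leq 2^{-(\frac{n}{\alpha}-1)}\,e(ct) = \frac{1}{K}\,e(ct) \qquad \text{for $t\geq 0$,}
\end{align*}
which is condition~\eqref{E:nabla2} for $e$ with that $K$ and with $c_2=c$. Therefore $E\in\nabla_2$, completing the proof. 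The only (mild) subtlety I expect is bookkeeping around the change-of-variable identity for $E$ when $A$ or $a$ have flat pieces, but this is standard for Young functions and does not affect the pointwise inequalities above.
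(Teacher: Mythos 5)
Your proof is correct and takes essentially the same approach as the paper: rewrite $E(t)=\int_0^t a(A^{-1}(s))^{\frac{n}{\alpha}-1}\,ds$ by a change of variable, observe its density is non-decreasing so $E$ is a Young function, then feed the two inequalities of \eqref{A}, raised to the power $\frac{n}{\alpha}-1$, into Lemma~\ref{L:lemma_young} with $K=2$ and $K=2^{\frac{n}{\alpha}-1}$ respectively. The paper's proof is merely more terse, leaving these bookkeeping steps implicit.
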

\begin{proof}
A change of variables in the integral on the right-hand side of equation \eqref{Ealpha} yields the alternate formula:
\begin{equation}\label{Ee}
E(t)=\int_0^t (a(A^{-1}(r)))^{\frac{n}{\alpha}-1}\,dr \qquad \text{for $t \geq 0$.}
\end{equation}
Since both $a$ and $A^{-1}$ are non-decreasing functions, equation \eqref{Ee} ensures that $E$ is a Young function. Furthermore, 
Lemma~\ref{L:lemma_young} applied with $K=2$ tells us that $E\in \Delta_2$, and the same lemma applied with
 $K=2^{\frac{n}{\alpha}-1}$
 tells us that $E\in \nabla_2$. 
\end{proof}

\begin{lem}\label{Enequiv}
Let $A$ be a   finite-valued Young function and let $E$ be the function  defined by \eqref{Ealpha}. Then, 
\begin{equation}\label{conv-Ealpha}
\int _0\bigg(\frac t{E (t)}\bigg)^{\frac {\alpha}{n-\alpha}}\, dt <
\infty,
\end{equation}
 and 
\begin{equation}\label{divinf-alpha}
\int^\infty \left(\frac{t}{E(t)}\right)^{\frac{\alpha}{n-\alpha}}\,dt=\infty.
\end{equation}
Moreover, the Sobolev conjugate $E_{\frac{n}{\alpha}}$ of $E$, defined as in \eqref{A-alpha}, fulfills:
\begin{equation}\label{nov11}
E_{\frac{n}{\alpha}}(t) \simeq  \int_0^{t^{\frac{n}{n-\alpha}}} a(s)^{\frac{n}{\alpha}}\, ds  \qquad \text{for $t \geq 0$.}
\end{equation}
\end{lem}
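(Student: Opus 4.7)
The plan is to exploit the alternative representation of $E$ established in Lemma~\ref{prop1}, namely
\[
E(t)=\int_0^t e(r)\,dr \quad \text{with} \quad e(r)=\bigl(a(A^{-1}(r))\bigr)^{\frac{n}{\alpha}-1},
\]
together with the key exponent identity
\[
e(r)^{\frac{\alpha}{n-\alpha}}
=\bigl(a(A^{-1}(r))\bigr)^{(\frac{n}{\alpha}-1)\cdot\frac{\alpha}{n-\alpha}}
=a(A^{-1}(r)).
\]
All three claims then reduce to a change of variables $s=A^{-1}(\tau)$, $d\tau = a(s)\,ds$, which cancels the factor $a(s)$ in the integrand.

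To establish \eqref{conv-Ealpha}, I would use that $e$ is non-decreasing, so $E(t)\geq (t/2)\,e(t/2)$. Combining this with the identity above gives
\[
\Bigl(\tfrac{t}{E(t)}\Bigr)^{\frac{\alpha}{n-\alpha}}
\leq 2^{\frac{\alpha}{n-\alpha}}\, e(t/2)^{-\frac{\alpha}{n-\alpha}}
= \frac{2^{\frac{\alpha}{n-\alpha}}}{a(A^{-1}(t/2))},
\]
and the change of variables $\tau=A(s)$ turns $\int_0 d\tau/a(A^{-1}(\tau/2))$ into a multiple of $\int_0 ds<\infty$. For \eqref{divinf-alpha}, I would use the opposite bound $E(t)\leq t\,e(t)$ coming from the monotonicity of $e$, which yields
\[
\Bigl(\tfrac{t}{E(t)}\Bigr)^{\frac{\alpha}{n-\alpha}}\geq \frac{1}{a(A^{-1}(t))},
\]
and the same change of variables gives $\int^\infty ds=\infty$.

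For the equivalence \eqref{nov11}, I would appeal to Lemma~\ref{Anbis} applied to $E$ (which is legitimate once \eqref{conv-Ealpha} has been verified), reducing the claim to computing $\widehat H_{E,\frac{n}{\alpha}}$ from \eqref{2024-30}. Using the exponent identity, the integrand in \eqref{2024-30} simplifies to $1/a(A^{-1}(\tau))$, and the change of variables $\tau=A(s)$ gives
\[
\widehat H_{E,\frac{n}{\alpha}}(t)=\bigl(A^{-1}(t)\bigr)^{\frac{n-\alpha}{n}}.
\]
Inverting yields $\widehat H_{E,\frac{n}{\alpha}}^{-1}(u)=A(u^{\frac{n}{n-\alpha}})$, whence
\[
\widehat E_{\frac{n}{\alpha}}(t)=E\bigl(A(t^{\frac{n}{n-\alpha}})\bigr)=\int_0^{t^{\frac{n}{n-\alpha}}} a(s)^{\frac{n}{\alpha}}\,ds,
\]
and Lemma~\ref{Anbis} delivers \eqref{nov11}.

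The only delicate point is that $A^{-1}$ is the generalized right-continuous inverse, so $A^{-1}(A(u))$ need not coincide with $u$ on intervals where $A$ is constant; however on any such interval $a\equiv 0$, so these sets contribute nothing to any of the integrals considered, and the change-of-variables computations are valid as written. This is the only step where some care is needed; everything else is a direct exponent bookkeeping exercise, so I expect no substantial obstacle.
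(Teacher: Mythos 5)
Your proposal is correct and follows essentially the same route as the paper: both rest on the representation $E'(\tau)=a(A^{-1}(\tau))^{\frac{n}{\alpha}-1}$, the exponent cancellation $E'(\tau)^{-\frac{\alpha}{n-\alpha}}=1/a(A^{-1}(\tau))$, the change of variables yielding $\widehat H_{E,\frac{n}{\alpha}}(t)=A^{-1}(t)^{\frac{n-\alpha}{n}}$, and Lemma~\ref{Anbis}. The only cosmetic difference is that you re-derive the two-sided bound on $t/E(t)$ directly from the monotonicity of $E'$, whereas the paper simply cites inequality~\eqref{aA} applied to $E$; your added remark about the right-continuous generalized inverse and flat pieces of $A$ is a correct refinement the paper leaves implicit.
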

\begin{proof}
Equation~\eqref{Ee} yields the formula
\begin{align}
    \label{2024-269}
    E'(t)= a(A^{-1}(t))^{\frac{n}{\alpha}-1}  \quad \hbox{for $t \geq 0$,}
\end{align}
for the left-continuous derivative of $E$.
Therefore,  
\begin{align}\label{nov12}
\int _0^t \bigg(\frac 1{E'(\tau)}\bigg)^{\frac {\alpha}{n-\alpha}}\, d\tau = \int _0^t \frac{d\tau}{ a(A^{-1}(\tau))} = \int _0^{A^{-1}(t) }d\tau= A^{-1}(t)  \quad \hbox{for $t \geq 0$.}
\end{align}
The properties \eqref{conv-Ealpha} and \eqref{divinf-alpha}    follow from \eqref{nov12}, via equation \eqref{aA} applied to $E$. 
\\ Finally, equation \eqref{nov11} is a consequence of   \eqref{2024-269} and of Lemma \ref{Anbis}, applied with $A$ replaced with $E$.
\end{proof}

\begin{lem}
    \label{EF}
    Assume that $A$ is a  finite-valued  Young function 
    and $E$ is defined as in \eqref{Ealpha}. Let $B_{E,\frac n\alpha}$ be the function associated with $E$ by \eqref{2024-31}.  Assume that $F$ is a Young function satisfying \eqref{F} with $\ell = \alpha$. 
    If the inequality \eqref{HNew} holds with $\ell = \alpha$, then there exists a constant $c$ such that
    \begin{align}
        \label{2024-39}
    B_{E,\frac n\alpha}^{-1}(t) F_{\frac{n}{\alpha}}^{-1}(t)  \leq c F^{-1}(t) \quad \text{for $t>0$.}
    \end{align}
\end{lem}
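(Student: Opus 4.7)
My plan is to reduce the target inequality~\eqref{2024-39}, via the hypothesis~\eqref{HNew}, to the two-sided equivalence
\begin{equation*}
B_{E,\frac{n}{\alpha}}^{-1}(t)\,E_{\frac{n}{\alpha}}^{-1}(t) \simeq E^{-1}(t) \qquad \text{for } t>0.
\end{equation*}
Indeed, once this equivalence is in force, multiplying~\eqref{HNew} (applied with $\ell=\alpha$) by $E^{-1}(t)$ and substituting yields
\begin{equation*}
B_{E,\frac n\alpha}^{-1}(t)\,F_{\frac n\alpha}^{-1}(t) \leq c\,\frac{E^{-1}(t)}{E_{\frac n\alpha}^{-1}(t)}\,F_{\frac n\alpha}^{-1}(t) \leq c\kappa\,F^{-1}(t),
\end{equation*}
which is the desired bound~\eqref{2024-39}.

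\textbf{Establishing the equivalence.} First, $E$ is a finite-valued Young function satisfying~\eqref{conv-alpha} by Lemma~\ref{Enequiv}, so Lemmas~\ref{Anbis} and~\ref{lem:L:B} are applicable with $A$ replaced by $E$. Invoking Lemma~\ref{lem:L:B} in this way and inverting, while exploiting the non-decreasing character of $\overline{H}_{E,\frac{n}{\alpha}}$, produces
\begin{equation*}
\overline{H}_{E,\frac{n}{\alpha}}(E^{-1}(t)) \leq B_{E,\frac{n}{\alpha}}^{-1}(t) \leq 2\,\overline{H}_{E,\frac{n}{\alpha}}(E^{-1}(t)).
\end{equation*}
Using the identity $\overline{H}_{E,\frac{n}{\alpha}}(s)=s/\widehat{H}_{E,\frac{n}{\alpha}}(s)$ from~\eqref{2024-31} together with the defining formula~\eqref{hatEn}, namely $\widehat E_{\frac{n}{\alpha}}(s)=E(\widehat H_{E,\frac{n}{\alpha}}^{-1}(s))$, one derives $\widehat{H}_{E,\frac{n}{\alpha}}(E^{-1}(t)) \simeq \widehat{E}_{\frac{n}{\alpha}}^{-1}(t)$, whence
\begin{equation*}
B_{E,\frac{n}{\alpha}}^{-1}(t) \simeq \frac{E^{-1}(t)}{\widehat{E}_{\frac{n}{\alpha}}^{-1}(t)}.
\end{equation*}
Finally, Lemma~\ref{Anbis} applied with $A$ replaced by $E$ gives $\widehat{E}_{\frac{n}{\alpha}} \simeq E_{\frac{n}{\alpha}}$, hence $\widehat{E}_{\frac{n}{\alpha}}^{-1} \simeq E_{\frac{n}{\alpha}}^{-1}$, and the equivalence displayed above follows.

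\textbf{Main obstacle.} The principal technical point is the consistent handling of generalized left/right-continuous inverses throughout the chain of reductions: identities such as $E(E^{-1}(t))\simeq t$ or $\widehat H_{E,\frac{n}{\alpha}}(\widehat H_{E,\frac{n}{\alpha}}^{-1}(t))\simeq t$ hold only up to absolute multiplicative constants, and one has to track these constants uniformly at every substitution, in particular when composing inverses of $E$, $\widehat E_{\frac{n}{\alpha}}$ and $\widehat H_{E,\frac{n}{\alpha}}$. Once that bookkeeping is carried out, the argument reduces to a routine chain of elementary inequalities between Young functions and their Sobolev conjugates.
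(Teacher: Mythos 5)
Your proposal is correct and follows essentially the same route as the paper: invert the two-sided bound from Lemma~\ref{lem:L:B} (with $A$ replaced by $E$) to get $B_{E,\frac n\alpha}^{-1}(t) \leq 2\,\overline H_{E,\frac n\alpha}(E^{-1}(t)) = 2\,E^{-1}(t)/\widehat E_{\frac n\alpha}^{-1}(t)$, then use Lemma~\ref{Anbis} (again with $A$ replaced by $E$) to compare $\widehat E_{\frac n\alpha}^{-1}$ with $E_{\frac n\alpha}^{-1}$, and finally couple with~\eqref{HNew}. The only difference is that you establish the full two-sided equivalence $B_{E,\frac n\alpha}^{-1}\simeq E^{-1}/E_{\frac n\alpha}^{-1}$, whereas the paper only records the one-sided upper bound, which is all the conclusion requires; also note that $\widehat H_{E,\frac n\alpha}(E^{-1}(t))=\widehat E_{\frac n\alpha}^{-1}(t)$ is an exact identity from the definition~\eqref{hatEn}, not merely an equivalence, so the "bookkeeping of constants" you flag as the main obstacle is lighter than you suggest.
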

\begin{proof}
    An application of Lemma \ref{lem:L:B}, with $A$ replaced with $E$, tells us that
    \begin{align}\label{2024-40}
    B_{E,\frac n\alpha}^{-1}(t)\leq 2
        \overline H_{E,\frac n \alpha}\big(E^{-1}(t)\big)= 2 \frac{E^{-1}(t)}{\widehat H_{E,\frac n \alpha}(E^{-1}(t))}= 2 \frac{E^{-1}(t)}{\widehat E_{\frac{n}{\alpha}}^{-1}(t)}
        \quad \text{for $t>0$.}
    \end{align}
    From Lemma \ref{Anbis}, applied with $A$ replaced with $E$, we infer that
    $$\widehat E_{\frac{n}{\alpha}}^{-1}(t) \geq c' E_{\frac{n}{\alpha}}^{-1} (t)\quad \text{for $t>0$,} $$
    for some positive constant $c'$.
    Therefore, equation  \eqref{2024-40} implies that
    \begin{align}\label{2024-41}
    B_{E,\frac n\alpha}^{-1}(t)\leq 
    \frac 2{c'}\frac{E^{-1}(t)}{E_{\frac{n}{\alpha}}^{-1}(t)}
        \quad \text{for $t>0$.}
    \end{align}
    Coupling equation \eqref{HNew} with \eqref{2024-41} yields the  inequality  \eqref{2024-39}.
\end{proof}

\begin{lem}
    \label{lem:MnD}
    Assume that $A$ is a   finite-valued Young function. 
    Let $D$ be  defined as in \eqref{D}, let $E$ be  defined as in \eqref{E}, let $\overline H_{E,n}$ be defined as in \eqref{2024-31} with $\alpha=1$, and let $M_n$ be the Sobolev conjugate of a function $M$ obeying \eqref{M}. Then, 
    \begin{align}
        \label{2024-51}
      M_n(D^{-1}(t)) \leq E(\overline H_{E,n}^{-1}(t))  \quad \text{near $0$,}
    \end{align}
and 
\begin{align}
        \label{2024-51bis}
 M_n(2D^{-1}(t)) 
       \geq   
       E(\overline H_{E,n}^{-1}(t))
        \quad \text{near $0$.}
    \end{align}
%
  %
\end{lem}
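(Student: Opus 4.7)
My plan is to reduce the statement to Lemma~\ref{Anbis} applied to the Young function $E$, after an explicit computation of the auxiliary function $\widehat H_{E,n}$ associated with $E$ through \eqref{2024-30}. Once one sees that $\widehat H_{E,n}$ has a particularly clean form, the identity linking $\widehat E_n \circ D^{-1}$ with $E \circ \overline H_{E,n}^{-1}$ drops out, and Lemma~\ref{Anbis} gives the desired two-sided sandwich.

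First I would localize. Since $M(t)=E(t)$ on some interval $[0,t_0]$, the integrals defining $H_{M,n}$ and $H_{E,n}$ coincide on $[0,t_0]$; hence $H_{M,n}^{-1}(s)=H_{E,n}^{-1}(s)$ on some interval $[0,s_0]$, and therefore $M_n(s)=E_n(s)$ for $s\in[0,s_0]$. Since $D(r)\to 0$ as $r\to 0^+$ (which follows from $D(r)=r^{n'-1}\cdot A(r^{n'})/r^{n'}$ together with the finiteness of $A$ and \eqref{2024-5}), the inequality $2D^{-1}(t)\le s_0$ holds for all sufficiently small $t$.

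Next I would compute $\widehat H_{E,n}$ explicitly. By \eqref{2024-269}, the left-continuous derivative of $E$ is $a\circ A^{-1}$, so the change of variables $\tau=A(\sigma)$ in \eqref{2024-30} (with $\alpha=1$) gives
\[
\widehat H_{E,n}(t)=\bigg(\int_0^{A^{-1}(t)}d\sigma\bigg)^{1/n'}=\bigl(A^{-1}(t)\bigr)^{1/n'},
\]
so $\widehat H_{E,n}^{-1}(r)=A(r^{n'})$. Plugging this into \eqref{2024-31} and \eqref{hatEn} yields
\[
\overline H_{E,n}\bigl(A(r^{n'})\bigr)=\frac{A(r^{n'})}{r}=D(r)\qquad\text{and}\qquad \widehat E_n(r)=E\bigl(A(r^{n'})\bigr),
\]
which together give the identity $\widehat E_n(D^{-1}(t))=E\bigl(\overline H_{E,n}^{-1}(t)\bigr)$.

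Finally I would invoke Lemma~\ref{Anbis} with $A$ replaced by $E$ (which is a Young function fulfilling \eqref{conv-alpha}, by Lemma~\ref{Enequiv}) to obtain $\widehat E_n(r/2)\le E_n(r)\le \widehat E_n(r)$ for every $r\ge 0$. Combining this two-sided bound with the identity of the previous step and the local agreement $M_n=E_n$ near $0$ yields, for all sufficiently small $t$,
\[
M_n(D^{-1}(t))=E_n(D^{-1}(t))\le \widehat E_n(D^{-1}(t))=E\bigl(\overline H_{E,n}^{-1}(t)\bigr),
\]
and
\[
E\bigl(\overline H_{E,n}^{-1}(t)\bigr)=\widehat E_n(D^{-1}(t))\le E_n\bigl(2D^{-1}(t)\bigr)=M_n\bigl(2D^{-1}(t)\bigr),
\]
which are precisely \eqref{2024-51} and \eqref{2024-51bis}. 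I do not anticipate a substantive obstacle: the only point requiring care is the explicit change of variables in the computation of $\widehat H_{E,n}$, and after that the argument is a chain of identifications closed off by Lemma~\ref{Anbis}.
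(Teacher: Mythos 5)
Your proof is correct and takes essentially the same route as the paper's: both hinge on the explicit computation $\widehat H_{E,n}(t)=(A^{-1}(t))^{1/n'}$, the resulting identity $D(r)=\overline H_{E,n}(\widehat H_{E,n}^{-1}(r))$ (in your notation $\overline H_{E,n}(A(r^{n'}))=D(r)$), and the two-sided comparison of $E_n$ with $\widehat E_n$. The only cosmetic difference is that the paper works directly with the inequalities $H_{E,n}^{-1}(\widehat H_{E,n}(t))\leq t\leq H_{E,n}^{-1}(2\widehat H_{E,n}(t))$ coming from \eqref{2024-53}, while you package this same fact by invoking Lemma~\ref{Anbis} for $E$.
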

\begin{proof}
    From equation \eqref{2024-53} and the property \eqref{aA}, both applied with $A$ replaced with $E$, 
    one infers that
\begin{align}\label{2024-54}
       H_{E,n}^{-1}\big(  \widehat H_{E,n}(t)\big)\leq t \quad \text{for $t \geq 0$,}
    \end{align}
    and 
\begin{align}\label{2024-54bis}
       H_{E,n}^{-1}\big( 2 \widehat H_{E,n}(t)\big)\geq t \quad \text{for $t \geq 0$.}
    \end{align}
    Since $M(t)=E(t)$ near $0$, 
    \begin{align}
        \label{2024-55}
        M_n(D^{-1}(t))= E_n(D^{-1}(t)) = E\big( H_{E,n}^{-1} (D^{-1}(t))\big) \quad \text{near $0$.}
    \end{align}
Equation ~\eqref{nov12} tells us  that  $\widehat H_{E, n}(t)=A^{-1}(t)^{\frac{n-1}{n}}$ for $t>0$. Therefore,
\begin{align}\label{2024-50}
D(t)=\frac{\widehat H_{E, n}^{-1}(t)}{t}
=\overline{H}_{E, n}(\widehat H^{-1}_{E, n}(t)) \qquad \text{for $t>0$.}
\end{align}
Combining equations \eqref{2024-55} and \eqref{2024-50} yields
\begin{align}
    \label{2024-56}
       M_n(D^{-1}(t))=  E\big( H_{E,n}^{-1} (\widehat H_{E, n}(\overline{H}^{-1}_{E, n}(t)))) \quad \text{near $0$.}
\end{align}
Thanks to \eqref{2024-54}  we hence deduce that
\begin{align}
    \label{2024-57}
    M_n(D^{-1}(t)) 
       \leq E(\overline H_{E,n}^{-1}(t))
       \quad \text{near $0$,}
\end{align}
namely \eqref{2024-51}. Similarly, 
from \eqref{2024-54bis} one obtains that
\begin{align}
    \label{2024-57bis}
    M_n(2D^{-1}(t)) 
       \geq  E\big( H_{E,n}^{-1} (2\widehat H_{E, n}(\overline{H}^{-1}_{E, n}(t))))
       \geq
       E(\overline H_{E,n}^{-1}(t))
       \quad \text{near $0$,}
\end{align}
namely \eqref{2024-51bis}.
\end{proof}

\section{Boundedness properties of integral operators in Orlicz spaces}\label{S:4}


In this section, we establish boundedness properties in Orlicz spaces of the integral operators involved in representation formulas for the solutions to the problems \eqref{eq:main-eq} and \eqref{eq:gen-eq}.
%
Some specific properties of Orlicz norms built upon the Young functions introduced in the previous sections are also collected in the final part of the section.

Let $\alpha \in (0,n)$.  We denote by
$T_{\alpha}$ the  extension operator  defined by 
$$T_{\alpha}h(x,x_{n+1})=\pi^{\frac{n+1}{2}}\frac{\Gamma(\frac{n-\alpha}{2})}{\Gamma(\frac{\alpha+1}{2})}\int_{\rn}\frac{h(y)}{(|x-y|^{2}+x_{n+1}^{2})^{\frac{n-\alpha}{2}}}dy \quad \text{for a.e. $(x,x_{n+1})\in \R^{n+1}$,}$$
for
 $h\in L^1\big(\rn,(1+|x|)^{-(n-\alpha)}dx\big)$. 
 \\ Furthermore, let $\mathcal H$ be the Poisson extension operator given by
$$\mathcal{H}f(x,x_{n+1})=\pi^{-\frac{n}{2}}\Gamma(n/2)\int_{\rn}\frac{x_{n+1}f(y)}{(|x-y|^{2}+x_{n+1}^{2})^{\frac{n+1}{2}}}dy\quad \text{for $(x,x_{n+1})\in \R^{n+1}$,}$$
for
 $f\in L^1\big(\rn,(1+|x|)^{-(n+1)}dx\big)$. 
\\
 Also, recall that the classical Riesz potential $I_\alpha$ is given by
$$
I_\alpha f (x)=\frac{\Gamma(\frac{n-\alpha}{2})}{2^{\alpha}\pi^{n/2}\Gamma(\alpha/2)}  \int_{\rn} \frac {f(y)}{|x-y|^{n-\alpha}}\, dy, 
\qquad  \text{for  a.e. $x\in \rn$,}
$$
for $f \in  L^1\big(\rn,(1+|x|)^{-(n-\alpha)}dx\big)$, and the Riesz transform
$\mathcal{R}_{j}$ is defined, for $j=1,\dots,n$,  via the principal value as a singular integral operator, by
\begin{align*}
\mathcal{R}_jf(x)= \dfrac{2}{\omega_n}\int_{\mathbb{R}^n}f(y)\dfrac{x_j-y_j}{|x-y|^{n+1}}dy \qquad \text{for   a.e. $x\in \rn$,}
\end{align*} 
for   $f \in  L^1\big(\rn,(1+|x|)^{-n}dx\big)$.
The following properties of the operator $T_{\alpha}$ are proved in \cite[Lemma 2.6]{Di}. In what follows, $\mathcal S(\rn)$ stands for the Schwartz space of those smooth functions in $\rn$ which, together will their derivatives of any order,  decay near infinity faster than any power.

\begin{lem}\label{lem:kernel-identities}
Let $\ell$  be an odd integer such that $0<\ell <n$. Then, there exist constants $c_1$ and $c_2$, depending on $n$ and $\ell$, such that 
\begin{equation}\label{Ker-id}
\begin{cases}(-1)^{\frac{\ell+3}{2}}\partial_{x_{n+1}}\Delta^{\frac{\ell-1}{2}}T_{\ell}h=c_1\mathcal{H}h
\\ 
 (-1)^{\frac{\ell+1}{2}}\partial_j\Delta^{\frac{\ell-1}{2}}T_{\ell}h=c_2\mathcal{R}_j\mathcal{H}h=c_2\mathcal{H}\mathcal{R}_jh\hspace{0.2cm}\mbox{for}\hspace{0.2cm} j=1,2,...,n,
\end{cases}
\end{equation}	
for $h\in \mathcal{S}(\rn)$.
\end{lem}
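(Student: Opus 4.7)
The plan is to compute the left-hand sides by direct differentiation under the integral sign of the integral defining $T_\ell h$, exploiting the fact that its kernel, viewed as a function on $\rnn$, is a multiple of the classical Riesz-type kernel $|z|^{-(n-\ell)}$ with $z=(x-y,x_{n+1})$. Since the $(n+1)$-dimensional Laplacian of $|z|^{-s}$ is a multiple of $|z|^{-s-2}$, iterating $\Delta^{(\ell-1)/2}$ will turn the kernel into $|z|^{-(n-1)}$, after which a single partial derivative produces either the Poisson kernel (when differentiating in $x_{n+1}$) or the conjugate Poisson kernel in the $j$-th direction. The membership $h\in\mathcal S(\rn)$ guarantees that all differentiations under the integral sign are permitted, and that the pointwise calculations done away from $y=x$ extend to classical identities on $\rnp$.

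The first step is to establish, by induction on $(\ell-1)/2$, the pointwise identity
\[
\Delta^{\frac{\ell-1}{2}}\bigl((|x-y|^2+x_{n+1}^2)^{-\frac{n-\ell}{2}}\bigr)
= K(n,\ell)\,(|x-y|^2+x_{n+1}^2)^{-\frac{n-1}{2}},
\]
starting from the elementary formula $\Delta_{\rnn}|z|^{-s}=s(s+1-n)\,|z|^{-s-2}$ for $z\neq 0$. Each application of $\Delta$ raises the exponent by $2$, so after $\tfrac{\ell-1}{2}$ iterations we land on the exponent $n-1$. The accumulated constant is
\[
K(n,\ell)=\prod_{j=0}^{(\ell-3)/2}(n-\ell+2j)(1-\ell+2j),
\]
whose sign is $(-1)^{(\ell-1)/2}$, since $(n-\ell+2j)>0$ thanks to $\ell<n$, whereas each factor $(1-\ell+2j)$ is non-positive for $0\leq j\leq(\ell-3)/2$ (the case $\ell=1$ being vacuous with $K(n,1)=1$).

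Next, differentiating the resulting kernel $(|x-y|^2+x_{n+1}^2)^{-(n-1)/2}$ in $x_{n+1}$ gives $-(n-1)\,x_{n+1}(|x-y|^2+x_{n+1}^2)^{-(n+1)/2}$, which is (a positive constant multiple of) the Poisson kernel. Combining this with $K(n,\ell)$ and with the normalizing factor $\pi^{(n+1)/2}\Gamma(\tfrac{n-\ell}{2})/\Gamma(\tfrac{\ell+1}{2})$ from the definition of $T_\ell$, and comparing with the constant $\pi^{-n/2}\Gamma(n/2)$ in front of $\mathcal H$, produces the first identity in \eqref{Ker-id}; the prefactor $(-1)^{(\ell+3)/2}$ on the left is precisely what cancels the sign of $-(n-1)K(n,\ell)$, leaving a positive $c_1$. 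Similarly, $\partial_j$ of the same kernel yields $-(n-1)(x_j-y_j)(|x-y|^2+x_{n+1}^2)^{-(n+1)/2}$, that is, a multiple of the conjugate Poisson kernel in the $j$-th direction, and the sign $(-1)^{(\ell+1)/2}$ absorbs the sign of $-(n-1)K(n,\ell)$ up to one further unit.

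To complete the second identity, it remains to recognize the integral
\[
\int_{\rn}\frac{x_j-y_j}{(|x-y|^2+x_{n+1}^2)^{(n+1)/2}}\,h(y)\,dy
\]
as a constant multiple of both $\mathcal R_j\mathcal H h(x,x_{n+1})$ and $\mathcal H\mathcal R_j h(x,x_{n+1})$. For $h\in\mathcal S(\rn)$ this is a classical fact: taking Fourier transforms in the $x$-variable, one has $\widehat{\mathcal H h}(\xi,x_{n+1})=e^{-2\pi|\xi|x_{n+1}}\widehat h(\xi)$ and $\widehat{\mathcal R_j h}(\xi)=-i\xi_j|\xi|^{-1}\widehat h(\xi)$; consequently the Poisson semigroup commutes with each Riesz transform, and the convolution kernel of $\mathcal H(\mathcal R_j h)$ on the slice $\rn\times\{x_{n+1}\}$ is, up to a known constant, precisely the conjugate Poisson kernel. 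Collecting the various multiplicative factors and matching them against the normalization of $\mathcal R_j$ yields the claimed $c_2$ with the desired sign. The only real obstacle is bookkeeping: the conceptual structure is transparent, but verifying that the announced sign factors $(-1)^{(\ell+3)/2}$ and $(-1)^{(\ell+1)/2}$ together with the gamma-function prefactors of $T_\ell$, $\mathcal H$ and $\mathcal R_j$ indeed collapse to positive (or at least explicit) constants $c_1,c_2$ depending only on $n$ and $\ell$ requires a meticulous, if routine, calculation.
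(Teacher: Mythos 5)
The paper does not prove this lemma; it states immediately before it that ``the following properties of the operator $T_\alpha$ are proved in \cite[Lemma 2.6]{Di}'' and simply cites that reference. There is therefore no in-paper argument to compare against, and your self-contained derivation fills a gap the paper leaves to an external source.

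Your structure is sound and matches what the cited lemma must amount to: starting from $\Delta_{\rnn}|z|^{-s}=s(s+1-n)|z|^{-s-2}$, you iterate $(\ell-1)/2$ times to turn the kernel $|z|^{-(n-\ell)}$ of $T_\ell$ into a multiple $K(n,\ell)$ of $|z|^{-(n-1)}$; then one partial derivative in $x_{n+1}$ or in $x_j$ produces, respectively, the Poisson kernel and the conjugate Poisson kernel, and the identity $\mathcal R_j\mathcal H=\mathcal H\mathcal R_j$ follows from the elementary Fourier-side description of the Poisson semigroup and the Riesz transforms. Differentiation under the integral sign is uncontroversial for $h\in\mathcal S(\rn)$, since the kernel is smooth on $\rnp$. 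Your computation of $K(n,\ell)=\prod_{j=0}^{(\ell-3)/2}(n-\ell+2j)(1-\ell+2j)$ and of its sign $(-1)^{(\ell-1)/2}$ is correct.

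One small slip: your claim that the prefactor $(-1)^{(\ell+3)/2}$ makes $c_1$ positive does not check out. The kernel of $\partial_{x_{n+1}}\Delta^{(\ell-1)/2}T_\ell$ carries the factor $-(n-1)K(n,\ell)$, whose sign is $(-1)^{(\ell+1)/2}$. Multiplying by $(-1)^{(\ell+3)/2}$ gives $(-1)^{(\ell+3)/2+(\ell+1)/2}=(-1)^{\ell+2}=-1$ for every odd $\ell$, so with the normalizations written in the paper the constant $c_1$ is in fact negative (and the same care is needed for $c_2$). This does not invalidate anything -- the lemma asserts only existence of constants depending on $n$ and $\ell$, not their sign -- but it is the kind of bookkeeping error worth flagging precisely because you drew attention to it; if you want the signs to come out a specific way, one must also track the normalizing constants in the definitions of $T_\ell$ and $\mathcal H$, and these (as written in the paper) do not appear to be tuned to produce $c_1=1$.
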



Thanks to this lemma, boundedness properties of any $\ell$-th order derivative of $T_{\ell}$ in Orlicz spaces  can be deduced from parallel properties of the Riesz transform and the Poisson extension operator. This is accomplished in the next lemma.

\begin{lem}\label{lem:T-bounds} Assume that $\ell \in (0,n)$ is an odd integer. Let $A$ be a   Young function fulfilling the condition ~\eqref{conv-alpha}, with $\alpha=\ell$, and
such  that $A\in \Delta_2\cap \nabla_2$.
Then, 
$$T_{\ell}: L^{A}(\rn) \to V^{\ell,(A_\diamond,A_{{n}/{\ell}})}(\rnp,\rn).$$ Namely, there exists a constant $c$ such that
\begin{equation}\label{bound-on-T}
\|\nabla^{\ell}T_{\ell}h\|_{L^{A_\diamond}(\rnp)}+\|T_{\ell}h\|_{L^{A_{{n}/{\ell}}}(\rn)}\leq c\|h\|_{L^{A}(\rn)}    
\end{equation}
for 
 $h\in L^{A}(\rn)$. 
\end{lem}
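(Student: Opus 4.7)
The plan is to split the estimate into its two constituents and handle them via the representation identities of Lemma~\ref{lem:kernel-identities} together with boundedness properties of the Poisson extension, the Riesz transform, and the Riesz potential on Orlicz spaces. Since $\ell$ is odd, by definition \eqref{higher-der} we have $\nabla^\ell T_\ell h = \nabla \Delta^{(\ell-1)/2} T_\ell h$, so that the pointwise norm $|\nabla^\ell T_\ell h|$ is controlled, up to constants, by $|\partial_{x_{n+1}}\Delta^{(\ell-1)/2}T_\ell h|$ and the $|\partial_j \Delta^{(\ell-1)/2}T_\ell h|$ for $j=1,\dots,n$. These derivatives are computed in closed form in Lemma~\ref{lem:kernel-identities}: the normal derivative equals (a constant times) $\mathcal H h$, while the tangential derivatives are $\mathcal H \mathcal R_j h$.

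Hence the first part of \eqref{bound-on-T} reduces to
\begin{equation*}
\|\mathcal H h\|_{L^{A_\diamond}(\rnp)} + \sum_{j=1}^n \|\mathcal H \mathcal R_j h\|_{L^{A_\diamond}(\rnp)} \leq c\|h\|_{L^A(\rn)}.
\end{equation*}
For this I would invoke two ingredients which are in the spirit of the results that Section~\ref{S:4} is designed to supply: (a) the Orlicz boundedness of the Poisson extension $\mathcal H : L^A(\rn) \to L^{A_\diamond}(\rnp)$, which is the raison d'\^etre of the Young function $A_\diamond$ defined in \eqref{G}; and (b) the Orlicz boundedness of the Riesz transforms $\mathcal R_j : L^A(\rn) \to L^A(\rn)$, which holds precisely because $A \in \Delta_2 \cap \nabla_2$ (this is the classical extrapolation criterion for Calder\'on--Zygmund operators on Orlicz spaces). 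Chaining (a) and (b), first applying $\mathcal R_j$ on $\rn$ and then $\mathcal H$, yields the bound for the tangential derivatives; applying only (a) handles the normal derivative. Density of $\mathcal S(\rn)$ in $L^A(\rn)$ (which is justified since $A \in \Delta_2$) ensures that the representation \eqref{Ker-id} can be extended from Schwartz data to general $h \in L^A(\rn)$.

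For the trace bound $\|T_\ell h\|_{L^{A_{n/\ell}}(\rn)} \leq c\|h\|_{L^A(\rn)}$, I would observe that from the integral definition of $T_\ell$, the trace on $\partial \rnp = \rn$ satisfies $T_\ell h(x,0) = c_\ell I_\ell h(x)$, so that $T_\ell$ reduces on the boundary to (a constant multiple of) the Riesz potential of order $\ell$. The required estimate is then exactly the Orlicz--Sobolev continuity of $I_\ell : L^A(\rn) \to L^{A_{n/\ell}}(\rn)$, a sharp result due to Cianchi whose underlying assumption is precisely the integrability condition \eqref{conv-alpha} with $\alpha=\ell$; this hypothesis is given.

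The main obstacle is ensuring that the two auxiliary results on $\mathcal H$ and $\mathcal R_j$ in Orlicz spaces are applicable under the exact hypothesis $A \in \Delta_2 \cap \nabla_2$ (one needs both the Riesz transform bound and the stronger pointwise control gained on $\rnp$), but since these boundedness properties are announced as the content of Section~\ref{S:4} itself, the present lemma is essentially a clean assembly of them via Lemma~\ref{lem:kernel-identities} rather than a new analytical obstruction. Everything else amounts to bookkeeping the constants $c_1$, $c_2$, $c_\ell$ and summing the two pieces of the norm on the left-hand side of \eqref{bound-on-T}.
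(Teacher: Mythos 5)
Your proposal follows essentially the same route as the paper's: the trace bound reduces to $I_\ell : L^A(\rn) \to L^{A_{n/\ell}}(\rn)$ (Lemma~\ref{Sobolev-Riesz}), and the gradient bound is assembled from the kernel identities of Lemma~\ref{lem:kernel-identities} together with $\mathcal R_j : L^A(\rn)\to L^A(\rn)$ and $\mathcal H : L^A(\rn)\to L^{A_\diamond}(\rnp)$, then extended beyond Schwartz data. Two points that you treat as given are actually carried out inside the paper's proof and are worth flagging: the bound $\mathcal H : L^A(\rn)\to L^{A_\diamond}(\rnp)$ is not a pre-existing lemma but is derived there from the weak-$(1,\tfrac{n+1}{n})$ and $L^\infty$ endpoint bounds on the Poisson kernel via the Orlicz interpolation theorem of \cite{A2}, and the hypothesis $A\in\nabla_2$ enters precisely in verifying the integral condition that interpolation requires; and the passage from $h\in\mathcal S(\rn)$ to general $h\in L^A(\rn)$ needs more than bare density --- the paper uses pointwise kernel bounds, dominated convergence to pass to the limit in the integral representation, and the Fatou property of the Luxemburg norm.
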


The following Sobolev type boundedness property of the Riesz potential comes into play in the proof of Lemma \ref{lem:T-bounds}.

\begin{lem}
    \label{Sobolev-Riesz}
    Let $\alpha \in (0,n)$ and let $A$ be a   finite-valued Young function fulfilling the condition \eqref{conv-alpha}. Assume that either
 $A\in \nabla _2$ globally, or $A\in \nabla _2$ near $0$ and the condition \eqref{convinf} holds. Then,
\begin{equation}\label{nov301}
I_{\alpha} : L^{A}(\rn) \to L^{A_{\frac{n}{\alpha}}}(\rn).
\end{equation}
\end{lem}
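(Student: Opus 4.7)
The plan is to reduce the claim to a one-dimensional rearrangement inequality via O'Neil's bound for the Riesz potential, and then exploit the structure of the Sobolev conjugate $A_{\frac{n}{\alpha}}$ together with the $\nabla_2$ hypothesis. Recall that there is a constant $c=c(n,\alpha)$ such that, for any measurable $f$ with appropriate decay,
\begin{equation*}
(I_\alpha f)^{**}(t) \leq c\,t^{\alpha/n} f^{**}(t) + c\int_t^\infty s^{\alpha/n-1} f^*(s)\, ds \qquad \text{for $t>0$,}
\end{equation*}
so it suffices to control the Luxemburg norm of the right-hand side in $L^{A_{\frac{n}{\alpha}}}(0,\infty)$ by the modular of $f^*$ in $L^A(0,\infty)$. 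The first summand is a weighted maximal-type term, whose Orlicz boundedness in the $\nabla_2$ regime is classical: the condition $A\in \nabla_2$, in the form \eqref{equivnabla2}, is exactly what guarantees that the Hardy-Littlewood maximal operator and the averaging operator $f^{**}$ act boundedly on $L^A$.

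The core of the argument is the tail integral $T f(t) := \int_t^\infty s^{\alpha/n-1} f^*(s)\, ds$. I would perform the change of variable $t = H_{A,n/\alpha}(r)$, using the formula \eqref{H-alpha}, so that the modular inequality
\begin{equation*}
\int_0^\infty A_{\frac{n}{\alpha}}\!\left(\tfrac{1}{\lambda} T f(t)\right) dt \leq \int_0^\infty A\!\left(\tfrac{c'}{\lambda} f^*(r)\right) dr
\end{equation*}
becomes, after the substitution, a pointwise comparison between $A_{\frac{n}{\alpha}}(H_{A,n/\alpha}(r))$ and $A(r)$, which is an equality by \eqref{A-alpha}. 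The remaining task reduces to a one-dimensional Hardy inequality, namely that the operator $f^*\mapsto Tf$ is bounded on the weighted Lebesgue scale associated with the change of variable. Here the monotonicity of $A_{\frac{n}{\alpha}}(t)/t^{n/(n-\alpha)+\delta}$ supplied by Lemma \ref{quotient}(i) is decisive: it provides the reverse doubling on $A_{\frac{n}{\alpha}}$ that is needed to absorb the constants in the Hardy step, and it is precisely where the global $\nabla_2$ hypothesis enters.

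The main obstacle is the alternative case in which $A\in\nabla_2$ only near zero and \eqref{convinf} is in force. Then $A_{\frac{n}{\alpha}}(t)=\infty$ for large $t$ by \eqref{Aninf}, and the monotonicity is only quasi-monotonicity (Lemma \ref{quotient}(ii)). I would split $\rn$ into the region where $I_\alpha f$ is small, handled by the argument above using the near-zero $\nabla_2$ property, and the region where $I_\alpha f$ is large, which under \eqref{convinf} must in fact be void for $\|f\|_{L^A}$ small, since \eqref{convinf} forces a sub-exponential $L^\infty$-type endpoint embedding parallel to \eqref{embinf}. Scaling $f$ to norm one and combining the two contributions yields the desired Luxemburg bound, completing the proof.
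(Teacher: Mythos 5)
Your outline opens up the black box that the paper invokes: rather than verifying the hypotheses of \cite[Theorem 7.2.1]{M} (which rests on Cianchi's rearrangement characterization in \cite{A1}), you propose to rerun the underlying O'Neil--Hardy machinery directly. The analytic core is the same in both routes --- the $\nabla_2$ hypothesis is consumed through the quotient monotonicity of Lemma~\ref{quotient}, which supplies the reverse-power lower bound needed to close the Hardy step; in the paper this is exactly how the condition \eqref{eq:condition-Riesz} is verified. So this is a genuinely different, more self-contained presentation of essentially the same estimate.

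That said, the sketch has two real gaps. First, the local term $t^{\alpha/n}f^{**}(t)$ is \emph{not} handled by the $L^A$-boundedness of the maximal operator or of $f\mapsto f^{**}$ under $\nabla_2$: those are statements about maps from $L^A$ to $L^A$, whereas here you must send $f^*\in L^A(0,\infty)$ to the weighted average $t^{\alpha/n-1}\int_0^t f^*(s)\,ds$ viewed as an element of $L^{A_{n/\alpha}}(0,\infty)$. This piece needs its own cross-space Hardy inequality, of the same type as the tail, and its presence is exactly why the function $P(t)=\sup_{0<s\le t}A^{-1}(s)s^{-\alpha/n}$ appears in the cited criterion. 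Second, the treatment of the alternative hypothesis (near-zero $\nabla_2$ plus \eqref{convinf}) is too loose to be checked: the assertion that the large region is ``void'' amounts to the endpoint boundedness $I_\alpha\colon L^A(\rn)\to L^\infty(\rn)$ under \eqref{convinf}, which is true (H\"older applied to the tail integral) but is a separate fact requiring its own line; and even granting it, the bookkeeping that combines the $L^\infty$ bound with the modular bound on the finite part of $A_{n/\alpha}$, while absorbing the quasi-monotonicity constant from Lemma~\ref{quotient}(ii) into the Luxemburg norm, is not supplied. In the paper all of this is packaged inside the single condition \eqref{nov31}, which is why that route is shorter.
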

	\begin{proof}
	    A proof of equation \eqref{nov301} makes use of  \cite[Theorem 7.2.1, item (ii)]{M}, whose proof is, in turn, related to a general characterization of boundeness properties of Riesz potentials in Orlicz spaces from \cite{A1}.  That theorem enables one to deduce equation \eqref{nov301} after verifying that:
\begin{equation}\label{nov30}
\inf _{0<t<1}A(t)t^{-\frac{n}{\alpha}}>0
\end{equation}
and 
\begin{equation}\label{nov31}
\int_{0}^{t}\dfrac{G(s)}{s^{1+\frac{n}{n-\alpha}}}ds\leq \dfrac{P^{-1}(ct)}{t^{\frac{n}{n-\alpha}}}\hspace{0.2cm}\text{for}\hspace{0.2cm}t\geq 0,
\end{equation} 
for some constant $c>0$, where $P:(0, \infty ) \to [0, \infty)$ is  the function given by 
$$P(t)= \sup_{0<s\leq t}A^{-1}(s)s^{-\frac{\alpha}{n}} \quad \text{for $t>0$,}$$
and $G:(0, \infty ) \to [0, \infty)$ is the function defined as 
$$G(t) = \bigg(\frac {tA(H_{A,\frac{n}{\alpha}}^{-1}(t))}{H_{A,\frac{n}{\alpha}}^{-1}(t)}\bigg)^{ \frac{n}{n-\alpha}}\quad \text{for $t>0$.}$$
Equation \eqref{nov30} is a consequence of the fact that 
$$\lim _{t\to 0} A(t)t^{-\frac{n}{\alpha}}=\infty,$$
 which  follows from the assumption \eqref{conv-alpha} and the property \eqref{2024-5}. 
\\ As far as the condition \eqref{nov31} is concerned, we observe that, owing to \cite[Lemma 1]{Cia_CPDE}, one has that $G \simeq A_{\frac{n}{\alpha}}$. Hence, the inequality \eqref{nov31} is equivalent to 
\begin{equation}\label{eq:condition-Riesz}\int_{0}^{t}\dfrac{A_{\frac{n}{\alpha}}(s)}{s^{1+\frac{n}{n-\alpha}}}ds\leq \dfrac{P^{-1}(ct)}{t^{\frac{n}{n-\alpha}}}\hspace{0.2cm}\text{for}\hspace{0.2cm}t\geq 0,
\end{equation} 
for some constant $c>0$. In order to prove   \eqref{eq:condition-Riesz},  note that, 
by Lemma \ref{quotient}, there exists $\delta >0$ such that
the function $\dfrac{A_{\frac{n}{\alpha}}(t)}{t^{\frac{n}{n-\alpha}+\delta}}$ is quasi non-decreasing. Thus, there exists a constant $c'$ such that
\begin{align*}
\int_{0}^{t}\dfrac{A_{\frac{n}{\alpha}}(s)}{s^{\frac{n}{n-\alpha}+1}}ds=\int_{0}^{t}\dfrac{A_{\frac{n}{\alpha}}(s)}{s^{\frac{n}{n-\alpha}+\delta}}s^{\delta-1}ds
\leq c'\dfrac{A_{\frac{n}{\alpha}}(t)}{t^{\frac{n}{n-\alpha}+\delta}} \int_{0}^{t}s^{\delta-1}ds
= \frac{c'}{\delta}\dfrac{A_{\frac{n}{\alpha}}(t)}{t^{\frac{n}{n-\alpha}}} \qquad\text{for $t \geq 0$.}
\end{align*}  
Thanks to the latter chain, equation \eqref{eq:condition-Riesz} will follow if we show that there exists a positive constant $c$ such that $A_{\frac{n}{\alpha}}(t)\leq P^{-1}(ct)$, namely, $A(H_{A,\frac{n}{\alpha}}^{-1}(t))\leq P^{-1}(ct)$ or,  equivalently, 
\begin{align}\label{2024-15}
    c\,H_{A,\frac{n}{\alpha}}(A^{-1}(t))\geq P(t) \qquad \text{for $t\geq 0$.}
\end{align}
Since $H_{A,\frac{n}{\alpha}}(A^{-1}(t))$ is an increasing function, equation \eqref{2024-15} will 
be established 
 if we prove that
\begin{align}\label{2024-16}
     c H_{A,\frac{n}{\alpha}}(A^{-1}(t))\geq A^{-1}(t)t^{-\frac{\alpha}{n}} \qquad \text{for $t\geq 0$.}
\end{align}
This inequality holds as a consequence of the chain:
\begin{align*}
H_{A,\frac{n}{\alpha}}(t)&=\bigg(\int_{0}^{t}\bigg(\dfrac{s}{A(s)}\bigg)^{\frac{\alpha}{n-\alpha}}ds\bigg)^{\frac{n-\alpha}{n}}
\geq \frac{1}{A(t)^{\frac{\alpha}{n}}}\bigg(\int_{0}^{t}s^{\frac{\alpha}{n-\alpha}}ds\bigg)^{\frac{n-\alpha}{n}}=\bigg(\dfrac{n-\alpha}{n}\bigg)^{\frac{n-\alpha}{n}}\dfrac{t}{A(t)^{\frac{\alpha}{n}}} \qquad \text{for $t\geq 0$.}   
\end{align*} 
\end{proof}
 The  following lemma ensures that the operator 
$T_\alpha$ is well-defined for functions in the Orlicz space $L^A(\rn)$, provided that $A$ fulfils the condition \eqref{conv-alpha}.

\begin{lem}
    \label{inclusion}
    Let $\alpha \in (0,n)$ and let $A$ be Young function fulfilling the condition \eqref{conv-alpha}. Then,
    \begin{align}
        \label{inclusion1}
        L^A(\rn) \to  L^1\big(\rn,(1+|x|)^{-(n-\alpha)}dx\big).
    \end{align}
\end{lem}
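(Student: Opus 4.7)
My plan is to deduce the embedding from the generalized H\"older inequality~\eqref{holder} by showing that the weight $w(x) := (1+|x|)^{-(n-\alpha)}$ lies in the Orlicz space $L^{\widetilde A}(\rn)$. Once $w \in L^{\widetilde A}(\rn)$, \eqref{holder} immediately gives
$$
\int_{\rn} \frac{|f(x)|}{(1+|x|)^{n-\alpha}}\,dx \;\leq\; 2\,\|f\|_{L^A(\rn)}\,\|w\|_{L^{\widetilde A}(\rn)}
$$
for every $f \in L^A(\rn)$, which is exactly the desired continuous embedding.

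To verify $w \in L^{\widetilde A}(\rn)$, I need to produce $\lambda>0$ with $\int_{\rn}\widetilde A(\lambda w(x))\,dx < \infty$. Passing to spherical coordinates and performing the substitution $u = \lambda(1+r)^{-(n-\alpha)}$, which maps $r\in[0,\infty)$ onto $u\in(0,\lambda]$, the bounded contribution near $r=0$ is harmless and the finiteness reduces to
$$
\int_0^{\lambda}\widetilde A(u)\,u^{-1-\frac{n}{n-\alpha}}\,du<\infty.
$$
To handle this integral I would use $\widetilde A(u)\leq u\,a^{-1}(u)$, which follows from~\eqref{youngconj} and the monotonicity of $a^{-1}$. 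Then, setting $u=a(s)$ and integrating by parts, the problem reduces to showing both that $\int_0 a(s)^{-\alpha/(n-\alpha)}\,ds$ converges and that the boundary term, of the form $s\,a(s)^{-\alpha/(n-\alpha)}$, vanishes as $s\to 0^+$. The inequality $a(s)\geq A(s)/s$ from~\eqref{aA} controls $a(s)^{-\alpha/(n-\alpha)}$ by $(s/A(s))^{\alpha/(n-\alpha)}$, whose integrability near $0$ is exactly the hypothesis~\eqref{conv-alpha}.

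The subtle point will be the vanishing of the boundary term. My plan is to exploit~\eqref{2024-5}: it implies that the function $h(s):=(s/A(s))^{\alpha/(n-\alpha)}$ is non-increasing on $(0,\infty)$. For any positive, non-increasing, integrable function $h$ on $(0,1]$ one has $s\,h(s)\leq \int_0^s h(\tau)\,d\tau \to 0$ as $s\to 0^+$, so $h$ integrable near $0$ forces $s\,h(s)\to 0$. Applying this to $h(s)=(s/A(s))^{\alpha/(n-\alpha)}$ and combining with $a(s)\geq A(s)/s$ yields the required vanishing of the boundary term and closes the estimate. I expect this combination of monotonicity with the integrability furnished by~\eqref{conv-alpha} to be the main technical hurdle; the substitutions are standard, but care is needed since $a$ is only left-continuous and $a^{-1}$ is its generalized inverse.
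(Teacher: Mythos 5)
Your proposal follows the same strategy as the paper: reduce via the H\"older inequality \eqref{holder} to checking that $w(x)=(1+|x|)^{-(n-\alpha)}$ belongs to $L^{\widetilde A}(\rn)$, pass to radial coordinates, and recast this as the convergence of $\int_0\widetilde A(t)\,t^{-1-\frac{n}{n-\alpha}}\,dt$. The only place you diverge is the last step: the paper simply cites [cianchi-ibero, Lemma~2.3] for the equivalence between this condition and \eqref{conv-alpha}, whereas you prove the needed implication directly. Your chain -- the bound $\widetilde A(u)\le u\,a^{-1}(u)$ from \eqref{aA}/\eqref{youngconj}, transfer of the integral to $\int_0 a(s)^{-\alpha/(n-\alpha)}\,ds$, the estimate $a(s)^{-\alpha/(n-\alpha)}\le (s/A(s))^{\alpha/(n-\alpha)}$, and the vanishing of the boundary term via the non-increasing integrable majorant $(s/A(s))^{\alpha/(n-\alpha)}$ -- is correct and uses exactly the right ingredients. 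One small technical suggestion: the transfer step can be done by a Fubini interchange (write $a^{-1}(u)=\int_0^{a^{-1}(u)}ds$ and swap the order of integration), which produces $\frac{n-\alpha}{\alpha}\int_0^{a^{-1}(\lambda)}a(s)^{-\alpha/(n-\alpha)}\,ds$ as an upper bound with no boundary term at all and sidesteps the generalized-inverse and Stieltjes-integration bookkeeping that your integration-by-parts route requires; this is cleaner but the overall argument is sound either way.
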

\begin{proof}
    By the H\"older inequality \eqref{holder},
    \begin{align}
        \label{2024-110}
\int_{\rn}\frac{|h(x)|}{(1+|x|)^{(n-\alpha)}}\, dx \leq 2 \|h\|_{L^A(\rn)} \big\|(1+|x|)^{-(n-\alpha)}\|_{L^{\widetilde A}(\rn)}
    \end{align}
    for $h \in L^A(\rn)$. Via the very definition of Luxemburg norm, one can verify that
    $$\big\|(1+|x|)^{-(n-\alpha)}\|_{L^{\widetilde A}(\rn)} <\infty\quad \text{if and only if} \quad\int_0\frac{\widetilde A(t)}{t^{1+\frac {n}{n-\alpha}}}\, dt < \infty.$$
    Equation \eqref{inclusion1} thus follows, since the convergence of the last integral is equivalent to the condition \eqref{conv-alpha} -- see \cite[Lemma 2.3]{cianchi-ibero}.
\end{proof}

\begin{proof}[Proof of Lemma~\ref{lem:T-bounds}]
  To begin with, observe that, given a function $h : \rn \to \R$,  the restriction 
  of  $T_{\ell}h$ to $\rn$ agrees with  a dimensional  multiple of the Riesz potential $I_{\ell}h$ on $\rn$. Thus,
owing to equation \eqref{nov301}, there exists a constant $c$
such that
\begin{align}\label{2024-21}
\|T_{\ell}h\|_{L^{A_{\frac{n}{\ell}}}(\rn)}\leq c\|h\|_{L^{A}(\rn)}    
\end{align}
for $h \in L^{A}(\rn)$.
\\ A parallel bound for the first norm on the left-hand side of the inequality \eqref{bound-on-T} relies upon 
 an interpolation argument.  Recall that the Poisson extension operator has the following boundedness properties:
\begin{align}\label{2024-22}
\mathcal{H}:L^{1}(\rn)\rightarrow L^{\frac{n+1}{n},\infty}(\rnp)\quad \text{and}\quad \mathcal{H}:L^{\infty}(\rn)\rightarrow L^{\infty}(\rnp),
\end{align} 
see e.g. \cite{HWY}. 
From the Theorem of \cite{A2}, one can hence deduce that
\begin{align}
    \label{2024-17}
    \mathcal{H}:L^{A}(\rn)\rightarrow L^{A_\diamond}(\rnp),
\end{align}
provided that there exists a constant $c$ such that 
$$\int_{0}^{t}\frac{A_\diamond(s)}{s^{\frac{n+1}{n}+1}}ds \leq c\bigg(\frac{A(t)}{t}\bigg)^{\frac{n+1}{n}}\hspace{0.1cm}\text{for}\hspace{0.1cm}t>0.$$
 The latter inequality reads
\begin{align}\label{2024-18}
\int_{0}^{t}\frac{A(s)^{\frac{n+1}{n}}}{s^{\frac{n+1}{n}+1}}ds\leq c\bigg(\frac{A(t)}{t}\bigg)^{\frac{n+1}{n}}\hspace{0.1cm}\text{for}\hspace{0.1cm}t>0.
\end{align}
Since $A\in \n$, the property 
\eqref{equivnabla2}
ensures that the function $\dfrac{A(t)}{t^{1+\varepsilon}}$ is  non-decreasing  for some $\varepsilon>0$. Therefore, equation \eqref{2024-18} follows from the following chain:
$$\int_{0}^{t}\frac{A(s)^{\frac{n+1}{n}}}{s^{\frac{n+1}{n}+1}}ds=\int_{0}^{t}\frac{A(s)^{\frac{n+1}{n}}}{s^{\frac{n+1}{n}+\varepsilon}}s^{\varepsilon-1}ds\leq \frac{A(t)^{\frac{n+1}{n}}}{t^{\frac{n+1}{n}+\varepsilon}}\int_{0}^{t}s^{\varepsilon-1}ds= \frac 1\varepsilon \bigg(\frac{A(t)}{t}\bigg)^{\frac{n+1}{n}}  \text{for $t>0$.}$$
The assumption   $A\in \Delta_2\cap \nabla_2$ classically ensures that
 \begin{align}\label{2024-19}
     \mathcal{R}_j:L^{A}(\rn)\rightarrow L^{A}(\rn),
 \end{align} 
 see e.g. \cite{KoKr}.
 From Lemma \ref{lem:kernel-identities} and equations \eqref{2024-17} and \eqref{2024-19} one deduces that
\begin{align}\label{2024-20}
\|\nabla^{\ell} T_{\ell}h\|_{L^{A_\diamond}(\rnp)}&\leq  c\bigg(\sum_{j=1}^n \|\mathcal{H} \mathcal{R}_jh\|_{L^{A_\diamond}(\rnp)}+\| \mathcal{H}h\|_{L^{A_\diamond}(\rnp)}\bigg)\leq c'\|h\|_{L^{A}(\rn)}
\end{align} 
for some constants   $c$ and $c'$, and for every $h \in \mathcal S(\rn)\cap L^{A}(\rn)$.
\\   It remains to show that the same inequality continues to hold  for any   $h\in L^A(\rn)$. To this end,  observe that,
given $L>0$, the kernel $K_\ell$ associated with the operator $T_\ell$ satisfies the bounds
\begin{align}
    \label{2024-120}|\nabla ^m_x K_{\ell}(x-y,x_{n+1})| \leq \frac {c} {(|x-y|^{2}+L^{2})^{\frac{n-\ell}{2}}} \quad \text{for $x, y \in \rn$ and $x_{n+1}\geq L$,}
\end{align}
and 
\begin{align}
    \label{2024-120b}|\partial_{x_{n+1}}^m K_{\ell}(x-y,x_{n+1})| \leq \frac {c} {(|x-y|^{2}+L^{2})^{\frac{n-\ell}{2}}} \quad \text{for $x, y \in \rn$ and $x_{n+1}\geq L$,}
\end{align}
for $m \in \N \cup \{0\}$, and some constant $c=c(n,\ell, m, L)$. 
Hence, by Lemma \ref{inclusion}, if $h \in L^A(\rn)$, then, fixing $x_{n+1}>0$,
\begin{align}
    \label{2024-121} 
    h(y)\nabla ^m K_{\ell}(x-y,x_{n+1})\in L^1(B_R(0)\times \rn)
\end{align}
for $m \in \N \cup \{0\}$ and $R>0$, where $B_R(0)$ denotes the ball in $\rn$ centered at $0$, with radius $R$, and $(x,y) \in B_R(0)\times \rn$.  From this piece of information and Fubini's theorem, one deduces that, for every $x_{n+1}>0$, the function
$T_\ell h(\cdot, x_{n+1})$ admits weak derivatives of any order $m$, and 
\begin{align}\label{2024-135}
\nabla ^m T_\ell h (x, x_{n+1}) = \int_{\rn} \nabla ^m K_{\ell}(x-y,x_{n+1})h(y)\,dy \quad \text{for a.e. $x \in \rn$.}
\end{align}
Now, let $\{h_k\}$ be a sequence in $\mathcal S(\rn)\cap  L^A(\rn)$ such that $h_k\rightarrow h$ in $L^A(\rn)$. Such a sequence exists since $A\in  \Delta_2$. We already know that 
\begin{equation}\label{cont-seq}
   \|\nabla^{\ell} T_{\ell} h_k\|_{L^{A_{\diamond}}(\rnp)}\leq c\|h_k\|_{L^A(\rn)}
\end{equation}
for some constant $c$.
A generalization of a classical result for Lebesgue spaces ensures that there exists a subsequence, still denoted by $\{h_k\}$, and a nonnegative function $\overline h \in L^A(\rn)$, such that
$h_k \to h$ a.e.\ and $|h_k|\leq \overline h$ a.e.\ in $\rn$. As a consequence, thanks to \eqref{2024-120} and the dominated convergence theorem, 
$\nabla ^\ell T_\ell h_k \to \nabla ^\ell T_\ell h$ a.e. in $\rnp$. Since Orlicz norms enjoy the Fatou property,  
$$\|\nabla ^\ell T_\ell h\|_{L^{A_{\diamond}}(\rnp)}\leq \liminf_{n\rightarrow \infty}\|\nabla^{\ell} T_{\ell} h_k\|_{L^{A_{\diamond}}(\rnp)}. $$ 
On the other hand, 
$$\lim_{k\to \infty} \|h_k\|_{L^{A}(\rn)}= \|h\|_{L^{A}(\rn)}.$$
From \eqref{cont-seq}
we hence deduce that
\begin{equation}\label{cont-seq'}
   \|\nabla^{\ell} T_{\ell} h\|_{L^{A_{\diamond}}(\rnp)}\leq C\|h\|_{L^A(\rn)}. 
\end{equation}
The inequality \eqref{bound-on-T}
follows from \eqref{2024-21} and \eqref{cont-seq'}.
\end{proof}	

The regularity properties of the Neumann potential operator $T_1$ stated in the next lemma will be of use in  our proof of Theorem \ref{thm:reg2}.

\begin{lem}\label{lem:reg}
Let $M$ be a Young function as in \eqref{M}, and let $M_n$ be its Sobolev conjugate.
If $f\in W^{1,M}(\rn)$, then  $T_1f\in C^1(\overline{\rnp})\cap W^{1,M_n}(\rn)$. Moreover,   there exists a constant $c$ such that
\begin{equation}\label{est-reg}
\|T_1f\|_{C^1(\overline{\rnp})}+\|T_1f\|_{W^{1,M_n}(\rn)}\leq c\|f\|_{W^{1,M}(\rn)}
\end{equation}
for $f\in W^{1,M}(\rn)$.
\end{lem}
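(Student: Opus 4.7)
The plan is to reduce every bound to the Sobolev-type boundedness $I_1:L^M(\rn)\to L^{M_n}(\rn)$ furnished by Lemma~\ref{Sobolev-Riesz}. That lemma applies with $\alpha=1$ and $A=M$, because $M$ coincides with $E$ near $0$: this yields the integrability \eqref{conv-alpha} via Lemma~\ref{Enequiv} and the $\nabla_2$ property near $0$ via Lemma~\ref{prop1}, while the second condition in \eqref{M} is precisely \eqref{convinf} with $\alpha=1$. Moreover, \eqref{aug1} gives the further embedding $L^{M_n}(\rn)\hookrightarrow L^\infty(\rn)$. The workhorse identities are those of Lemma~\ref{lem:kernel-identities} with $\ell=1$, namely $-\partial_{x_{n+1}}T_1 f=c_1\mathcal{H}f$ and $-\partial_j T_1 f=c_2\mathcal{H}\mathcal{R}_j f$, combined with the Fourier multiplier identity $\mathcal{R}_j f=c\,I_1(\partial_j f)$, which converts Riesz transforms of $f$ into Riesz potentials of $\partial_j f$.

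For the $W^{1,M_n}(\rn)$ estimate, the trace $T_1 f(\cdot,0)$ is a dimensional multiple of $I_1 f$, and each tangential derivative $\partial_j T_1 f(\cdot,0)$ equals $-c_2\mathcal{R}_j f=c'\,I_1(\partial_j f)$. Two applications of Lemma~\ref{Sobolev-Riesz}, one to $f$ and one to each $\partial_j f$, therefore produce $\|T_1 f\|_{W^{1,M_n}(\rn)}\leq c\|f\|_{W^{1,M}(\rn)}$. The harmonicity of $T_1 f$ on $\rnp$ already gives $T_1 f\in C^\infty(\rnp)$. The $L^\infty(\rnp)$ control of $T_1 f$ follows from the pointwise kernel estimate $(|x-y|^2+x_{n+1}^2)^{-(n-1)/2}\leq|x-y|^{-(n-1)}$, which yields $|T_1 f(x,x_{n+1})|\leq c\,I_1|f|(x)\in L^\infty(\rn)$; the $L^\infty(\rnp)$ control of $\nabla T_1 f$ follows from the Poisson representations above combined with the maximum-principle bound $\|\mathcal{H}g\|_{L^\infty(\rnp)}\leq\|g\|_{L^\infty(\rn)}$ and the facts that $f\in L^\infty(\rn)$ (via \eqref{embinf}) and $\mathcal{R}_j f=c\,I_1(\partial_j f)\in L^{M_n}(\rn)\subset L^\infty(\rn)$.

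The main obstacle is the continuity of $T_1 f$ and of $\nabla T_1 f$ up to $\partial\rnp$. For the derivatives I would exploit that $\mathcal{H}g\in C(\overline{\rnp})$ whenever $g\in C(\rn)\cap L^\infty(\rn)$, so it suffices to verify that $f$ and each $\mathcal{R}_j f$ belong to this class. The continuity of $f$ is \eqref{cont}, and the continuity of $\mathcal{R}_j f=c\,I_1(\partial_j f)$ will be proved by splitting the defining convolution of $I_1(\partial_j f)$ around $|y-x_0|\leq 2\delta$: the exterior part converges to $c\,I_1(\partial_j f)(x_0)$ by dominated convergence, while the interior part is controlled via the Orlicz--H\"older inequality \eqref{holder} by $c\|\partial_j f\|_{L^M(\rn)}\cdot\||z|^{-(n-1)}\mathbf{1}_{|z|<3\delta}\|_{L^{\widetilde M}(\rn)}$, and the Luxemburg norm on the right tends to $0$ as $\delta\to 0^+$ thanks to the first integrability condition in \eqref{M}, which places $|z|^{-(n-1)}$ locally in $L^{\widetilde M}$. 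The continuity of $T_1 f$ itself at $\partial\rnp$ follows from an identical splitting of its defining integral, completing the $C^1(\overline{\rnp})$ regularity with the claimed norm estimate.
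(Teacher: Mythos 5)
Your proof is correct and proceeds along the same main lines as the paper's: reduce everything to the Riesz-potential boundedness $I_1:L^M\to L^{M_n}$ (Lemma~\ref{Sobolev-Riesz}) plus the embedding $L^{M_n}\hookrightarrow L^\infty$, and use the kernel identities of Lemma~\ref{lem:kernel-identities} with $\ell=1$ together with the maximum principle for $\mathcal H$. The one real point of divergence is that you phrase the tangential-derivative bound via the Fourier/integration-by-parts identity $\mathcal{R}_jf=c\,I_1(\partial_j f)$ and the representation $\partial_j T_1f=c\,\mathcal H\mathcal R_j f$, whereas the paper transfers the derivative directly onto $f$ inside the convolution, writing $\nabla_x T_1f=K_1\ast\nabla f$ (equation \eqref{2024-300}) and then bounding this by $I_1(|\nabla f|)$; after restricting to $\partial\rnp$ these two computations say the same thing, and both require an approximation step beyond Schwartz functions which you should flag (the paper settles it by quoting the argument in Lemma~\ref{lem:T-bounds}). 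Where you genuinely go beyond the paper is the $C^1(\overline{\rnp})$ regularity: the paper dispenses with this in one sentence (``a classical result from potential theory''), whereas you give a self-contained proof of the continuity of $\mathcal R_jf=c\,I_1(\partial_j f)$ by splitting the convolution at scale $\delta$ and controlling the near singularity through the local $L^{\widetilde M}$-integrability of $|z|^{-(n-1)}$, which is exactly the condition \eqref{conv-alpha} guaranteed by $M=E$ near $0$. That detail is a welcome addition; the only minor inaccuracy in its phrasing is that the exterior piece of the split integral converges to the exterior piece at $x_0$ (not to the full $I_1(\partial_j f)(x_0)$), the continuity then following because the interior piece vanishes uniformly as $\delta\to0^+$, but the structure of the argument is sound.
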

\begin{proof}
The definition of the function $M$ guarantees that the assumptions of Lemma \ref{Sobolev-Riesz} are fulfilled with $A$ replaced with $M$. Also, thanks to \eqref{aug1}, $L^{M_n}(\rn) \to L^\infty(\rn)$. Hence,
\begin{align}\label{2024-23}
W^{1,M}(\rn)\to L^{M_n}(\rn)  \to L^{\infty}(\rn).
\end{align}
  Let $f \in W^{1,M}(\rn)$.
Thanks to the bounds  \eqref{2024-120} and \eqref{2024-120b}, with $\ell=1$, for 
the kernel of the operator $T_1$,
an analogous argument as in the proof of Lemma \ref{lem:T-bounds} tells us that $T_1f$ is weakly differentiable and 
\begin{align}\label{2024-300}
\nabla T_1f(x,x_{n+1})& =\bigg(\nabla_x\int_{\rn}K(y,x_{n+1})f(x-y)dy,\int_{\rn}\partial_{x_{n+1}}K(x-y,x_{n+1})f(y)dy \bigg) 
\\  \nonumber &   = \bigg(\int_{\rn}K(x-y,x_{n+1})\nabla f(y)dy,\int_{\rn}\partial_{x_{n+1}}K(x-y,x_{n+1})f(y)dy \bigg)
\end{align}
for a.e. $(x, x_{n+1})\in \rnp$.
Hence, the following chain holds:
\begin{align*}
\|\nabla T_1f\|_{L^{\infty}(\rnp)}&=\|\nabla_x T_1f\|_{L^{\infty}(\rnp)}+\|\partial_{x_{n+1}}T_1f\|_{L^{\infty}(\rnp)}\\
&\leq c\big(\|I_1(  |\nabla f|)\|_{L^{\infty}(\rn)}+\|\mathcal{H}f\|_{L^{\infty}(\rnp)}\big)\\
&\leq c'\big(\|\nabla f\|_{L^M(\rn)}+\| f\|_{L^{\infty}(\rn)}\big)\leq c''\|f\|_{W^{1,M}(\rn)},
\end{align*}
for suitable constants $c, c', c''$, and 
for $f\in W^{1,M}(\rn)$.
 Here, we used the right-most side of the chain \eqref{2024-300}
in the first inequality, 
 and   Lemma~\ref{Sobolev-Riesz}, the second embedding in \eqref{2024-23}
and equation \eqref{2024-22} in the second inequality.
\\   Moreover, 
\begin{align*}
\|T_1f\|_{L^{\infty}(\rnp)}\leq \| T_1|f|\|_{L^{\infty}(\rn)}
=\|I_1 |f|\|_{L^{\infty}(\rn)}\
\leq c \| f\|_{L^M(\rn)} \leq c\|f\|_{W^{1,M}(\rn)},
\end{align*}
for a suitable constant $c$, and 
for $f\in W^{1,M}(\rn)$.
On the other hand, by  \eqref{2024-23},
\begin{align*}
\|T_1f\|_{W^{1,M_n}(\rn)}&=\|T_1f\|_{L^{M_n}(\rn)}+\|\nabla T_1f\|_{L^{M_n}(\rn)}\\
&\leq c\|I_1f\|_{L^{M_n}(\rn)}+c\|I_1(\nabla f)\|_{L^{M_n}(\rn)}\leq c'\|f\|_{W^{1,M}(\rn)},
\end{align*}
for some constants $c$ and $c'$, and for $f\in W^{1,M}(\rn)$.
\\ We have thus shown that 
$$\|  \nabla T_1f\|_{L^{\infty}(\rnp)} + \|T_1f\|_{W^{1,M_n}(\rn)} \leq   c\|f\|_{W^{1,M}(\rn)}$$
  for some constant $c$ and for $f\in W^{1,M}(\rn)$.
\\
 Finally, our assumptions on $M$ ensure, via the property \eqref{cont}, that $f\in C(\rn)$. Hence, a classical result from potential theory tells us that $T_1f \in C^\infty (\rnp) \cap C^1(\overline{\rnp})$.
\end{proof}


\begin{lem}\label{lem:L:B-norm}
Let $\alpha \in (0,n)$ and assume that $A$ is a  Young function satisfying~\eqref{conv-alpha}. Let 
$A_{\frac{n}{\alpha}}$ be its Sobolev conjugate of order $\alpha$ given by \eqref{A-alpha},
and let   $B_{A,\frac{n}{\alpha}}$ be the Young function defined by
 \eqref{E:B}. 
Then,
\begin{equation}\label{E:holder}
\|uv\|_{L^{A}(\rn)} \leq 8 \|u\|_{L^{A_{\frac{n}{\alpha}}}(\rn)} \|v\|_{L^{B_{A,\frac{n}{\alpha}}}(\rn)}
\end{equation}
for $u\in L^{A_{\frac{n}{\alpha}}}(\rn)$ and $v\in L^{B_{A,\frac{n}{\alpha}}}(\rn)$.
\end{lem}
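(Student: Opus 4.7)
The plan is to deduce \eqref{E:holder} directly from the generalized Hölder inequality \eqref{holdergen}, so the only real work is to verify that the three Young functions involved satisfy the pointwise inverse relation
\begin{equation*}
A_{\frac{n}{\alpha}}^{-1}(t)\, B_{A,\frac{n}{\alpha}}^{-1}(t) \leq 4\, A^{-1}(t) \quad \text{for $t>0$,}
\end{equation*}
so that \eqref{ABC} holds with $k=4$, giving the constant $2k=8$ in \eqref{E:holder}.

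First I would extract, from Lemma~\ref{lem:L:B}, the inverse form of the two-sided estimate \eqref{E:equivalence_B}; since $A$ is increasing, the inequality $B_{A,\frac{n}{\alpha}}(t) \leq A(\overline{H}_{A,\frac{n}{\alpha}}^{-1}(t))$ yields
\begin{equation*}
B_{A,\frac{n}{\alpha}}^{-1}(t) \leq 2\,\overline{H}_{A,\frac{n}{\alpha}}(A^{-1}(t))
\end{equation*}
(the factor $2$ comes from combining the two sides of \eqref{E:equivalence_B}). By the definition \eqref{overA} of $\overline{H}_{A,\frac{n}{\alpha}}$ and \eqref{hatAn}, the right-hand side equals $2\,A^{-1}(t)/\widehat{H}_{A,\frac{n}{\alpha}}(A^{-1}(t)) = 2\,A^{-1}(t)/\widehat{A}_{\frac{n}{\alpha}}^{-1}(t)$.

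Next I would use Lemma~\ref{Anbis} to replace $\widehat{A}_{\frac{n}{\alpha}}^{-1}$ by $A_{\frac{n}{\alpha}}^{-1}$: the inequality $A_{\frac{n}{\alpha}}(t) \leq \widehat{A}_{\frac{n}{\alpha}}(t)$ from \eqref{equivsob} gives, upon inversion, $\widehat{A}_{\frac{n}{\alpha}}^{-1}(t) \geq \tfrac{1}{2}A_{\frac{n}{\alpha}}^{-1}(t)$ (again the factor $\tfrac{1}{2}$ comes from passing to inverses in the two-sided bound). Substituting, I obtain
\begin{equation*}
B_{A,\frac{n}{\alpha}}^{-1}(t)\, A_{\frac{n}{\alpha}}^{-1}(t) \leq 4\, A^{-1}(t) \quad \text{for $t>0$,}
\end{equation*}
which is exactly \eqref{ABC} with $k=4$ and $C=A$.

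Finally, I apply the generalized Hölder inequality \eqref{holdergen}, with $A$ replaced by $A_{\frac{n}{\alpha}}$, $B$ replaced by $B_{A,\frac{n}{\alpha}}$, and $C=A$, to conclude \eqref{E:holder} with constant $2\cdot 4 = 8$. There is no real obstacle here: every ingredient has been prepared in the previous lemmas, and the proof is essentially a bookkeeping of the multiplicative constants picked up on inverting the two-sided estimates in Lemmas~\ref{lem:L:B} and \ref{Anbis}.
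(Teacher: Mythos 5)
Your proposal is correct and follows the paper's own route: invert \eqref{E:equivalence_B} to obtain $B_{A,\frac{n}{\alpha}}^{-1}(t)\leq 2\overline{H}_{A,\frac n\alpha}(A^{-1}(t))$, rewrite the right-hand side as $2A^{-1}(t)/\widehat A_{\frac n\alpha}^{-1}(t)$, compare $\widehat A_{\frac n\alpha}^{-1}$ with $A_{\frac n\alpha}^{-1}$ via \eqref{equivsob}, and feed the resulting estimate \eqref{ABC} (with $k=4$) into \eqref{holdergen}. The paper does exactly this, folding everything into one chain.

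One small slip worth flagging: you attribute the bound $\widehat A_{\frac n\alpha}^{-1}(t)\geq\tfrac12 A_{\frac n\alpha}^{-1}(t)$ to the inequality $A_{\frac n\alpha}(t)\leq\widehat A_{\frac n\alpha}(t)$, but inverting that inequality only gives $A_{\frac n\alpha}^{-1}(t)\geq\widehat A_{\frac n\alpha}^{-1}(t)$, which is the wrong direction and carries no factor $\tfrac12$. The bound you actually want comes from the \emph{first} half of \eqref{equivsob}, namely $\widehat A_{\frac n\alpha}(t/2)\leq A_{\frac n\alpha}(t)$, whose inversion indeed yields $\widehat A_{\frac n\alpha}^{-1}(t)\geq\tfrac12 A_{\frac n\alpha}^{-1}(t)$; this is the inequality the paper cites. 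The conclusion and the constant $8$ are unaffected, but the citation should be corrected.
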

\begin{proof}
Passing to inverse functions in equation \eqref{E:equivalence_B} yields
$$
\overline{H}_{A, \frac{n}{\alpha}}(A^{-1}(t)) \leq B_{A, \frac{n}{\alpha}}^{-1}(t) \leq 2\overline{H}_{A, \frac{n}{\alpha}}(A^{-1}(t)) \qquad \text{for $t\geq 0$.}
$$
Consequently, by the first inequality in \eqref{equivsob},  
$$
  \frac{1}{2} A_{\frac{n}{\alpha}}^{-1}(t) B_{A, \frac{n}{\alpha}}^{-1}(t)  \leq \widehat{A}_{\frac{n}{\alpha}}^{-1}(t) B_{A, \frac{n}{\alpha}}^{-1}(t)\leq 2 \widehat{A}_{\frac{n}{\alpha}}^{-1}(t)\overline{H}_{A, \frac{n}{\alpha}}(A^{-1}(t))= 2A^{-1}(t) \qquad \text{for $t\geq 0$.}
$$
Inequality \eqref{E:holder} hence follows as a special case of the inequality \eqref{holdergen}.
\end{proof}

\begin{lem}
    \label{BF}
    Under the same assumptions as in Lemma \ref{EF}, there exists a constant $c$ such that
\begin{equation}\label{2024-44}
\|uv\|_{L^{F}(\rn)} \leq c \|u\|_{L^{F_{\frac{n}{\alpha}}}(\rn)} \|v\|_{L^{B_{E,\frac{n}{\alpha}}}(\rn)}
\end{equation}
for $u\in L^{F_{\frac{n}{\alpha}}}(\rn)$ and $v\in L^{B_{E,\frac{n}{\alpha}}}(\rn)$.
\end{lem}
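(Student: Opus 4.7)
The plan is to derive Lemma~\ref{BF} as a direct consequence of Lemma~\ref{EF} combined with the generalized H\"older inequality~\eqref{holdergen}, applied with the role of the triple $(A,B,C)$ played by $(F_{\frac n\alpha}, B_{E,\frac n\alpha}, F)$.

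First, I would invoke Lemma~\ref{EF} under the standing hypotheses to obtain a constant $c_1$ such that
\[
F_{\frac n\alpha}^{-1}(t)\, B_{E,\frac n\alpha}^{-1}(t) \leq c_1 F^{-1}(t) \qquad \text{for $t>0$.}
\]
This is exactly the pointwise comparison~\eqref{ABC} needed to apply the generalized H\"older inequality, with $A \leftrightarrow F_{\frac n\alpha}$, $B \leftrightarrow B_{E,\frac n\alpha}$ and $C \leftrightarrow F$. Note that $F$ is a Young function by hypothesis, while $F_{\frac n\alpha}$ is a Young function thanks to~\eqref{F} and the definition~\eqref{A-alpha}, and $B_{E,\frac n\alpha}$ is a Young function by Lemma~\ref{lem:L:B} applied to $E$ (which, by Lemma~\ref{prop1} together with Lemma~\ref{Enequiv}, satisfies the requisite hypotheses).

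Next, I would apply the inequality~\eqref{holdergen}, which gives
\[
\|uv\|_{L^{F}(\rn)} \leq 2 c_1 \|u\|_{L^{F_{\frac{n}{\alpha}}}(\rn)} \|v\|_{L^{B_{E,\frac{n}{\alpha}}}(\rn)}
\]
for every $u\in L^{F_{\frac n\alpha}}(\rn)$ and $v \in L^{B_{E,\frac n\alpha}}(\rn)$. Setting $c = 2c_1$ yields~\eqref{2024-44}, completing the proof.

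I do not anticipate any serious obstacle: all the analytic work has already been done in Lemma~\ref{EF} (which in turn rests on Lemma~\ref{lem:L:B} and Lemma~\ref{Anbis}), and the remaining step is a purely formal invocation of the generalized H\"older inequality. The only minor care to take is that the comparison of inverses holds globally in $t>0$, so that~\eqref{holdergen} applies without additional $L^\infty$ constraints on $u$ and $v$.
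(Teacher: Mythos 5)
Your argument matches the paper's proof exactly: both invoke Lemma~\ref{EF} to obtain the pointwise inequality~\eqref{2024-39} between inverses and then apply the generalized H\"older inequality~\eqref{holdergen} with $(A,B,C)=(F_{\frac n\alpha},B_{E,\frac n\alpha},F)$. The additional remarks verifying that the relevant functions are Young functions, and that the comparison holds for all $t>0$, are correct and harmless.
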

\begin{proof}
    Thanks to \eqref{2024-39}, the inequality \eqref{2024-44} is a special instance of   the inequality \eqref{holdergen}.
\end{proof}

\begin{lem}\label{L:lim0}
  Assume that $\alpha \in (0,n)$ and  $A$ is a finite-valued Young function.
  Let $D$ be the function defined as in \eqref{Dk}, with $\ell$ replaced with $\alpha$,  $E$ be given by \eqref{Enew}, and $E_{\frac{n}{\alpha}}$  be the Sobolev conjugate of $E$ defined as in \eqref{A-alpha}. Let $B_{E, \frac n\alpha}
$ be the function defined  by~\eqref{2024-31}. Assume that $\theta>0$. Then 
\begin{align}\label{2024-35}
   \lim_{\varepsilon \to 0^+} \,\sup_{\|u\|_{L^{E_{\frac{n}{\alpha}}}(\rn)}\leq \varepsilon} \left\| D(\theta|u|)\right\|_{L^{B_{E, \frac n\alpha}}(\rn)}=0. 
\end{align}
\end{lem}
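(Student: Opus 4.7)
\medskip

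\noindent\textbf{Proof proposal.} The plan is to reduce the claim to an elementary scaling estimate for the Luxemburg norm, after first establishing a pointwise control of the form $B_{E,\frac n\alpha}(D(t))\leq E_{\frac n\alpha}(2t)$.

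First, I would pin down the relation between $D$ and the auxiliary functions associated with $E$. Using equation~\eqref{nov12} (applied with $\alpha$ in place of $1$) one checks that $\widehat H_{E,\frac n\alpha}(t)=A^{-1}(t)^{\frac{n-\alpha}{n}}$. Comparing with the definition \eqref{Dk} of $D$ (with $\ell=\alpha$), this gives
\begin{equation*}
D(t)=\frac{\widehat H_{E,\frac n\alpha}^{-1}(t)}{t}
=\frac{\widehat H_{E,\frac n\alpha}^{-1}(t)}{\widehat H_{E,\frac n\alpha}\bigl(\widehat H_{E,\frac n\alpha}^{-1}(t)\bigr)}
=\overline H_{E,\frac n\alpha}\bigl(\widehat H_{E,\frac n\alpha}^{-1}(t)\bigr),
\end{equation*}
exactly as in equation~\eqref{2024-50}. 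Therefore $\overline H_{E,\frac n\alpha}^{-1}(D(t))=\widehat H_{E,\frac n\alpha}^{-1}(t)$. Plugging this into Lemma~\ref{lem:L:B} applied with $A$ replaced by $E$, and then invoking Lemma~\ref{Anbis} (also with $A$ replaced by $E$) together with the definition \eqref{hatEn}, I obtain the key pointwise bound
\begin{equation}\label{E:key}
B_{E,\frac n\alpha}(D(t))\leq E\bigl(\widehat H_{E,\frac n\alpha}^{-1}(t)\bigr)=\widehat E_{\frac n\alpha}(t)\leq E_{\frac n\alpha}(2t)\qquad\text{for $t\geq 0$.}
\end{equation}

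Second, I would exploit the convexity of $A$ (with $A(0)=0$) to derive a scaling property for $D$: for every $\delta\in(0,1)$ and $t\geq 0$,
\begin{equation*}
D(\theta t)=\frac{A\bigl(\delta^{\frac n{n-\alpha}}(\theta t/\delta)^{\frac n{n-\alpha}}\bigr)}{\theta t}
\leq \delta^{\frac n{n-\alpha}}\frac{A\bigl((\theta t/\delta)^{\frac n{n-\alpha}}\bigr)}{\theta t}
=\delta^{\frac{\alpha}{n-\alpha}}\,D(\theta t/\delta).
\end{equation*}

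Third, these two ingredients combine straightforwardly. Fix $\varepsilon>0$ with $\delta:=2\theta\varepsilon\in(0,1)$ and assume $\|u\|_{L^{E_{\frac n\alpha}}(\rn)}\leq\varepsilon$. Setting $\lambda:=\delta^{\frac{\alpha}{n-\alpha}}=(2\theta\varepsilon)^{\frac{\alpha}{n-\alpha}}$, the scaling property gives $D(\theta|u|)/\lambda\leq D(\theta|u|/\delta)$ pointwise. Since $B_{E,\frac n\alpha}$ is non-decreasing, \eqref{E:key} yields
\begin{equation*}
\int_{\rn}B_{E,\frac n\alpha}\!\left(\frac{D(\theta|u|)}{\lambda}\right)dx
\leq \int_{\rn}B_{E,\frac n\alpha}(D(\theta|u|/\delta))\,dx
\leq \int_{\rn}E_{\frac n\alpha}(2\theta|u|/\delta)\,dx
=\int_{\rn}E_{\frac n\alpha}(|u|/\varepsilon)\,dx\leq 1,
\end{equation*}
the last inequality being the definition of the Luxemburg norm. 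Hence $\|D(\theta|u|)\|_{L^{B_{E,\frac n\alpha}}(\rn)}\leq (2\theta\varepsilon)^{\frac{\alpha}{n-\alpha}}$, and the right-hand side tends to $0$ as $\varepsilon\to 0^+$, uniformly in $u$. The anticipated main obstacle is the identification \eqref{E:key}; once the algebra relating $\widehat H_{E,\frac n\alpha}$, $\overline H_{E,\frac n\alpha}$ and $D$ is set up as above, the rest is a one-line scaling argument.
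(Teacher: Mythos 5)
Your proof is correct, and it takes essentially the same route as the paper's: both hinge on the identity $\overline H_{E,\frac n\alpha}^{-1}(D(t))=\widehat H_{E,\frac n\alpha}^{-1}(t)$ combined with Lemma~\ref{lem:L:B} to bound $B_{E,\frac n\alpha}(D(t))$ by $\widehat E_{\frac n\alpha}(t)$, and then on the degree-one homogeneity built into $D$ (equivalently, the convexity of $A$ via \eqref{Ak}) to push a small factor outside the modular integral, arriving at the same explicit bound $\|D(\theta|u|)\|_{L^{B_{E,\frac n\alpha}}(\rn)}\leq (2\theta\varepsilon)^{\frac{\alpha}{n-\alpha}}$. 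The only difference is presentational: you isolate the pointwise comparison $B_{E,\frac n\alpha}(D(t))\leq E_{\frac n\alpha}(2t)$ and the scaling inequality $D(\theta t)\leq \delta^{\alpha/(n-\alpha)}D(\theta t/\delta)$ as two separate ingredients before combining them, whereas the paper carries out the same estimates inside a single chain of integrals; your factoring makes the role of convexity slightly more transparent, but there is no genuine departure in method.
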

\begin{proof}
Equation \eqref{2024-50} tells us that
$$
\overline{H}^{-1}_{E, \frac n\alpha}(D(t))=\widehat H_{E, \frac n\alpha}^{-1}(t) \qquad \text{for $t>0$} .
$$
Thus, thanks to Lemma~\ref{lem:L:B}, applied with $A$ replaced with $E$,
one has that
\begin{equation}\label{E:Bestimate}
B_{E, \frac n\alpha}(D(t)) \leq E(\overline{H}_{E, \frac n\alpha}^{-1}(D(t))=E(\widehat H_{E, \frac n\alpha}^{-1}(t))=\widehat E_{\frac n\alpha}(t) \qquad \text{for $t>0$.}
\end{equation}
Now, choose $\varepsilon >0$ such that $2\varepsilon\theta \leq 1$. 
By \eqref{Ak},~\eqref{E:Bestimate}, and equation ~\eqref{equivsob} with $A$ replaced with $E$,
\begin{align}\label{2024-34}
&\int_{\mathbb R^n} B_{E, \frac n\alpha}\bigg(\frac{D(\theta |u|)}{(2\varepsilon \theta)^{\frac{\alpha}{n-\alpha}}} \bigg)\,dx
=\int_{\rn\cap \{|u|>0\}} B_{E, \frac n\alpha}\left(\frac{A(\theta^{\frac{n}{n-\alpha}}|u|^{\frac{n}{n-\alpha}})}{\theta^{\frac{n}{n-\alpha}} (2\varepsilon)^{\frac{\alpha}{n-\alpha}}|u|}\right)\,dx\\ \nonumber
&\leq \int_{\mathbb R^n\cap \{|u|>0\}}B_{E, \frac n\alpha}\left(\frac{A\big(\frac{|u|^{\frac{n}{n-\alpha}}}{(2\varepsilon )^{\frac{n}{n-\alpha}}}\big)}{\frac{|u|}{2\varepsilon}}\right)\,dx
=\int_{\rn} B_{E, \frac n\alpha}\left(D\left(\frac{|u|}{2\varepsilon }\right)\right)\,dx\\ \nonumber
&\leq \int_{\mathbb R^n} \widehat{E}_{\frac n\alpha}\bigg(\frac{|u|}{2\varepsilon }\bigg)\,dx
\leq \int_{\mathbb R^n} E_{\frac{n}{\alpha}}\bigg(\frac{|u|}{\varepsilon}\bigg)\,dx
\leq 1.
\end{align}
This tells us that, 
 if  $u$ is such that $\|u\|_{L^{E_{\frac{n}{\alpha}}}(\rn)} \leq \varepsilon$, then
\begin{equation}\label{E:to_be_proved}
\left\| D(\theta|u|)\right\|_{L^{B_{E, \frac n\alpha}}(\rn)} \leq   \left(2\varepsilon \theta\right)^{\frac{\alpha}{n-\alpha}}.
\end{equation}
Hence, the inequality \eqref{E:to_be_proved} follows.
\end{proof}

\section{Proofs of Theorems  \ref{thm1}, \ref{thm2}, and \ref{thm:reg2}: harmonic functions }\label{second-order}

Our results about the second-order problem \eqref{eq:main-eq} are established in this section. As explained in Section \ref{intro}, although they are a special case of those concerning \eqref{eq:gen-eq}, a separate proof is offered for the readers' convenience.

{\color{black}

\begin{proof}[Proof of Theorem \ref{thm1}]
Set, for simplicity of notation,
 $$\mathbf{X}= V^{1,(E_\diamond,E_n)}(\rnp, \rn).$$ 
Given a function $f\in L^{E}(\rn)$, we define the 
 operator $\mathscr{L}$ by 
\begin{equation}\label{eq:fixed-pt-eq-proof}
\mathscr{L}u=T_1\big(\mathcal N(u)+f\big)
\end{equation}
for $u : \rnp \to \R$. We claim that, if $u  \in \mathbf{X}$ is a fixed point for $\mathscr{L}$, then it is 
 a weak solution to the problem \eqref{eq:main-eq}. Namely, 
 $u$ fulfills Definition \ref{def:weak-sol1}. To verify this claim, 
 we begin by observing that $\mathcal N(u)\in L^{E}(\rn)$. This inclusion follows from equation~\eqref{E:N} below. Also, recall that $E$ satisfies~\eqref{conv-Ealpha} with $\alpha=1$. Thus,
the argument from the proof of Lemma \ref{lem:reg} ensures that
one  can differentiate under the integral in the expression for $T_1(\mathcal N(u)+f)$.
Hence, given  $\varphi \in C^{\infty}_c(\overline{\rnp})$, owing to 
  Fubini's theorem and an integration by parts the following chain holds:
\begin{align*}
   \int_{\rnp}&\nabla u\cdot\nabla \varphi\, dxdx_{n+1}\\ \nonumber & =
   \int_{\rnp}\nabla T_1(\mathcal N(u)+f)\cdot\nabla \varphi \, dxdx_{n+1}\\
   &= \int_{\rnp} \bigg(\int_{\rn}\nabla K_1(x-y,x_{n+1})(\mathcal N(u)(y,0)+f(y))\,dy\bigg)\cdot\nabla \varphi (x,x_{n+1})\, dxdx_{n+1}\\
&=\int_{\rn}\bigg(\int_{\rnp}\nabla K_1(x-y,x_{n+1})\cdot\nabla \varphi (x,x_{n+1}) dxdx_{n+1}\bigg)(\mathcal N(u)(y,0)+f(y))\,dy\\
   &=\int_{\rn}(\mathcal N(u)(y,0)+f(y))\varphi(y,0)\,dy.
\end{align*}
Note that the last equality holds since, for each given $y\in \rn$, the kernel  $K_1$ is a distributional solution to the problem \begin{align*}
    \begin{cases}
\Delta K_1(x-y,x_{n+1})=0 \quad &\mbox{in}\,\,\rnp\\
\partial_{x_{n+1}}  K_1(x-y,0)=\delta_y(x)  \quad   &\mbox{on}\,\,\rn,
    \end{cases}
\end{align*}
where $\delta_y$ denotes the delta function centered at $y$.

The assertion about the existence of a weak solution to the problem \eqref{eq:main-eq}  will thus follow if we show that, 
 if $\|f\|_{L^E(\rn)}$ is sufficiently small, then the operator $\mathscr{L}$ admits a fixed point in $\mathbf{X}$.
\\
Let $\varepsilon>0$ to be chosen later and let $B^{\mathbf{X}}_{2\varepsilon}$ be the closed ball in $\mathbf{X}$, centered at the origin with radius $2\varepsilon$. First, we show that 
\begin{align}\label{2024-33}
\mathscr{L} : B^{\mathbf{X}}_{2\varepsilon} \to B^{\mathbf{X}}_{2\varepsilon},
\end{align}
provided that
 $\varepsilon$ is small enough. To verify this fact, fix \begin{align}\label{2024-36}
     u\in B^{\mathbf{X}}_{2\varepsilon}.
 \end{align} 
From Lemma \ref{lem:T-bounds} applied with $A$ replaced with $E$, we obtain
\begin{align}\label{eq:self-map}
\nonumber\|\mathscr{L}u\|_{\X}&=\|\nabla \,T_1(\mathcal N(u)+f)\|_{L^{E_\diamond}(\rnp)}+\|T_1(\mathcal N(u)+f)\|_{L^{E_n}(\rn)}\\
&\leq c(\|\mathcal N(u)\|_{L^{E}(\rn)}+\|f\|_{L^{E}(\rn)})
\end{align}
for some constant $c$.
Note that the assumptions of Lemma~\ref{lem:T-bounds} are satisfied, with $\ell =1$ and $A$ replaced by $E$, thanks to Lemmas~\ref{prop1} and~\ref{Enequiv}.
Let $B_{E, n}$ be the Young function introduced in~\eqref{E:B}, with $\alpha =1$. From assumption~\eqref{eq:N1} and the inequality~\eqref{E:holder},   applied with $A$ replaced with $E$ we obtain that
\begin{equation}\label{E:N}
\|\mathcal N(u)\|_{L^{E}(\rn)}
\leq c'\|u\|_{L^{E_n}(\rn)} \|D(|u|)\|_{L^{B_{E,n}}(\rn)}
\end{equation}
for some constant $c'$. Owing to the assumption \eqref{2024-36} and Lemma~\ref{L:lim0}, one can choose 
 $\varepsilon>0$  in such a way that 
$$
\left\| D(|u|)\right\|_{L^{B_{E,n}}(\rn)}\leq \frac{1}{2cc'}.
$$
 Therefore,  if \begin{align}\label{2024-47}
 \|f\|_{L^{E}(\rn)}\leq \varepsilon/c,
 \end{align}
 then equation \eqref{eq:self-map} implies that
 $\mathscr{L}(u) \in B^{\mathbf{X}}_{2\varepsilon}$, whence \eqref{2024-33} follows.
 \\ As a next step, we show that $\varepsilon$ can be chosen so small that the operator $\mathscr{L}$ is also a contraction on 
$B^{\mathbf{X}}_{2\varepsilon}$. An application 
of the assumption~\eqref{eq:N2} instead of~\eqref{eq:N1} and the same argument as above substantiate the following chain:
\begin{align}\label{eq:contraction}
\|\mathscr{L}u-\mathscr{L}v\|_{\X}&=\big\|\nabla\,T_1(\mathcal N(u)-\mathcal N(v))\big\|_{L^{E_\diamond}(\rnp)}+\|T_1(\mathcal N(u)-\mathcal N(v))\|_{L^{E_n}(\rn)}\\ \nonumber
&\leq c\|\mathcal N(u)-\mathcal N(v)\|_{L^{E}(\rn)}\\ \nonumber
&\leq \widehat c\|u-v\|_{L^{E_n}(\rn)}\left(\left\| D(\theta|u|)\right\|_{L^{B_{E,n}}(\rn)}+\left\|  D(\theta |v|)\right\|_{L^{B_{E,n}}(\rn)}\right)
\end{align}
for suitable constants $c$ and $\widehat c$, and 
for $u, v \in B^{\mathbf{X}}_{2\varepsilon}$.  Lemma~\ref{L:lim0} again ensures that
 $\varepsilon$ can be chosen so small that 
$$
\left\| D(\theta|u|)\right\|_{L^{B_{E,n}}(\rn)} \leq \frac{1}{4\widehat c}
$$
whenever $\|u\|_{L^{E_n}(\rn)}\leq 2\varepsilon$.  Hence, equation \eqref{eq:contraction} entails that
\begin{equation}
\|\mathscr{L}u-\mathscr{L}v\|_{\X}\leq \frac{1}{2}\|u-v\|_{L^{E_n}(\rn)}\leq \frac{1}{2}\|u-v\|_{\X}
\end{equation}
for $u,v\in B^{\mathbf{X}}_{2\varepsilon}(0)$. 
\\ Altogether, we have shown that if $\varepsilon$ is small enough, then the map \eqref{2024-33} is a contraction.
An application of the Banach fixed point theorem tells us that it admits a unique fixed point $u$. 
\\   The additional piece of information that $u \in C^\infty(\rnp)$ is a consequence of the fact that the kernel $K_1$ is smooth in $\rnp$ and, given any $L>0$ and $k,m \in \N\cup \{0\}$, satisfies the bound:
\begin{align}
    \label{2024-200}|\partial_{x_{n+1}}^k\nabla ^m_x K_1(x-y,x_{n+1})| \leq \frac {c} {(|x-y|^{2}+L^{2})^{\frac{n-1}{2}}} \quad \text{for $x, y \in \rn$ and $x_{n+1}\geq L$,}
\end{align}
for some constant $c=c(n,k,m,L)$. One has to use a standard argument outlined in the proof of Lemma \ref{lem:T-bounds}. 

 The inequality \eqref{nov4} is consequence of \eqref{2024-47} and of the fact that $u\in B^{\mathbf{X}}_{2\varepsilon}$.
\\
 It remains to prove the assertion about the sign of $u$.
Since the solution $u$ is a fixed point of the operator $\mathscr{L}$, it agrees with the limit 
 in $V^{1,(E_\diamond,E_n)}(\rnp,\rn)$  of the sequence  $\{u_j\}$ defined by
\begin{equation}\label{E:uj}
\begin{cases} & u_1=T_1f
\\
& u_{j+1}=T_1\big(\mathcal N(u_j)\big)+u_1 \quad \text{for} \quad j\in \mathbb   \mathbb N.
\end{cases}
\end{equation}
If   $f$ is positive a.e.\ and $\mathcal N(t)$ is positive for $t>0$, then obviously $u_1>0$.  By induction, we have that   $u_j\geq u_1>0$ for  $j\in \mathbb \N$. Hence, $u \geq u_1>0$, since the convergence in $V^{1,(E_\diamond,E_n)}(\rnp,\rn)$ implies  a.e.\ convergence, up to subsequences. 
\end{proof}

\begin{proof}[Proof of Theorem~\ref{thm2}]
Set, for brevity,
\begin{align}
    \label{Z}
    \Z=V^{1,(F_\diamond,F_n)}(\rnp,\rn).
\end{align}
In order to prove that $u\in \Z$, it suffices to show that the sequence $\{u_j\}$, defined by~\eqref{E:uj}, is a Cauchy sequence in $\Z$. Since $f\in L^{E\vee F}(\rn)$, then both $f \in L^{E}(\rn)$ and $f \in L^{F}(\rn)$.
  Let $\varepsilon$ and $c$
as in equation \eqref{2024-47}. 
An inspection of the proof of Theorem~\ref{thm1} shows that the smallness of $\|u_j\|_{L^{E_n}(\rn)}$ is guaranteed if
$\|f\|_{L^E(\rn)}< \sigma$ for a suitable  $\sigma\in (0,\tfrac \varepsilon c)$.
%
 In order to show that
$u_j \in \Z$ for each $j\geq 1$,  we argue by induction. 
 We have that $u_1 \in \Z$, 
by Lemma~\ref{lem:T-bounds}, applied with $A$ replaced with $F$.
Now assume that $j>1$.
Then,
\begin{align*}
\|u_j\|_{\Z}\leq \|u_{1}\|_{\Z} +\|T_1\big(\mathcal N(u_{j-1})\big)\|_{\Z}\leq \|u_{1}\|_{\Z} +c\|\mathcal N(u_{j-1})\|_{L^{F}(\rn)}.
\end{align*}
By \eqref{eq:N1}, the definition of $D$, the assumption \eqref{H} and the inequality \eqref{2024-44},  
\begin{align*}
\|\mathcal N(u_{j-1})\|_{L^{F}(\rn)}\leq
\|u_{j-1}\|_{L^{F_n}(\rn)} \left\| D(|u_{j-1}|)\right\|_{L^{B_{E,n}}(\rn)}
\leq \|u_{j-1}\|_{\Z} \left\| D(|u_{j-1}|)\right\|_{L^{B_{E,n}}(\rn)}.
\end{align*} 
Owing to Lemma~\ref{L:lim0},
$$
\left\| D(|u_{j-1}|)\right\|_{L^{B_{E,n}}(\rn)}<\infty,
$$
 thanks to the smallness of $\|u_{j-1}\|_{L^{E_n}(\rn)}$. 
 Altogether, we have that $u_j\in \Z$.
\\
Next, the following chain holds for  $j\geq 2$:
\begin{align*}
\|u_{j+1}-u_j\|_{\Z}
&=\big\|T_1[\mathcal N(u_j)-\mathcal N(u_{j-1})]\big\|_{\Z}
\leq c\|\mathcal N(u_j)-\mathcal N(u_{j-1})\|_{L^{F}(\rn)}\\
&\leq c' \|u_j-u_{j-1}\|_{L^{F_n}(\rn)} \left(\left\|D(\theta|u_j|)\right\|_{L^{B_{E,n}}(\rn)}+\left\| D(\theta|u_{j-1}|)\right\|_{L^{B_{E,n}}(\rn)}\right)
\end{align*}
for suitable constants $c$ and $c'$.
Here, we  used Lemma~\ref{lem:T-bounds}, with $A$ replaced with $F$, the assumption \eqref{eq:N2}, and Lemma \ref{BF}.
\\
We claim that 
$$
\left\| D(\theta|u_j|)\right\|_{L^{B_{E,n}}(\rn)} \leq \frac{1}{4c'} \quad \text{for } j\in \mathbb \N.
$$
By Lemma~\ref{L:lim0}, this claim  follows since, as observed above, $\|u_j\|_{L^{E_n}(\rn)}$ can be forced to be as small as we need for $j \in \mathbb \N$. 
%
We have thus proved that
$$
\|u_{j+1}-u_j\|_{\Z} \leq \frac{1}{2}\|u_j-u_{j-1}\|_{L^{F_n}(\rn)}
\leq \frac{1}{2}\|u_j-u_{j-1}\|_{\Z}  \quad \text{for } j\geq 2.
$$
Hence, $\{u_j\}$ is a Cauchy sequence in $ \Z$ converging to some function in $\Z$. This function must agree with $u$, whence \eqref{nov6} follows.
%
\end{proof}

}

\begin{proof}[Proof of Theorem~\ref{thm:reg2}]
The solution  under consideration to the problem \eqref{eq:main-eq}
is the limit in $V^{1,(E_\diamond,E_n)}(\rnp, \rn)$ of the sequence $\{u_j\}$ introduced in~\eqref{E:uj}. Define the space
$$\Y=\big\{u\in C^1(\overline{\rnp}): \, \tr \in W^{1,M_n}(\rn)\}.$$
Then $\Y$ is a Banach space endowed with the norm
$$\|u\|_{\Y}=\|u\|_{C^1(\overline{\rnp})}+  \|\tr \|_{W^{1,M_{n}}(\rn)}.$$
 Our goal is to show that  $\{u_j\}$ is Cauchy sequence in  $\Y$, provided that
$\|f\|_{W^{1,M}(\rn)}$ is sufficiently small. It will then follow that $u\in \Y$ and that  \eqref{nov10} holds.
\\  
Let $c_0$ and $t_0$ be as in the inequality \eqref{E:convexity}. Owing to our assumptions on $M$, there exists a constant $c_1$ such that
\begin{align}\label{2024-60inf}
\|v\|_{L^\infty (\rn)}+ \|\nabla v\|_{L^\infty (\rn)}\leq c_1\|v\|_{\Y}
\end{align}
for  $v\in \Y$.
Fix $\sigma >0$. Let $\varepsilon>0$ be such that 
\begin{align}\label{2024-59}
   \varepsilon< \frac 1\sigma.
\end{align} 
Assume that $v$ is any function in $\Y$ such that
\begin{align}\label{2024-60'}
\|v\|_{\Y}\leq \varepsilon.
\end{align}
Let $Q_1, Q_2 : [0, \infty) \to [0, \infty)$ be the functions  defined as 
$$Q_1(t)=M_{n} (2 \phi^{-1}(t)) \quad \text{and} \quad Q_2=M_{n} (2 \psi^{-1}(t)) \quad \text{for $t\geq 0$.}$$
The function $M_n(t)$ is finite-valued for small $t$ and admits a classical inverse. Moreover
\begin{equation*}
Q_1^{-1}(t)
Q_2^{-1}(t)=  \phi\big(\tfrac 1 2 M_n^{-1}(t)\big) \psi\big(\tfrac 1 2 M_n^{-1}(t)\big) \leq
D(\tfrac 12 M_n^{-1}(t)) \qquad \text{for $t \in (0, t_1)$,}
\end{equation*}
for a sufficiently small $t_1>0$.
{\color{black}
By Lemma \ref{lem:MnD} and Lemma \ref{lem:L:B} applied with $A$ replaced with $M$, the number $t_1$ can be chosen so small that
\begin{equation}\label{E:BD}
D(\tfrac 12 M_n^{-1}(t))\leq \overline H_{M,n}(M^{-1}(t)) \leq B_{M,n}^{-1}(t)\qquad
  \text{for $t\in (0,t_1)$, }
\end{equation}
where $\overline H_{M,n}$  is defined as in \eqref{2024-31} and $B_{M,n}$ is defined as in \eqref{2024-32} with $E$ replaced with $M$. Hence,
\begin{align}
    \label{2024-61}
    Q_1^{-1}(t)
Q_2^{-1}(t) \leq  B_{M,n}^{-1} (t)\quad \text{for $t\in (0,t_1)$.}
\end{align}
%
 Let $$k>\max\Big\{2,\frac {c_1}{t_0}\Big\},$$ where $t_0$ is the constant from \eqref{E:convexity}.
By \eqref{E:convexity}  and \eqref{2024-60'},
\begin{align}\label{Y1}
\int_{\rn}Q_1\bigg(\frac{\phi(\sigma|v|/k)}{c_0(\varepsilon\sigma)^{\gamma}}\bigg)dx
&\leq   \int_{\rn}Q_1\bigg(\phi\bigg(\frac{|v|}{k\varepsilon}\bigg)\bigg)dx =     \int_{\rn} M_n\bigg(2\phi^{-1}\bigg(\phi \bigg(\frac{|v|}{k\varepsilon}\bigg)\bigg)\bigg)dx
\\ \nonumber
&\leq \int_{\rn}M_n\bigg(\frac{|v|}{\varepsilon}\bigg)dx \leq 1.
\end{align}
On the other hand, 
\begin{align}\label{Y2}
\int_{\rn}Q_2\bigg( \psi\bigg(\frac{\sigma|v|}{k}\bigg)\bigg)\,dx
\leq  \int_{\rn}Q_2\bigg( \psi\bigg(\frac{|v|}{k\varepsilon}\bigg)\bigg)dx \leq \int_{\rn}M_n\bigg(\frac{|v|}{\varepsilon}\bigg)dx \leq 1.
\end{align}
Hence,
\begin{equation}\label{control-nonl}\|\phi(\sigma |v|/k)\|_{L^{Q_1}(\rn)}\leq  c_0 (\varepsilon\sigma)^{\gamma}\quad\mbox{and}\quad \|\psi(\sigma |v|/k)\|_{L^{Q_2}(\rn)}\leq 1.\end{equation} }
Notice that $u_1 \in \Y$ since 
$$
\|u_1\|_{\Y}=\|T_1 f\|_{\Y} \leq c\|f\|_{W^{1,M}(\rn)}
$$
for some constant $c$,
where the inequality is a consequence of Lemma~\ref{lem:reg}. This entails the smallness of  $\|u_1\|_{\Y}$  when $\|f\|_{W^{1,M}(\rn)}$ is small enough.
We next prove by induction that the same smallness property is enjoyed by $\|u_j\|_{\Y}$, uniformly in $j\in \N$.
Assume that \begin{align}\label{2024-60}
\|u_j\|_{\Y}\leq \varepsilon
\end{align}
for some $j\in \mathbb \N$.
 \\
By Lemma~\ref{lem:reg} again, 
\begin{align}\label{2024-72}
\|u_{j+1}\|_{\Y} 
=\|T_1(\mathcal N(u_j))+T_1 f\|_{\Y}
\leq c\|\mathcal N(u_j)\|_{W^{1,M}(\rn)}+c\|f\|_{W^{1,M}(\rn)}
\end{align}
for some constant $c$. Moreover,
\begin{equation}\label{eq:Nuj}
\|\mathcal N(u_j)\|_{W^{1,M}(\rn)}
=\|\mathcal N(u_j)\|_{L^M(\rn)} +\|\mathcal N'(u_j)\nabla u_j\|_{L^M(\rn)}.
\end{equation}
Assume, in addition, that $\varepsilon$ is such that
\begin{align}
    \label{2024-65}
    \varepsilon <   \frac {M_n^{-1}(t_1)}{{2c_1}}.
\end{align}
Therefore,
\begin{align}\label{2024-70}
\|\mathcal N(u_j)\|_{L^{M}(\rn)}
\leq c\|u_j\|_{L^{M_n}(\rn)} \|D(|u_j|)\|_{L^{B_{M,n}}(\rn)} 
\leq c'\varepsilon^{n'-1}\|u_j\|_{\Y},
\end{align}
for some constants $c, c'$ independent of $\varepsilon$. Here, the first inequality follows from~\eqref{eq:N1} and \eqref{E:holder} with $A$ replaced with $M$, whereas the second inequality is a consequence of the estimate~\eqref{E:BD} and of the first inequality in~\eqref{control-nonl} applied with $\sigma=k$ and with $\phi$ replaced with $D$. Note that \eqref{control-nonl} holds with $\gamma=n'-1$, $c_0=1$ and $t_0=\infty$ in the present situation, since, thanks to~\eqref{E:D-epsilon}, equation
\eqref{E:convexity} holds with $\phi = D$.
\\
Next, notice that  equation ~\eqref{eq:N2} implies that
\begin{align}\label{2024-81}
    |\mathcal N'(t)| \leq c \theta D(\theta t) \quad \text{for a.e. $t>0$,}
\end{align}
 and for some constant $c$.
Hence, an analogous chain as in \eqref{2024-70}, with \eqref{control-nonl} now applied with $\sigma =  k\theta$, yields
\begin{align}\label{2024-71}
   \| \mathcal N'(u_j)\nabla u_j\|_{L^M(\rn)}
\leq c\|\nabla u_j\|_{L^{M_n}(\rn)} \|D(\theta|u_j|)\|_{L^{B_{M,n}}(\rn)}
\leq c'\varepsilon^{n'-1} \|u_j\|_{\Y},
\end{align}
for some constants $c, c'$ independent of $\varepsilon$.
From \eqref{2024-72}, \eqref{2024-70}, and \eqref{2024-71} we deduce that 
\begin{align}\label{2024-73}
\|u_{j+1}\|_{\Y}\leq \varepsilon,
\end{align}
provided that $\varepsilon$ is sufficiently small, and 
$\|f\|_{W^{1,M}(\rn)}\leq  \frac{\varepsilon}{2c}$, where  $c$ is the constant appearing in \eqref{2024-72}.   As a consequence, since the inequalities in \eqref{control-nonl}
follow from \eqref{2024-60},  they hold for every $j \in \N$.
\\
 The next step consists of a bound for $\|u_{j+1}-u_j\|_{\Y}$ for $j \in \N$.
 We begin
with an estimate for the norm $\|u_{j+1}-u_j\|_{L^{M_n}(\rn)}$. An application of Lemma \ref{Sobolev-Riesz}, with $A$ replaced with $M$, yields, via the same steps as in \eqref{2024-70},
\begin{align}\label{E:mn}
\|u_{j+1}-u_j\|_{L^{M_n}(\rn)}
 &=\|T_1(\mathcal N(u_j)-\mathcal N(u_{j-1}))\|_{L^{M_n}(\rn)}\\
\nonumber &= c\|I_1(\mathcal N(u_j)-\mathcal N(u_{j-1}))\|_{L^{M_n}(\rn)}\\
\nonumber
&\leq c'\|\mathcal N(u_j)-\mathcal N(u_{j-1})\|_{L^M(\rn)}\\
\nonumber
&\leq c''\|u_j-u_{j-1}\|_{L^{M_n}(\rn)} \big(\|D(\theta|u_j|)\|_{L^{B_{M,n}}(\rn)} +\|D(\theta|u_{j-1}|)\|_{L^{B_{M,n}}(\rn)}\big)\\
& \nonumber \leq c'''  \varepsilon^{n'-1}\|u_j-u_{j-1}\|_{L^{M_n}(\rn)} 
\end{align}
for sufficiently small $\varepsilon$ and for suitable constants $c, c', c{''}, c{'''}$. The smallness of 
 $\varepsilon$ hence    ensures that
$$
\|u_{j+1}-u_j\|_{L^{M_n}(\rn)}\leq \frac{1}{2}\|u_{j}-u_{j-1}\|_{L^{M_n}(\rn)},
$$
for $j \in \N$,
whence
\begin{equation}\label{E:decay-mn}
\|u_{j+1}-u_j\|_{L^{M_n}(\rn)}\leq 2^{-j}.
\end{equation}
An application of Lemma \ref{lem:reg} tells us that  
\begin{align}\label{E:c2}
\|u_{j+1}-u_j\|_{\Y}
&\leq \overline c\|\mathcal N(u_j)-\mathcal N(u_{j-1})\|_{W^{1,M}(\rn)}\\
\nonumber
&= \overline c\|\mathcal N(u_j)-\mathcal N(u_{j-1})\|_{L^{M}(\rn)}+ \overline c\|\nabla(\mathcal N(u_j)-\mathcal N(u_{j-1}))\|_{L^{M}(\rn)},
\end{align}
  for some constant $\overline c$.
From the estimates ~\eqref{E:mn} and ~\eqref{E:decay-mn} one infers that
\begin{align}\label{2024-80}
    \|\mathcal N(u_j)-\mathcal N(u_{j-1})\|_{L^{M}(\rn)} \leq 2^{-j}
\end{align}
for $j \in \N$,
if $\varepsilon$ is small enough.
Furthermore,
\begin{align}\label{E:lm}
\|\nabla(\mathcal N(u_j)&-\mathcal N(u_{j-1}))\|_{L^{M}(\rn)}\\
&\leq \|\nabla(u_j-u_{j-1}) \mathcal N'(u_j)\|_{L^M(\rn)} +\|\nabla u_{j-1} (\mathcal N'(u_j)-\mathcal N'(u_{j-1}))\|_{L^M(\rn)}. \nonumber
\end{align}
Thanks to \eqref{2024-81} and to analogous steps as above,
\begin{align}\label{2024-102}
\|\nabla(u_j-u_{j-1}) \mathcal N'(u_j)\|_{L^M(\rn)}
&\leq c\|\nabla(u_j-u_{j-1})D(\theta|u_j|)\|_{L^{M}(\rn)}\\
&\leq \nonumber c'\|\nabla(u_j-u_{j-1})\|_{L^{M_n}(\rn)} \|D(\theta |u_j|)\|_{L^{B_{M,n}}(\rn)}\\
&\leq c'' \nonumber   \varepsilon^{n'-1}\|u_j-u_{j-1}\|_{\Y} 
\\ \nonumber
&\leq \frac{1}{2 \overline c}\|u_j-u_{j-1}\|_{\Y},
\end{align}
 for some constants $c, c', c{''}$,
provided that $\varepsilon$ is sufficiently small.  Here, $\overline c$ denotes the constant from~\eqref{E:c2}.
\\ Finally, the same arguments that 
yield \eqref{control-nonl}, with $\varepsilon=2^{-j}$ and a suitable choice of $\sigma$, and \eqref{E:decay-mn} imply that
\begin{equation}\label{Yi}
\|\phi(\theta|u_j-u_{j-1}|)\|_{ L^{Q_1}(\rn)}\leq  c 2^{-j\gamma},
\end{equation}
  for some constant $c$.
By making use of the assumption \eqref{eq:N3}, the inequality~\eqref{2024-61},  the inequality~\eqref{holdergen} in the version under $L^\infty$-bounds for trial functions, as well as \eqref{Yi} and \eqref{control-nonl}, we deduce that
\begin{align}\label{2024-85}
\|\nabla u_{j-1}& (\mathcal N'(u_j)-\mathcal N'(u_{j-1}))\|_{L^M(\rn)}\\ \nonumber
&\,\,\leq c\|\nabla u_{j-1}\|_{L^{M_n}(\rn)} \|\mathcal N'(u_j)-\mathcal N'(u_{j-1})\|_{L^{B_{M,n}}(\rn)}\\ \nonumber
&\,\,\leq c'\|u_{j-1}\|_{\Y} (\|\phi(\theta|u_j-u_{j-1}|)\psi(\theta|u_j|)\|_{L^{B_{M,n}}(\rn)}+\|\phi(\theta|u_j-u_{j-1}|)\psi(\theta|u_{j-1}|)]\|_{L^{B_{M,n}}(\rn)}\\ \nonumber
&\,\,\leq c''\varepsilon
\|\phi(\theta|u_j-u_{j-1}|)\|_{ L^{Q_1}(\rn)}
\left( \| \psi(\theta|u_j|)\|_{ L^{Q_2}(\rn)} + \| \psi(\theta|u_{j-1}|)\|_{ L^{Q_2}(\rn)} \right) \\ \nonumber
& \,\, \leq c'''\varepsilon
2^{-j\gamma},
\end{align}
  for suitable constants $c, c', c{''}, c{'''}$.
Altogether, we conclude that
$$
\|u_{j+1}-u_j\|_{\Y}
\leq \frac{1}{2}\|u_{j}-u_{j-1}\|_{\Y}+c2^{-j\gamma}
$$
for   some constant $c$ and for $j\in \N$.
Iterating this inequality yields
\begin{align}
    \label{2024-107}
    \|u_{j+1}-u_j\|_{\Y}
\leq \frac{1}{2^{j-1}} \|u_2-u_1\|_{\Y} +c2^{-j
} \sum_{i=0}^{j-2} 2^{ (1-\gamma)i}
\leq  c'(2^{-j}+ 2^{-j\gamma}),
\end{align}
  for some constants $c, c'$.
Since $\gamma>0$, it follows that $\{u_j\}$ is a Cauchy sequence in $\Y$. The proof is complete.
\end{proof}

\section{Proofs of Theorems  \ref{thm:g}--\ref{thm:gen-eq-reg}: polyharmonic functions }
\label{higher-order}
Our approach to the higher-order problem \eqref{eq:gen-eq} is similar to that exposed 
 in the previous section. 
In the proofs offered below, we shall thus mainly focus on the steps requiring major variants. 

\begin{proof}[Proof of Theorem \ref{thm:g}] 
Consider the operator $\mathscr{L}$ 
defined as
\begin{equation}\label{eq:FP_poly-proof}
\mathscr{L}u=T_\ell\big(\mathcal N(u)+f\big)
\end{equation}
for $u : \rnp \to \R$. 
 A fixed point   $u\in V^{\ell,(E_\diamond,E_{\frac{n}{\ell}})}(\rnp, \rn)$ for this map is a
    solution to the problem~\eqref{eq:gen-eq}. To verify this assertion, we need to show that 
\begin{equation}\label{weak_sol_for}\int_{\rnp}\nabla^mu\cdot\nabla^m\varphi \, dx dx_{n+1} =\int_{\rn}(\mathcal N(u)(x,0)+f(x))\varphi (x,0)\,dx\end{equation}
for every      $\varphi \in C^{\infty}_c(\overline{\rnp})$ such that $\partial_{x_{n+1}}\Delta^k\varphi(\cdot,0)=0$  for  $k=0,\dots m-2$.
Fixing  $y \in \rn$, the kernel $K_{\ell}$ of the operator $T_{\ell}$ is a distributional solution to the problem  
\begin{align*}
    \begin{cases}
\Delta^m K_{\ell}(x-y,x_{n+1})=0 \quad & \text{in $\rnp$} \\
\partial_{x_{n+1}}\Delta^{k}K_{\ell}
(x-y,0)=0
\quad 
& \text{on $\rn$, for $k=0,1,...,m-2$}\\
(-1)^m
\partial_{x_{n+1}}\Delta^{m-1}K_{\ell}
(x-y,0)=\delta_y(x)    \quad & \text{on $\rn$.}
    \end{cases}
\end{align*}
Hence, if $\varphi$ is as above, then
\begin{align}\label{IBPpoly}
\int_{\rn}K_{\ell}(x-y,0)\partial_{x_{n+1}} &\Delta^{m-1}\varphi(x,0)-(-1)^{m} \varphi(x,0)\delta_y(x)
\,dx\\ 
\nonumber
&+\int_{\rnp}K_{\ell}(x-y, x_{n+1})\Delta^m\varphi(x,  x_{n+1}) \, dx  dx_{n+1}=0.
\end{align}
An iterated integration by parts and the use of the conditions
$\partial_{x_{n+1}}\Delta^k\varphi(\cdot,0)=0$ for $k=0, \dots ,m-2$ yield
\begin{align}
\label{claim}\int_{\rnp} K_{\ell}(x-y, x_{n+1})&\Delta^m\varphi (x, x_{n+1})\, dxdx_{n+1} \\ \nonumber & =  (-1)^m\int_{\rnp}\nabla^m K_{\ell}(x-y, x_{n+1})\cdot\nabla^m \varphi (x, x_{n+1})\, dxdx_{n+1} 
 \\ \nonumber & \quad -\int_{\rn}K_{\ell}(x-y,0)\partial_{x_{n+1}}(\Delta^{m-1}\varphi)(x,0) \,dx.
\end{align}
Coupling \eqref{IBPpoly} with \eqref{claim}  produces
$$\int_{\rnp}\nabla^m K_{\ell}(x-y,x_{n+1})\cdot\nabla^m  \varphi (x,x_{n+1})\, dxdx _{n+1}=\varphi(y,0).$$
As a consequence, thanks to equation \eqref{2024-135}, the following chain holds:
\begin{align*}
   \int_{\rnp}\nabla^m u & \cdot\nabla^m \varphi \, dxdx_{n+1} \\ \nonumber &=
   \int_{\rnp}\nabla^m T_{\ell}(\mathcal N(u)+f)\cdot\nabla^m \varphi \, dxdx_{n+1} \\
   &= \int_{\rnp} \bigg(\int_{\rn}\nabla^m K_\ell(x-y,x_{n+1})(\mathcal N(u)(y,0)+f(y))dy\bigg)\cdot\nabla^m \varphi (x,x_{n+1})\, dxdx_{n+1} \\
&=\int_{\rn}\bigg(\int_{\rnp}\nabla^m K_\ell(x-y,x_{n+1})\cdot\nabla^m \varphi (x,x_{n+1}) dxdx_{n+1}\bigg)(\mathcal N(u)(y,0)+f(y))dy\\
   &=\int_{\rn}(\mathcal N(u)(y,0)+f(y))\varphi(y,0)dy,
\end{align*}
namely \eqref{weak_sol_for}.
\\
Our aim is now to show that
  the integral equation \eqref{eq:FP_poly-proof} admits a unique   fixed point $u$, provided that the norm $\|f\|_{L^E(\rn)}$ is sufficiently small. 
Let us set, for simplicity of notation, $$\mathbf{X}= V^{\ell,(E_\diamond,E_{\frac{n}{\ell}})}(\rnp, \rn).$$ 
   To prove that
\begin{align}\label{2024-86}
\mathscr{L} : B^{\mathbf{X}}_{2\varepsilon} \to B^{\mathbf{X}}_{2\varepsilon},
\end{align}
for sufficiently small
 $\varepsilon$, one can argue as in the proof of Theorem~\ref{thm1}, and  obtain that
\begin{align}\label{2024-87}
\|\mathscr{L}u\|_{\X}
\leq c\big(\|u\|_{L^{E_{\frac{n}{\ell}}}(\rn)} \|D(|u|)\|_{L^{B_{E,\frac n\ell}}(\rn)}+\|f\|_{L^{E}(\rn)}\big),
\end{align}
  for some constant $c$.
Thanks to Lemma~\ref{L:lim0}, there exists $\varepsilon>0$ such that 
$$
 \|D(|u|)\|_{L^{B_{E,\frac n\ell}}(\rn)}\leq \frac{1}{2c}
$$
whenever $\|u\|_{\X}\leq 2\varepsilon$. Thus, if $\|f\|_{L^{E}(\rn)}\leq \varepsilon/c$, then   equation \eqref{2024-87} ensures that $\|\mathscr{L}(u)\|_{\X}\leq 2\varepsilon$. Hence, \eqref{2024-86} follows.
\\ As for the contraction property of the operator $\mathscr{L}$, one similarly finds that
\begin{align*}
\|\mathscr{L}u-\mathscr{L}v\|_{\X}
\leq c\|u-v\|_{L^{E_{\frac{n}{\ell}}}(\rn)}\big(\left\| D(\theta|u|)\right\|_{L^{B_{E, \frac n\ell}}(\rn)}+\left\|  D(\theta |v|)\right\|_{L^{B_{E, \frac n\ell}}(\rn)}\big),
\end{align*}
  for some constant $c$.
By Lemma~\ref{L:lim0} again, there exists $\varepsilon>0$ such that
$$
\left\| D(\theta|u|)\right\|_{L^{B_{E, \frac n\ell}}(\rn)}+\left\|  D(\theta |v|)\right\|_{L^{B_{E, \frac n\ell}}(\rn)} \leq \frac 1{2c}
$$
provided that  $\|u\|_{L^{E_{\frac{n}{\ell}}}(\rn)}\leq 2\varepsilon$ and $\|v\|_{L^{E_{\frac{n}{\ell}}}(\rn)}\leq 2\varepsilon$.  Hence,
\begin{equation*}
\|\mathscr{L}u-\mathscr{L}v\|_{\X}\leq \frac{1}{2}\|u-v\|_{L^{E_{\frac{n}{\ell}}}(\rn)}\leq \frac{1}{2}\|u-v\|_{\X}
\end{equation*}
for $u,v\in B^{\mathbf{X}}_{2\varepsilon}$. An application of the Banach fixed point theorem yields the existence of a unique fixed point for the map \eqref{2024-86}. 
\\  Since the kernel $K_\ell$ is smooth in $\rnp$ and, given any $L>0$ and $k,m \in \N\cup \{0\}$, 
\begin{align}
    \label{2024-201}|\partial_{x_{n+1}}^k\nabla ^m_x K_\ell(x-y,x_{n+1})| \leq \frac {c} {(|x-y|^{2}+L^{2})^{\frac{n-\ell}{2}}} \quad \text{for $x, y \in \rn$ and $x_{n+1}\geq L$,}
\end{align}
for some constant $c=c(n,\ell,k,m,L)$,   the argument sketched in the proof of Lemma \ref{lem:T-bounds} ensures that  $u \in C^\infty(\rnp)$.

Being the fixed point of the map \eqref{2024-86},
 the function $u$ is the limit in $V^{\ell,(E_\diamond,E_{\frac{n}{\ell}})}(\rnp,\rn)$  of the sequence $\{u_j\}$ defined as
\begin{equation}\label{Picard}
\begin{cases} & u_1=T_\ell f
\\
& u_{j+1}=T_\ell\big(\mathcal N(u_j)\big)+u_1 \quad \text{for} \quad j\in \mathbb \N.
\end{cases}
\end{equation}
If   $f>0$ is positive a.e.\ and $\mathcal N(t)>0$ is positive for $t>0$, then  $u_1>0$ and,  by induction,   $u_j\geq u_1>0$ for  $j\in \mathbb \N$. Thus, $u \geq u_1>0$, since the convergence in $V^{\ell,(E_\diamond,E_{\frac n\ell})}(\rnp,\rn)$ implies the   convergence a.e. \ of a subsequence of $\{u_j\}$.
\end{proof}

\begin{proof}[Proof of Theorem~\ref{thm4}]
The proof is analogous to that of Theorem \ref{thm2}. One has just to define the space $\Z$ as
$$\Z=V^{\ell,(F_\diamond,F_{\frac{n}{\ell}})}(\rnp,\rn),$$ 
to replace the first-order gradient $\nabla$ with the operator $\nabla^\ell$, and the functions $F_n$ and $B_{E,n}$ with $F_{\frac n\ell}$ and $B_{E,\frac n\ell}$. Of course, the sequence $\{u_j\}$ is now defined as in \eqref{Picard}. We skip the details for brevity.
\end{proof}

\begin{proof}[Proof of Theorem~\ref{thm:gen-eq-reg}.]   
Set $w=\Delta^{m-1}u$. Then, $w$ solves the second-order problem 
\begin{align}
\begin{cases}
\Delta w=0\,\,\mbox{ in }\,\,\mathbb{R}^{n+1}_{+}\\
(-1)^{m}\partial_{x_{n+1}}w=\mathcal N(u)+f\,\,\mbox{ on }\,\,\partial\mathbb{R}^{n+1}_{+}.
\end{cases}
\end{align}
If $\mathcal N(u)$ has the same regularity as $f$, namely, if \begin{align}
    \label{2024-270}
    \mathcal N(u)\in W^{1,M}(\rn),
\end{align} then, by Lemma~\ref{lem:reg}, $w\in C^1(\overline{\rnp})$.  Consequently, equation \eqref{nov10k}
holds by the classical elliptic regularity theory \cite{ADN}. It thus suffices to establish equation \eqref{2024-270}.
\\ Consider the sequence $\{u_j\}$ of functions $u_j : \rn \to \R$ defined as 
\begin{equation}\label{E:ujell}
\begin{cases}  \,u_1=I_{\ell}f
\\
 u_{j+1}=I_{\ell}(\mathcal N(u_{j}))+u_1 \quad \text{for} \quad j\in \mathbb N.
\end{cases}
\end{equation}
We shall prove that $\{u_j\}$ is a Cauchy sequence in the Orlicz-Sobolev space $W^{1,M_{\frac{n}{\ell}}}(\rn)$, provided that the norm $\|f\|_{W^{1,M}(\rn)}$ is small enough. 
\\  Thanks to   \eqref{Mk}, there exists a constant $c_2$ such that
\begin{align}
    \label{2024-90}
    \|v\|_{L^\infty(\rn)}\leq c_2  \|v\|_{L^{M_{\frac{n}{\ell}}}(\rn)}
\end{align}
for $v\in L^{M_{\frac{n}{\ell}}}(\rn)$.
Fix $\sigma >0$. Let  
 $\varepsilon>0$ be such that
 $\varepsilon < \frac 1\theta$. 
 Let $v \in L^{M_{\frac{n}{\ell}}}(\rn)$ be any function such that $$\|v\|_{L^{M_{\frac{n}{\ell}}}(\rn)}\leq \varepsilon.$$
\\
Define the functions $Q_1, Q_2 : [0, \infty) \to [0, \infty)$ as
$Q_1(t)=M_{n/\ell} (2\phi^{-1}(t))$ and $Q_2(t) =M_{n/\ell}( 2 \psi^{-1}(t))$ for $t \geq 0$.
Then,
 $$Q_1^{-1}(t)
Q_2^{-1}(t)=D(\tfrac 12 M_{\frac n\ell}^{-1}(t)) \qquad \text{for $t\geq 0$.}$$
Moreover, 
by Lemma \ref{lem:MnD} and Lemma \ref{lem:L:B} applied with $A$ replaced with $M$, there exists $t_1$ such that
\begin{equation}\label{E:db2}
D(\tfrac 12 M_{\frac n\ell}^{-1}(t))\leq \overline H_{M,\frac n\ell}(M^{-1}(t)) \leq B_{M,\frac n\ell}^{-1}(t)\qquad
  \text{for $t\in (0,t_1)$, }
\end{equation}
where $\overline H_{M,\frac n\ell}$  is defined as in \eqref{2024-31} and $B_{M,\frac n\ell}$ is defined as in \eqref{2024-32} with $E$ replaced with $M$. Therefore,  
\begin{align}
    \label{2024-61-b}
    Q_1^{-1}(t)
Q_2^{-1}(t) \leq  B_{M,\frac n\ell}^{-1} (t)\quad \text{for $t\in (0,t_1)$.}
\end{align}
%
 Let $$k>\max\Big\{2,\frac {c_2}{t_0}\Big\},$$ where $t_0$ is the constant appearing in \eqref{E:convexity}.  Analogous chains as in \eqref{Y1} and \eqref{Y2} tell us that 
\begin{align}\label{2024-92}
   \|\phi(\sigma |v|/k)\|_{L^{Q_1}(\rn)}\leq  c_0 (\varepsilon\sigma)^{\gamma} \quad\mbox{and}\quad \|\psi(\sigma |v|/k)\|_{L^{Q_2}(\rn)}\leq 1,
\end{align}
where $c_0$ is the constant from \eqref{E:convexity}.
\\  In particular, if $\phi=D$ and $\psi =1$, then the inequality \eqref{E:convexity} holds with $\gamma=\frac{\ell}{n-\ell}$. Hence, 
  if 
\begin{align*}
    \varepsilon <  \frac {M_{\frac{n}{\ell}}^{-1}(t_1)}{2c_2}
\end{align*}
then equation \eqref{2024-92}  combined with~\eqref{E:db2} implies that
\begin{align}\label{2024-93}
   \|D(\sigma|v|/k)\|_{L^{B_{M,
   \frac{n}{\ell}}}(\rn)} \leq c \sigma^{\frac{\ell}{n-\ell}} \varepsilon^{\frac{\ell}{n-\ell}}. 
\end{align}
From Lemma \ref{Sobolev-Riesz}, applied with $A$ replaced with $M$, we deduce that $u_1\in W^{1,M_{\frac{n}{\ell}}}(\rn)$. Moreover, 
$\|u_1\|_{W^{1,M_{\frac{n}{\ell}}}(\rn)}\leq \varepsilon$ provided that  
$\|f\|_{  W^{1,M}(\rn)}$ is small enough.
\\ Next, assume that 
\begin{align}
    \label{2024-90bis}
    \|u_j\|_{W^{1,M_{\frac{n}{\ell}}}(\rn)}\leq \varepsilon 
\end{align}
for some $j \in \N$.
We have that
\begin{align}\label{2024-95}
 \|u_{j+1}\|_{W^{1,M_{\frac{n}{\ell}}}(\rn)} 
& \leq c\|\mathcal N(u_j)\|_{W^{1,M}(\rn)}+c\|f\|_{W^{1,M}(\rn)}
\\ \nonumber & = c\|\mathcal N(u_j)\|_{L^M(\rn)} +c\|\mathcal N'(u_j)\nabla u_j\|_{L^M(\rn)}+c\|f\|_{W^{1,M}(\rn)},
\end{align}
  for some constant $c$.
Through an appropriate choice of $\sigma$ in   \eqref{2024-93},
an analogous chain as in \eqref{2024-70} tells us  that there exists a constant $c$ such that
\begin{align}\label{2024-97}\|\mathcal N(u_j)\|_{L^M(\rn)}&
\leq 
 c \varepsilon^{  \frac{\ell}{n-\ell}} \|u_j\|_{W^{1,M_{\frac{n}{\ell}}}(\rn)} .
\end{align}
Also, similarly to \eqref{2024-71},
\begin{align}\label{2024-98}
\|\mathcal N'(u_j)\nabla u_j\|_{L^M(\rn)}
\leq  c \varepsilon^{ \frac{\ell}{n-\ell}} \|u_j\|_{W^{1,M_{\frac{n}{\ell}}}(\rn)},
\end{align}
 for some constant $c$.
Therefore, an  induction argument ensures that 
$\|f\|_{W^{1,M}(\rn)}$ can be chosen small enough for equation \eqref{2024-90bis}
to hold for every $j \in \N$.
Via a suitable choice of $\sigma$ in \eqref{2024-93}, the same steps as in equation \eqref{E:mn} yield:
\begin{align}\label{E:mn2}
\|u_{j+1}-u_j\|_{L^{M_{\frac{n}{\ell}}}(\rn)}
\leq c 
\|\mathcal N(u_j)-\mathcal N(u_{j-1})\|_{L^M(\rn)}
\leq c'
\varepsilon^{ \frac{\ell}{n-\ell}}\|u_j-u_{j-1}\|_{L^{M_{\frac{n}{\ell}}}(\rn)}
\end{align}
 for suitable constants $c, c'$,
provided that $\varepsilon$ is sufficiently small. Consequently, if $\varepsilon$ is properly chosen, one deduces that
\begin{align} \label{2024-99}
\|u_{j+1}-u_j\|_{L^{M_{\frac{n}{\ell}}}(\rn)}\leq \frac{1}{2}\|u_{j}-u_{j-1}\|_{L^{M_{\frac{n}{\ell}}}(\rn)}
\end{align}
for $j \in \N$, whence
\begin{equation}\label{E:decay-mn2}
\|u_{j+1}-u_j\|_{L^{M_{\frac{n}{\ell}}}(\rn)}\leq 2^{-j}.
\end{equation}
Now, observe that  there exists a constant $c_3$ such that
\begin{align}\label{2024-100}
\|u_{j+1}-u_j\|_{W^{1,M_{\frac{n}{\ell}}}(\rn)}
&\leq c_3\|\mathcal N(u_j)-\mathcal N(u_{j-1})\|_{W^{1,M}(\rn)}\\
\nonumber
&\leq c_3\|\mathcal N(u_j)-\mathcal N(u_{j-1})\|_{L^{M}(\rn)}+c_3\|\nabla(\mathcal N(u_j)-\mathcal N(u_{j-1}))\|_{L^{M}(\rn)}.
\end{align}
Equations ~\eqref{E:mn2} and ~\eqref{E:decay-mn2} imply that
\begin{align}
    \label{2024-101}
    \|\mathcal N(u_j)-\mathcal N(u_{j-1})\|_{L^{M}(\rn)} \leq 2^{-j},
\end{align}
provided that $\varepsilon$ is sufficiently small.
On the other hand,
\begin{align}\label{E:lm2}
&\|\nabla(\mathcal N(u_j)-\mathcal N(u_{j-1}))\|_{L^{M}(\rn)}\\
\nonumber
&\leq \|\mathcal N'(u_j)\nabla(u_j-u_{j-1}) \|_{L^M(\rn)} +\|\nabla u_{j-1} (\mathcal N'(u_j)-\mathcal N'(u_{j-1}))\|_{L^M(\rn)}.
\end{align}
The same steps as in \eqref{2024-102} enable one to deduce that
\begin{align}
    \label{2024-103}
\|\mathcal N'(u_j)\nabla(u_j-u_{j-1}) \|_{L^M(\rn)}
\leq \frac{1}{2 c_3}\|u_j-u_{j-1}\|_{W^{1,M_{\frac{n}{\ell}}}(\rn)}
\end{align}
for sufficiently small $\varepsilon$, where $c_3$ denotes the constant from~\eqref{2024-100}.
%
\\
It remains to estimate the second addend on the right-hand side of~\eqref{E:lm2}. 
Thanks to ~\eqref{2024-92}, applied with $v=u_j-u_{j-1}$, and~\eqref{E:decay-mn2} one has that
\begin{align}
    \label{2024-105}
\|\phi(\theta|u_j-u_{j-1}|)\|_{L^{Q_1}(\rn)} \leq   c 2^{-j\gamma}
\end{align}
 for some constant $c$.
From \eqref{eq:N3}, \eqref{2024-92} and \eqref{2024-105} we infer, analogously to \eqref{2024-85},
that
\begin{align}\label{2024-106}
\|\nabla u_{j-1} (\mathcal N'(u_j)-\mathcal N'(u_{j-1}))\|_{L^M(\rn)} \leq c\varepsilon 2^{-j \gamma}.
\end{align}
Combining equations \eqref{2024-100}, \eqref{2024-101}, \eqref{2024-103} and \eqref{2024-106} ensures that 
$$
\|u_{j+1}-u_j\|_{W^{1,M_{\frac{n}{\ell}}}(\rn)}
\leq \frac{1}{2}\|u_{j}-u_{j-1}\|_{W^{1,M_{\frac{n}{\ell}}}(\rn)}+c2^{-j\gamma}
$$
for some constant $c$.
Iterating this inequality we obtain, as in
\eqref{2024-107}:
\begin{align}
    \label{2024-108}
    \|u_{j+1}-u_j\|_{W^{1,M_{\frac{n}{\ell}}}(\rn)}
\leq  c(2^{-j}+ 2^{-j\gamma})
\end{align}
for some constant $c$ and for $j \in \N$.
%
%
Since $\gamma>0$, it follows that $\{u_j\}$ is a Cauchy sequence in $W^{1,M_{\frac{n}{\ell}}}(\rn)$, whose limit $u \in W^{1,M_{\frac{n}{\ell}}}(\rn)$. Moreover, from \eqref{2024-101}, \eqref{E:lm2}, \eqref{2024-103}, \eqref{2024-106}, and \eqref{2024-108} one also deduces that
\begin{align}
    \label{2024-109}
    \|\mathcal N(u_{j+1})-\mathcal N(u_j)\|_{W^{1,M}(\rn)}
\leq  c(2^{-j}+ 2^{-j\gamma}), 
\end{align}
 for some constant $c$ and for $j \in \N$.
Hence,
 $\{\mathcal N(u_j)\}$ is a Cauchy sequence in $W^{1,M}(\rn)$, whose limit $\mathcal N(u)$
 fulfills \eqref{2024-270}.
\end{proof}

\bigskip{}{}

 \par\noindent {\bf Data availability statement.} Data sharing not applicable to this article as no datasets were generated or analysed during the current study.

\section*{Compliance with Ethical Standards}\label{conflicts}

\smallskip
\par\noindent
{\bf Funding}. This research was partly funded by:
\\ (i) GNAMPA   of the Italian INdAM - National Institute of High Mathematics (grant number not available)  (A. Cianchi);
\\ (ii) Research Project   of the Italian Ministry of Education, University and
Research (MIUR) Prin 2017 ``Direct and inverse problems for partial differential equations: theoretical aspects and applications'',
grant number 201758MTR2 (A. Cianchi);
\\ (iii) Research Project   of the Italian Ministry of Education, University and
Research (MIUR) Prin 2022 ``Partial differential equations and related geometric-functional inequalities'',
grant number 20229M52AS, cofunded by PNRR (A. Cianchi);
\\  (iv) Grant no.\ 23-04720S of the Czech Science
Foundation  (L. Slav\'ikov\'a);
\\ (v) Primus research programme PRIMUS/21/SCI/002 of Charles University (L. Slav\'ikov\'a);
\\ (vi) Charles University Research Centre program No.\ UNCE/24/SCI/005 (L. Slav\'ikov\'a).

\bigskip
\par\noindent
{\bf Conflict of Interest}. The authors declare that they have no conflict of interest.


\begin{thebibliography}{99}

\bibitem{ADN} S. Agmon, A. Douglis \& L. Nirenberg,   Estimates near the boundary for solutions of elliptic partial differential equations satisfying general boundary conditions I, {\it Comm. Pure Appl. Math.} \textbf{12} (1959), 623--727.

\bibitem{ACPS} A. Alberico, A. Cianchi, L. Pick \& L. Slav\'{i}kov\'{a},  Sharp Sobolev type embeddings on the entire Euclidean space, {\it Comm. Pure Appl. Anal.} \textbf{17} (2018), 2011--2037.

\bibitem{ACPS1} A. Alberico, A. Cianchi, L. Pick \& L. Slav\'{i}kov\'{a}, Fractional Orlicz-Sobolev embeddings, {\it J. Math. Pures Appl.} \textbf{149} (2021), 216--253.



\bibitem{Arm}  D. H. Armitage,   The Neumann problem for a function harmonic in $\rn\times (0,\infty)$, {\it Arch. Rational Mech. Anal.} \textbf{63} (1976), 89--105.


\bibitem{CLYZ} L. Chen, G. Lu, Q. Yang \& M. Zhu,   Sharp critical and subcritical trace Trudinger-Moser and Adams inequalities on the upper half spaces, {\it J. Geom. Anal.}  \textbf{32}, (2022).

\bibitem{Ch} P. Cherrier,   Probl\`{e}mes de Neumann non lin\'{e}aires sur les vari\'{e}t\'{e}s riemanniennes, {\it J. Funct. Anal.} \textbf{57} (1984), 154--206.

\bibitem{Cia_IUMJ} A. Cianchi,  A sharp embedding theorem for Orlicz-Sobolev spaces,
{\it Indiana Univ. Math. J.} \textbf{45} (1996), 35--69.


\bibitem{Cia_CPDE} A. Cianchi,   Boundedness of solutions to variational problems under general growth conditions,
{\it Comm. Part. Diff. Equat.} \textbf{22} (1997), 1629--1646.

\bibitem{A2} A. Cianchi,  An optimal interpolation theorem of Marcinkiewicz type in Orlicz spaces,
{\it J. Funct. Anal.} \textbf{153} (1998), 357--381.

\bibitem{A1} A. Cianchi,   Strong and weak type inequalities for some classical operators in Orlicz spaces, {\it J. London Math. Soc.} \textbf{60} (1999), 187--202.

\bibitem{cianchi-ibero} A. Cianchi, Optimal Orlicz-Sobolev embeddings, {\it  Rev. Mat. Iberoamericana
} \textbf{20} (2004), 427--474.


\bibitem{Cia_Forum} A. Cianchi, Higher-order Sobolev and Poincar\'e inequalities in Orlicz spaces, {\it Forum Math.} \textbf{18} (2006), 745--767.



\bibitem{DL} M.F. de Almeida \& L.S.M. Lima,   Nonlinear boundary problem for harmonic functions in higher dimensional Euclidean, {\it preprint}, arXiv:1807.04122v4.

\bibitem{DM} S. Deng \& M. Musso,   New solutions for critical Neumann problems in $\mathbb{R}^2$, {\it Adv. Nonlinear Anal.} \textbf{8} (2019), 615--644.

\bibitem{FMM} L. C. F. Ferreira, E. S. Medeiros \& M. Montenegro, On the Laplace equation with a supercritical nonlinear Robin boundary condition in the half-space, {\it Calc. Var. Part. Diff. Equ.} \textbf{47} (2013) 667--682.

\bibitem{HWY} F. Hang, X. Wang \& X. Yan,  Sharp integral inequalities for harmonic functions, {\it Comm. Pure Appl. Math.} \textbf{61} (2008), 54--95.

\bibitem{Hu} B. Hu,   Nonexistence of a positive solution of the Laplace equation with a nonlinear boundary condition, {\it Diff. Integral Equ.} \textbf{7} (1994), 301--313.

\bibitem{KoKr} V.Kokalishvili \& M.Krbec, {\it Weighted inequalities in Orlicz and Lorentz spaces}, World Scientific, 2008.

\bibitem{MV} K. Medville \& M. Vogelius,   Existence and blow up of solutions to certain classes of two-dimensional nonlinear Neumann problems,  {\it Ann. I. H. Poincar\'{e}, Anal. Nonlin.} \textbf{23} (2006), 499--538.

\bibitem{N} C. B. Ndiaye,   Constant T-curvature conformal metrics on $4$-manifolds with boundary, 
{\it Pacific J. Math.} \textbf{240} (2009),  151--184.

\bibitem{oneil} R. O'Neil,   Fractional integration in Orlicz spaces, {\it
Trans. Amer. Math. Soc.}  \textbf{115} (1965),  300--328. 






\bibitem{QR} P. Quittner \& W. Reichel,   Very weak solutions to elliptic equations with nonlinear Neumann boundary conditions,  {\it Calc. Var. Partial Diff.Equ.} \textbf{32} (2008), 429--452.

\bibitem{QSbook}
P. Quittner \&  Ph. Souplet,   {\it Superlinear parabolic problems. Blow-up, global existence and steady states}, Second Edition, Birkh\"auser Advanced Texts, 2019.


\bibitem{SX} L. Sun \& J. Xiong,  Classification theorems for solutions of higher order boundary conformally invariant problems, I, {\it J.  Funct. Anal.} \textbf{271} (2016), 3727--3764.

\bibitem{M} V. Musil,   Classical operators of harmonic analysis in Orlicz spaces, {\it Doctoral Thesis}, 2018.

\bibitem{VX} M. Vogelius \&  J-M. Xu,   A nonlinear elliptic boundary value problem related to Corrosion modeling, {\it Quarterly Appl. Math.} \textbf{56}(1998), 479--505.


\bibitem{Di} G. D. Yomgne,  Nonlinear Biharmonic equation in half-space with rough Neumann boundary data and potentials,  {\it Nonlinear Anal. } \textbf{215} (2022), 112623.

\bibitem{Di1} G. D. Yomgne,   On a nonlinear Laplace equation related to the boundary Yamabe problem in the upper-half space,  {\it Commun. Pure Appl. Anal.} 21 (2022), 517--539.

\bibitem{Zh} L. Zhang,  Classification of conformal metrics on $\mathbb{R}^2_+$ with constant Gauss curvature and geodesic curvature on the boundary under various integral finiteness assumptions, {\it Calc. Var. Part. Diff. Equ.} \textbf{16} (2003), 405--430.

\end{thebibliography}
\end{document}